\documentclass[reqno, 12pt]{amsart}
\pdfoutput=1

\def\ssign{\textsection\nobreak\hspace{1pt plus 0.3pt}}
\makeatletter
\let\origsection=\section 
\def\mysection{\@mystartsection{section}{1}\z@{.7\linespacing\@plus\linespacing}{.5\linespacing}{\normalfont\scshape\centering\ssign}}
\def\section{\@ifstar{\origsection*}{\mysection}}
\def\appendix{\par\c@section\z@ \c@subsection\z@
	\let\sectionname\appendixname
	\let\section=\origsection
	\def\thesection{\@Alph\c@section}} 
\def\@mystartsection#1#2#3#4#5#6{\if@noskipsec \leavevmode \fi
	\par \@tempskipa #4\relax
	\@afterindenttrue
	\ifdim \@tempskipa <\z@ \@tempskipa -\@tempskipa \@afterindentfalse\fi
	\if@nobreak \everypar{}\else
	\addpenalty\@secpenalty\addvspace\@tempskipa\fi
	\@dblarg{\@mysect{#1}{#2}{#3}{#4}{#5}{#6}}}
\def\@mysect#1#2#3#4#5#6[#7]#8{\edef\@toclevel{\ifnum#2=\@m 0\else\number#2\fi}\ifnum #2>\c@secnumdepth \let\@secnumber\@empty
	\else \@xp\let\@xp\@secnumber\csname the#1\endcsname\fi
	\@tempskipa #5\relax
	\ifnum #2>\c@secnumdepth
	\let\@svsec\@empty
	\else
	\refstepcounter{#1}\edef\@secnumpunct{\ifdim\@tempskipa>\z@ \@ifnotempty{#8}{\@nx\enspace}\else
		\@ifempty{#8}{.}{\@nx\enspace}\fi
	}\@ifempty{#8}{\ifnum #2=\tw@ \def\@secnumfont{\bfseries}\fi}{}\protected@edef\@svsec{\ifnum#2<\@m
		\@ifundefined{#1name}{}{\ignorespaces\csname #1name\endcsname\space
		}\fi
		\@seccntformat{#1}}\fi
	\ifdim \@tempskipa>\z@ \begingroup #6\relax
	\@hangfrom{\hskip #3\relax\@svsec}{\interlinepenalty\@M #8\par}\endgroup
	\ifnum#2>\@m \else \@tocwrite{#1}{#8}\fi
	\else
	\def\@svsechd{#6\hskip #3\@svsec
		\@ifnotempty{#8}{\ignorespaces#8\unskip
			\@addpunct.}\ifnum#2>\@m \else \@tocwrite{#1}{#8}\fi
	}\fi
	\global\@nobreaktrue
	\@xsect{#5}}
\makeatother

\usepackage{amsmath,amssymb,amsthm, nccmath}
\usepackage{mathrsfs}
\usepackage{mathabx}\changenotsign
\usepackage{dsfont}
\usepackage{bbm}
\usepackage{todonotes}

\usepackage{xcolor}
\usepackage[backref=section]{hyperref}
\usepackage[ocgcolorlinks]{ocgx2}
\hypersetup{
	colorlinks=true,
	linkcolor={red!60!black},
	citecolor={green!60!black},
	urlcolor={blue!60!black},
}


\usepackage[open,openlevel=2,atend]{bookmark}

\usepackage[abbrev,msc-links,backrefs]{amsrefs}
\usepackage{doi}

\renewcommand{\PrintDOI}[1]{\doi{#1}}

\usepackage[T1]{fontenc}
\usepackage{lmodern}
\usepackage[babel]{microtype}
\usepackage[english]{babel}

\linespread{1.3}
\usepackage{geometry}
\geometry{left=27.5mm,right=27.5mm, top=25mm, bottom=25mm}

\numberwithin{equation}{section}
\numberwithin{figure}{section}

\usepackage{enumitem}
\def\rmlabel{\upshape({\itshape \roman*\,})}

\def\nlabel{\upshape({\itshape \arabic*\,})}

\let\polishlcross=\l
\def\l{\ifmmode\ell\else\polishlcross\fi}

\let\emptyset=\varnothing
\let\setminus=\smallsetminus

\makeatletter
\def\moverlay{\mathpalette\mov@rlay}
\def\mov@rlay#1#2{\leavevmode\vtop{   \baselineskip\z@skip \lineskiplimit-\maxdimen
		\ialign{\hfil$\m@th#1##$\hfil\cr#2\crcr}}}
\newcommand{\charfusion}[3][\mathord]{
	#1{\ifx#1\mathop\vphantom{#2}\fi
		\mathpalette\mov@rlay{#2\cr#3}
	}
	\ifx#1\mathop\expandafter\displaylimits\fi}
\makeatother

\newcommand{\dcup}{\charfusion[\mathbin]{\cup}{\cdot}}
\newcommand{\bigdcup}{\charfusion[\mathop]{\bigcup}{\cdot}}

\DeclareFontFamily{U}  {MnSymbolC}{}
\DeclareSymbolFont{MnSyC}         {U}  {MnSymbolC}{m}{n}
\DeclareFontShape{U}{MnSymbolC}{m}{n}{
	<-6>  MnSymbolC5
	<6-7>  MnSymbolC6
	<7-8>  MnSymbolC7
	<8-9>  MnSymbolC8
	<9-10> MnSymbolC9
	<10-12> MnSymbolC10
	<12->   MnSymbolC12}{}
\DeclareMathSymbol{\powerset}{\mathord}{MnSyC}{180}

\usepackage{tikz}
\usetikzlibrary{calc,decorations.pathmorphing}
\usetikzlibrary{arrows,decorations.pathreplacing}
\pgfdeclarelayer{background}
\pgfdeclarelayer{foreground}
\pgfdeclarelayer{front}
\pgfsetlayers{background,main,foreground,front}

\usepackage{multicol}
\usepackage{subcaption}

\newcommand{\pedge}[9]{
	
	\ifx\relax#6\relax
	\def\qoffs{0pt}
	\else
	\def\qoffs{#6}
	\fi
	
	\def\phedge{
		($#1+#5!\qoffs!-90:#2-#5$) -- 
		($#2+#1!\qoffs!-90:#3-#1$) -- 
		($#3+#2!\qoffs!-90:#4-#2$) -- 
		($#4+#3!\qoffs!-90:#5-#3$) -- 
		($#5+#4!\qoffs!-90:#1-#4$) -- cycle}

	\coordinate (12) at ($#1!\qoffs!90:#2$);
	\coordinate (15) at ($#1!\qoffs!-90:#5$);
	\coordinate (23) at ($#2!\qoffs!90:#3$);
	\coordinate (21) at ($#2!\qoffs!-90:#1$);
	\coordinate (34) at ($#3!\qoffs!90:#4$);
	\coordinate (32) at ($#3!\qoffs!-90:#2$);
	\coordinate (45) at ($#4!\qoffs!90:#5$);
	\coordinate (43) at ($#4!\qoffs!-90:#3$);
	\coordinate (51) at ($#5!\qoffs!90:#1$);
	\coordinate (54) at ($#5!\qoffs!-90:#4$);

	\def\nphedge{
		(15) let \p1=($(15)-#1$), \p2=($(12)-#1$) in 
		arc[start angle={atan2(\y1,\x1)}, delta angle={atan2(\y2,\x2)-atan2(\y1,\x1)-360*(atan2(\y2,\x2)-atan2(\y1,\x1)>0)}, x radius=\qoffs, y radius=\qoffs] --
		(21) let \p1=($(21)-#2$), \p2=($(23)-#2$) in 
		arc[start angle={atan2(\y1,\x1)}, delta angle={atan2(\y2,\x2)-atan2(\y1,\x1)-360*(atan2(\y2,\x2)-atan2(\y1,\x1)>0)}, x radius=\qoffs, y radius=\qoffs] --
		(32) let \p1=($(32)-#3$), \p2=($(34)-#3$) in 
		arc[start angle={atan2(\y1,\x1)}, delta angle={atan2(\y2,\x2)-atan2(\y1,\x1)-360*(atan2(\y2,\x2)-atan2(\y1,\x1)>0)}, x radius=\qoffs, y radius=\qoffs] --
		(43) let \p1=($(43)-#4$), \p2=($(45)-#4$) in 
		arc[start angle={atan2(\y1,\x1)}, delta angle={atan2(\y2,\x2)-atan2(\y1,\x1)-360*(atan2(\y2,\x2)-atan2(\y1,\x1)>0)}, x radius=\qoffs, y radius=\qoffs] --
		(54) let \p1=($(54)-#5$), \p2=($(51)-#5$) in 
		arc[start angle={atan2(\y1,\x1)}, delta angle={atan2(\y2,\x2)-atan2(\y1,\x1)-360*(atan2(\y2,\x2)-atan2(\y1,\x1)>0)}, x radius=\qoffs, y radius=\qoffs] --
		cycle}
	
	\ifx\relax#7\relax
	\def\plwidth{1pt}
	\else
	\def\plwidth{#7}
	\fi
	
	\ifx\relax#9\relax
	\fill \nphedge;
	\else
	\fill[#9]\nphedge;
	\fi
	
	\ifx\relax#8\relax
	\draw[line width=\plwidth,rounded corners=\qoffs]\nphedge;
	\else
	\draw[line width=\plwidth,#8]\nphedge;
	\fi
}

\newcommand{\qedge}[7]{
	
	\ifx\relax#4\relax
	\def\qoffs{0pt}
	\else
	\def\qoffs{#4}
	\fi
	
	\def\qhedge{
		($#1+#3!\qoffs!-90:#2-#3$) --
		($#2+#1!\qoffs!-90:#3-#1$) --
		($#3+#2!\qoffs!-90:#1-#2$) -- cycle}

	\coordinate (12) at ($#1!\qoffs!90:#2$);
	\coordinate (13) at ($#1!\qoffs!-90:#3$);
	\coordinate (23) at ($#2!\qoffs!90:#3$);
	\coordinate (21) at ($#2!\qoffs!-90:#1$);
	\coordinate (31) at ($#3!\qoffs!90:#1$);
	\coordinate (32) at ($#3!\qoffs!-90:#2$);
	
	\def\nqhedge{
		(13) let \p1=($(13)-#1$), \p2=($(12)-#1$) in
		arc[start angle={atan2(\y1,\x1)}, delta angle={atan2(\y2,\x2)-atan2(\y1,\x1)-360*(atan2(\y2,\x2)-atan2(\y1,\x1)>0)}, x radius=\qoffs, y radius=\qoffs] --
		(21) let \p1=($(21)-#2$), \p2=($(23)-#2$) in
		arc[start angle={atan2(\y1,\x1)}, delta angle={atan2(\y2,\x2)-atan2(\y1,\x1)-360*(atan2(\y2,\x2)-atan2(\y1,\x1)>0)}, x radius=\qoffs, y radius=\qoffs] --
		(32) let \p1=($(32)-#3$), \p2=($(31)-#3$) in
		arc[start angle={atan2(\y1,\x1)}, delta angle={atan2(\y2,\x2)-atan2(\y1,\x1)-360*(atan2(\y2,\x2)-atan2(\y1,\x1)>0)}, x radius=\qoffs, y radius=\qoffs] --
		cycle}
	
	\ifx\relax#5\relax
	\def\qlwidth{1pt}
	\else
	\def\qlwidth{#5}
	\fi
	
	\ifx\relax#7\relax
	\fill \nqhedge;
	\else
	\fill[#7]\nqhedge;
	\fi
	
	\ifx\relax#6\relax
	\draw[line width=\qlwidth,rounded corners=\qoffs]\nqhedge;
	\else
	\draw[line width=\qlwidth,#6]\nqhedge;
	\fi
}

\newcommand{\redge}[8]{
	
	\ifx\relax#5\relax
	\def\qoffs{0pt}
	\else
	\def\qoffs{#5}
	\fi
	
	\def\rhedge{
		($#1+#4!\qoffs!-90:#2-#4$) -- 
		($#2+#1!\qoffs!-90:#3-#1$) -- 
		($#3+#2!\qoffs!-90:#4-#2$) -- 
		($#4+#3!\qoffs!-90:#1-#3$) -- cycle}

	\coordinate (12) at ($#1!\qoffs!90:#2$);
	\coordinate (14) at ($#1!\qoffs!-90:#4$);
	\coordinate (23) at ($#2!\qoffs!90:#3$);
	\coordinate (21) at ($#2!\qoffs!-90:#1$);
	\coordinate (34) at ($#3!\qoffs!90:#4$);
	\coordinate (32) at ($#3!\qoffs!-90:#2$);
	\coordinate (41) at ($#4!\qoffs!90:#1$);
	\coordinate (43) at ($#4!\qoffs!-90:#3$);
	
	\def\nrhedge{
		(14) let \p1=($(14)-#1$), \p2=($(12)-#1$) in 
		arc[start angle={atan2(\y1,\x1)}, delta angle={atan2(\y2,\x2)-atan2(\y1,\x1)-360*(atan2(\y2,\x2)-atan2(\y1,\x1)>0)}, x radius=\qoffs, y radius=\qoffs] --
		(21) let \p1=($(21)-#2$), \p2=($(23)-#2$) in 
		arc[start angle={atan2(\y1,\x1)}, delta angle={atan2(\y2,\x2)-atan2(\y1,\x1)-360*(atan2(\y2,\x2)-atan2(\y1,\x1)>0)}, x radius=\qoffs, y radius=\qoffs] --
		(32) let \p1=($(32)-#3$), \p2=($(34)-#3$) in 
		arc[start angle={atan2(\y1,\x1)}, delta angle={atan2(\y2,\x2)-atan2(\y1,\x1)-360*(atan2(\y2,\x2)-atan2(\y1,\x1)>0)}, x radius=\qoffs, y radius=\qoffs] --
		(43) let \p1=($(43)-#4$), \p2=($(41)-#4$) in 
		arc[start angle={atan2(\y1,\x1)}, delta angle={atan2(\y2,\x2)-atan2(\y1,\x1)-360*(atan2(\y2,\x2)-atan2(\y1,\x1)>0)}, x radius=\qoffs, y radius=\qoffs] --
		cycle}
	
	\ifx\relax#6\relax
	\def\rlwidth{1pt}
	\else
	\def\rlwidth{#6}
	\fi
	
	\ifx\relax#8\relax
	\fill \nrhedge;
	\else
	\fill[#8]\nrhedge;
	\fi
	
	\ifx\relax#7\relax
	\draw[line width=\rlwidth,rounded corners=\qoffs]\nrhedge;
	\else
	\draw[line width=\rlwidth,#7]\nrhedge;
	\fi
}

\let\phi=\varphi
\let\epsilon=\varepsilon
\let\eps=\epsilon
\let\rho=\varrho
\let\theta=\vartheta

\def\EE{{\mathds E}}
\def\NN{{\mathds N}}

\def\PP{{\mathds P}}

\newcommand{\Var}{\text{Var}}
\newcommand{\bbS}{\mathbb{S}}

\newcommand{\bbE}{\mathbb{E}}
\newcommand{\bx}{\mathbf{x}}
\newcommand{\bX}{\mathbf{X}}
\newcommand{\by}{\mathbf{y}}
\newcommand{\bz}{\mathbf{z}}

\newcommand{\cA}{\mathcal{A}}
\newcommand{\cB}{\mathcal{B}}

\newcommand{\cE}{\mathcal{E}}
\newcommand{\cF}{\mathcal{F}}
\newcommand{\cG}{\mathcal{G}}
\newcommand{\cH}{\mathcal{H}}

\newcommand{\cP}{\mathcal{P}}

\newcommand{\cR}{\mathcal{R}}
\newcommand{\cS}{\mathcal{S}}

\newcommand{\cV}{\mathcal{V}}

\newcommand{\fv}{\mathfrak v}

\newcommand{\ccA}{\mathscr{A}}

\DeclareMathOperator{\id}{id}
\DeclareMathOperator{\rev}{rev}
\DeclareMathOperator{\Ex}{ex}
\DeclareMathOperator{\pal}{pal}

\newcommand{\fin}{\text{fin}}

\newcommand{\ceil}[1]{\left\lceil #1 \right\rceil}

\newtheoremstyle{note}  {4pt}  {4pt}  {\sl}  {}  {\bfseries}  {.}  {.5em}          {}
\newtheoremstyle{introthms}  {3pt}  {3pt}  {\itshape}  {}  {\bfseries}  {.}  {.5em}          {\thmnote{#3}}
\newtheoremstyle{remark}  {2pt}  {2pt}  {\rm}  {}  {\bfseries}  {.}  {.3em}          {}

\theoremstyle{plain}
\newtheorem{theorem}{Theorem}[section]
\newtheorem{lemma}[theorem]{Lemma}
\newtheorem{prop}[theorem]{Proposition}

\newtheorem{cor}[theorem]{Corollary}
\newtheorem{fact}[theorem]{Fact}
\newtheorem{claim}[theorem]{Claim}
\newtheorem{subclaim}[theorem]{Subclaim}
\newtheorem{obs}[theorem]{Observation}

\theoremstyle{note}
\newtheorem{dfn}[theorem]{Definition}

\theoremstyle{remark}
\newtheorem{remark}[theorem]{Remark}

\usepackage{lineno}
\newcommand*\patchAmsMathEnvironmentForLineno[1]{
	\expandafter\let\csname old#1\expandafter\endcsname\csname #1\endcsname
	\expandafter\let\csname oldend#1\expandafter\endcsname\csname end#1\endcsname
	\renewenvironment{#1}
	{\linenomath\csname old#1\endcsname}
	{\csname oldend#1\endcsname\endlinenomath}}
\newcommand*\patchBothAmsMathEnvironmentsForLineno[1]{
	\patchAmsMathEnvironmentForLineno{#1}
	\patchAmsMathEnvironmentForLineno{#1*}}
\AtBeginDocument{
	\patchBothAmsMathEnvironmentsForLineno{equation}
	\patchBothAmsMathEnvironmentsForLineno{align}
	\patchBothAmsMathEnvironmentsForLineno{flalign}
	\patchBothAmsMathEnvironmentsForLineno{alignat}
	\patchBothAmsMathEnvironmentsForLineno{gather}
	\patchBothAmsMathEnvironmentsForLineno{multline}
}

\usepackage{scalerel}

\newsavebox\vdegbox
\savebox\vdegbox{\tikz{
		\draw[black,fill=black] (90:1) circle (.35);
		\draw[black,line width=0.10cm] (210:1) circle (.30);
		\draw[black,line width=0.10cm] (330:1) circle (.30);
		\draw[opacity=0] (0:1.2) circle (0.1);
}}


\newsavebox\pdegbox
\savebox\pdegbox{\tikz{
		\draw[black,line width=0.10cm] (90:1) circle (.30);
		\draw[black,fill=black] (210:1) circle (.35);
		\draw[black,fill=black] (330:1) circle (.35);
		\draw[black,line width=0.28cm ] (210:1) -- (330:1);
		\draw[opacity=0] (0:1.2) circle (0.1);
}}

\newsavebox\vvvbox
\savebox\vvvbox{\tikz{
		\draw[black,fill=black] (90:1) circle (.35);
		\draw[black,fill=black] (210:1) circle (.35);
		\draw[black,fill=black] (330:1) circle (.35);
		\draw[opacity=0] (0:1.2) circle (0.1);
}}
\newcommand{\vvv}{\mathord{\scaleobj{1.2}{\scalerel*{\usebox{\vvvbox}}{x}}}}
\newcommand{\pivvv}{\pi_{\vvv}}

\newsavebox\evbox
\savebox\evbox{\tikz{
		\draw[black,fill=black] (90:1) circle (.35);
		\draw[black,fill=black] (210:1) circle (.35);
		\draw[black,fill=black] (330:1) circle (.35);
		\draw[black,line width=0.28cm ] (210:1) -- (330:1);
		\draw[opacity=0] (0:1.2) circle (0.1);
}}

\newsavebox\eebox
\savebox\eebox{\tikz{
		\draw[black,fill=black] (90:1) circle (.35);
		\draw[black,fill=black] (210:1) circle (.35);
		\draw[black,fill=black] (330:1) circle (.35);
		\draw[black,line width=0.28cm ] (90:1) -- (330:1);
		\draw[black,line width=0.28cm ] (90:1) -- (210:1);
		\draw[opacity=0] (0:1.2) circle (0.1);
}}

\newsavebox\eeebox
\savebox\eeebox{\tikz{
		\draw[black,fill=black] (90:1) circle (.35);
		\draw[black,fill=black] (210:1) circle (.35);
		\draw[black,fill=black] (330:1) circle (.35);
		\draw[black,line width=0.28cm ] (90:1) -- (330:1);
		\draw[black,line width=0.28cm ] (90:1) -- (210:1);
		\draw[black,line width=0.28cm ] (210:1) -- (330:1);
		\draw[opacity=0] (0:1.2) circle (0.1);
}}

\newcommand{\pired}{\pi^{\text{rd}}_{\vvv}}
\newcommand{\expal}{\text{ex}_{\text{pal}}}
\newcommand{\EXpal}{\text{EX}_{\text{pal}}}

\makeatletter
\newcommand{\overrighharpoonup}[1]{\ThisStyle{%
		\vbox {\m@th\ialign{##\crcr
				\rightharpoonupfill \crcr
				\noalign{\kern-\p@\nointerlineskip}
				$\hfil\SavedStyle#1\hfil$\crcr}}}}

\def\rightharpoonupfill{%
	$\SavedStyle\m@th\mkern+0.8mu\cleaders\hbox{$\shortbar\mkern-4mu$}\hfill\rightharpoonuptip\mkern+0.8mu$}

\def\rightharpoonuptip{%
	\raisebox{\z@}[2pt][1pt]{\scalebox{0.55}{$\SavedStyle\rightharpoonup$}}}

\def\shortbar{%
	\smash{\scalebox{0.55}{$\SavedStyle\relbar$}}}
\makeatother

\makeatletter
\newcommand{\overlefharpoonup}[1]{\ThisStyle{%
		\vbox {\m@th\ialign{##\crcr
				\leftharpoonupfill \crcr
				\noalign{\kern-\p@\nointerlineskip}
				$\hfil\SavedStyle#1\hfil$\crcr}}}}

\def\leftharpoonupfill{%
	$\SavedStyle\m@th\mkern+0.8mu\cleaders\hbox{$\shortbar\mkern-4mu$}\hfill\leftharpoonuptip\mkern+0.8mu$}

\def\leftharpoonuptip{%
	\raisebox{\z@}[2pt][1pt]{\scalebox{0.55}{$\SavedStyle\leftharpoonup$}}}

\makeatletter
\newsavebox\myboxA
\newsavebox\myboxB
\newlength\mylenA

\newcommand*\xoverline[2][0.75]{%
	\sbox{\myboxA}{$\m@th#2$}%
	\setbox\myboxB\null
	\ht\myboxB=\ht\myboxA%
	\dp\myboxB=\dp\myboxA%
	\wd\myboxB=#1\wd\myboxA
	\sbox\myboxB{$\m@th\overline{\copy\myboxB}$}
	\setlength\mylenA{\the\wd\myboxA}
	\addtolength\mylenA{-\the\wd\myboxB}%
	\ifdim\wd\myboxB<\wd\myboxA%
	\rlap{\hskip 0.5\mylenA\usebox\myboxB}{\usebox\myboxA}%
	\else
	\hskip -0.5\mylenA\rlap{\usebox\myboxA}{\hskip 0.5\mylenA\usebox\myboxB}%
	\fi}
\makeatother



\DeclareSymbolFont{symbolsC}{U}{txsyc}{m}{n}
\SetSymbolFont{symbolsC}{bold}{U}{txsyc}{bx}{n}
\DeclareFontSubstitution{U}{txsyc}{m}{n}
\DeclareMathSymbol{\strictif}{\mathrel}{symbolsC}{74}
\DeclareMathSymbol{\strictfi}{\mathrel}{symbolsC}{75}

\title{On possible uniform Tur\'an densities}

 \author[D.~King]{Dylan King}
 \address{Mathematics Department, California Institute of Technology, Pasadena, USA}
 \email{dking@caltech.edu}

 \author[S.~Piga]{Sim\'{o}n Piga}
 \address{Fachbereich Mathematik, Universit\"{a}t Hamburg, Hamburg, Germany}
 \email{simon.piga@uni-hamburg.de}

 \author[M.~Sales]{Marcelo Sales}
 \address{Mathematics Department, University of California Irvine, Irvine, USA}
 \email{mtsales@uci.edu}
 \thanks{The third author is supported by US Air force grant FA9550-23-1-0298.}

 \author[B.~Sch\"ulke]{Bjarne Sch\"ulke}
 \address{Extremal Combinatorics and Probability Group, Institute for Basic Science, Daejeon, South Korea}
 \email{schuelke@ibs.re.kr}
 \thanks{The fourth author is supported by the Young Scientist Fellowship IBS-R029-Y7.}

\thanks{The work on this article was supported by the SQuaRE ``Variants of the hypergraph Tur\'an problem'' at the American Institute of Mathematics.}

\begin{document}
	\maketitle
	\begin{abstract}
  Given a family of $3$-graphs $\cF$, the \emph{uniform Tur\'{a}n density} $\pi_{\vvv}(\cF)$ is defined as the infimum $d\in[0,1]$ for which any sufficiently large uniformly $d$-dense $3$-graph — that is, a $3$-graph which has edge-density at least $d$ on all linearly sized subsets — contains a copy of some $F \in \cF$.
  Let $\Pi_{\vvv,\fin}$ denote the set of all possible uniform Tur\'{a}n densities of finite families.
  Erd\H{o}s, Hajnal, and R\"{o}dl introduced a family of constructions for lower bounds on uniform Turán densities called palette constructions.
  We show that $\Pi_{\vvv,\fin}$ contains every $d$ that is obtained as the uniform density of an optimized palette construction.
  A corollary of this is that $\Pi_{\vvv,\fin}$ contains the set of Lagrangians of $3$-graphs and includes irrational numbers.
  Our work complements a recent result of Lamaison, which states that every value in $\Pi_{\vvv,\fin}$ can be approximated by uniform densities of palette constructions.
	\end{abstract}

	\section{Introduction}\label{sec:intro}
	
	For~$n\in\mathds{N}$ and a family~$\cF$ of~$k$-uniform hypergraphs (or~$k$-graphs), let the \emph{extremal number}~$\Ex(n,\cF)$ be the maximum number of edges in a~$k$-graph~$G$ on~$n$ vertices that does not contain a copy of any~$F\in \cF$.
    Such a~$k$-graph~$G$ is called~\emph{$\cF$-free}.
    It is well known that the quantity~$\Ex(n,\cF)/\binom{n}{k}$ is decreasing \cite{KNS:64}, and therefore one may define the \emph{Tur\'{a}n density} of a family~$\cF$ as
	\begin{align*}
		\pi(\cF):=\lim_{n \to \infty} \frac{\Ex(n,\cF)}{\binom{n}{k}}.
	\end{align*}
	When the family~$\cF=\{F\}$ is a single~$k$-graph, we usually denote~$\pi(\cF)$ by~$\pi(F)$.
    Let~$\Pi^{(k)}_{\infty}$ be the set of all possible Tur\'{a}n densities of families of~$k$-graphs and let~$\Pi^{(k)}_{\fin}$ be the set of Tur\'{a}n densities of finite families~$\cF$.
	
	The study of Tur\'an densities was initiated by Tur\'{a}n \cite{T:41}, who determined $\Ex(n,F)$ when~$F$ is the complete ($2$-)graph.
    Erd\H{o}s, Stone, and Simonovits~\cites{ES:66, ES:46}, generalised this by establishing that
	\begin{align*}
		\pi(F) \geq \frac{\chi(F)-2}{\chi(F)-1}\,,
	\end{align*}
    where~$\chi(F)$ is the chromatic number of~$F$.
	Their proof also gives that
	\begin{align*}
		\Pi^{(2)}_{\infty} = \Pi^{(2)}_{\fin}=\left\{\frac{k}{k+1}:\: k\in \NN_{\geq0}\right\}\,.
	\end{align*}
	
	For higher uniformities, the problem becomes considerably harder and, despite much effort, remains wide open even for~$3$-graphs. Determining the Tur\'{a}n density even for seemingly ``simple''~$3$-graphs is notoriously difficult.
    Perhaps the most famous open problem is finding the Tur\'an density of the complete $3$-graph on four vertices,~$K_4^{(3)}$.
    In fact, even~$\pi(K_4^{(3)-})$, the Tur\'an density of the~$3$-graph with four vertices and three edges, is unknown.
	
	A different line of research investigates properties of the set of Tur\'an densities.
    Disproving a 1000\$ conjecture by Erd\H{o}s, Frankl and R\"odl \cite{FR:84} showed that for~$k \geq 3$ the set $\Pi^{(k)}_{\infty}$ is not well-ordered, i.e., there exists some~$\alpha \in [0,1)$ such that for every~$\epsilon > 0$, the set~$(\alpha, \alpha + \epsilon) \cap \Pi^{(k)}_{\infty}$ is not empty.
    This indicates how much more difficult the hypergraph Tur\'an problem is for~$k$-graphs with~$k\geq3$.
    Recently, Pikhurko \cite{P:12} proved a series of results concerning~$\Pi^{(k)}_{\infty}$ and~$\Pi^{(k)}_{\fin}$.
    In particular, he showed that~$\Pi^{(k)}_{\infty} \neq \Pi^{(k)}_{\fin}$,~$\Pi^{(k)}_{\infty}$ is uncountable, and, using a result by Brown and Simonovits~\cite{BS:84}, that~$\Pi^{(k)}_{\infty} = \overline{\Pi^{(k)}_{\fin}}$.
    However, the full description of the sets $\Pi^{(k)}_{\infty}$ and $\Pi^{(k)}_{\fin}$ remains open.
    For more on the hypergraph Tur\'an problem, we refer to the survey by Keevash \cite{K:11}.
	
	Here we consider a variant of the Tur\'{a}n density suggested by Erd\H{o}s and S\'{o}s \cites{ES:82, E:90}.
    Throughout the rest of the paper, we focus on~$3$-graphs.
    For~$d \in [0,1]$ and~$\eta > 0$, we say that a~$3$-graph~$H$ on~$n$ vertices is~\emph{$(d, \eta)$-dense} if for all~$X \subseteq V(H)$, we have
	\begin{align*}
		e(X) \geq d\binom{|X|}{3} - \eta n^3\,.
	\end{align*}
    The \emph{uniform Tur\'{a}n density~$\pi_{\vvv}$} of a family~$\cF$ of~$3$-graphs is defined as
	\begin{align*}
		\pi_{\vvv}(\cF) = \sup\{d \in [0,1] :\: &\text{for every $\eta > 0$ and $n \in \NN$, there exists}\\
		&\text{an $\cF$-free, $(d, \eta)$-dense $3$-graph $H$ with $|V(H)| \geq n$}\}\,.
	\end{align*}
	In other words,~$\pi_{\vvv}(\cF)$ is the smallest~$d\in[0,1]$ such that there is some~$\eta>0$ such that every sufficiently large~$3$-graph~$H$ on~$n$ vertices that is~$(d+o(1),\eta)$-dense contains a copy of some~$F\in \cF$.
	
	Erd\H{o}s and S\'os specifically asked to determine~$\pi_{\vvv}(K_4^{(3)})$ and~$\pi_{\vvv}(K_4^{(3)-})$.
    Similarly as with the original Tur\'an density, these problems turned out to be very difficult. 
    Only recently, Glebov, Kr\'al', and Volec~\cite{GKV:16} and Reiher, R\"odl, and Schacht~\cite{RRS:18} independently solved the latter, showing that~$\pi_{\vvv}(K_4^{(3)-})=1/4$, which confirmed a conjecture by Erd\H{o}s and S\'os.
    We refer to Reiher's survey~\cite{R:20} for a full description of the landscape of extremal problems in uniformly dense hypergraphs.
	
	Similarly as for the original Tur\'an density, let~$\Pi_{\vvv,\infty}$ be the set of all possible uniform Tur\'an densities of families and~$\Pi_{\vvv,\fin}$ the set of all possible uniform Tur\'an densities of finite families.
    In order to state and discuss the main result of this paper, we need to introduce the concept of Lagrange polynomials, which go back to the work of Motzkin and Straus~\cite{MS:65}.
    Given~$n\in\mathds{N}$ and a subset of ordered triples (which will later be called a ``palette'')~$P \subseteq [n]^{3}$, we define the \emph{Lagrange polynomial of $P$} as 
	\begin{align*}
		\lambda_P(x_1,\ldots,x_n)=\sum_{(i,j,k)\in P}x_ix_jx_k\,,
	\end{align*}
	and the \emph{Lagrangian of $P$}, denoted as~$\Lambda_P$, as the maximum of~$\lambda_P(x_1,\ldots,x_n)$ subject to~$x_1+\ldots+x_n=1$ and~$x_i \in [0,1]$ for all~$i\in[n]$.
    Let~$\Lambda_{\pal}$ be the set of all possible~$\Lambda_P$.
    In~\cite{KSS:24}, it was shown that~$\Lambda_{\pal} \subseteq \Pi_{\vvv,\infty}$ and, as a corollary, that the set~$\Pi_{\vvv,\infty}$ is not well-ordered and contains irrational numbers.
    However, the families constructed in the proof were all infinite, which does not shed light on the possible values of~$\Pi_{\vvv,\fin}$.
    Here we extend the result in~\cite{KSS:24} by showing that~$\Lambda_{\pal}\subseteq \Pi_{\vvv,\fin}$.
	
	\begin{theorem}\label{thm:main}
		For all $\lambda \in \Lambda_{\pal}$, there is a finite family $\cF$ of~$3$-graphs with $\pi_{\vvv}(\cF)=\lambda$.
	\end{theorem}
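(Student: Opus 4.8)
Fix a palette $P\subseteq[n]^3$ with $\Lambda_P=\lambda$. Call a $3$-graph $F$ \emph{$P$-representable} if its vertices can be linearly ordered and its pairs coloured by $[n]$ so that every edge $\{u<v<w\}$ receives a colour pattern $(\chi_{uv},\chi_{uw},\chi_{vw})\in P$; after adjoining one ``inert'' colour to $P$ we may assume that the class of $P$-representable $3$-graphs is closed under taking subgraphs and under blow-ups. The plan is to take $\cF$ to be (a suitable finite refinement of) the family of non-$P$-representable $3$-graphs of bounded order. For the lower bound $\pi_{\vvv}(\cF)\ge\lambda$, let $\bx^\star$ be an optimal weight of $P$ and consider the random $3$-graph on $[N]$ obtained by colouring each pair independently according to $\bx^\star$ and making $\{u<v<w\}$ an edge exactly when $(\chi_{uv},\chi_{uw},\chi_{vw})\in P$: for $N$ large it is, with high probability, $(\lambda,\eta)$-dense for every fixed $\eta>0$ (concentration via bounded differences), while it visibly contains no non-$P$-representable $3$-graph. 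Hence $\pi_{\vvv}(\cF)\ge\lambda$ for every $\cF$ consisting of non-$P$-representable $3$-graphs.

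For the upper bound I would invoke the reduced-hypergraph machinery underlying \cite{KSS:24}: $\pi_{\vvv}(\cF)\le\lambda$ follows once one knows that every reduced hypergraph containing no reduced copy of any $F\in\cF$ has density at most $\lambda$, and a Ramsey-type homogenisation of its (arbitrarily large) index set reduces this to the purely arithmetic statement that every palette $Q$ whose associated random construction is $\cF$-free satisfies $\Lambda_Q\le\lambda$. Thus the whole theorem reduces to producing a \emph{finite} family $\cF$ of non-$P$-representable $3$-graphs such that
\[
  \Lambda_Q>\Lambda_P\ \Longrightarrow\ \text{some }F\in\cF\text{ is }Q\text{-representable}
\]
for every palette $Q$.

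To attack this, let $\delta_t(Q)$ be the largest edge density of a $Q$-representable $3$-graph on $t$ vertices; a standard averaging argument shows $\delta_t(Q)$ is non-increasing in $t$ with limit $\Lambda_Q$. If $Q$ represents no non-$P$-representable $3$-graph on at most $M$ vertices, then every $Q$-representable $3$-graph on at most $M$ vertices is $P$-representable, so $\delta_M(Q)\le\delta_M(P)$ and hence $\Lambda_Q\le\delta_M(Q)\le\delta_M(P)$; since $\delta_M(P)\to\Lambda_P$, this already gives $\pi_{\vvv}(\cF_M)\to\Lambda_P$, where $\cF_M$ is the family of non-$P$-representable $3$-graphs of order at most $M$. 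The remaining --- and main --- task is to show that for a suitable $M=M(P)$ the threshold is \emph{exactly} $\Lambda_P$. Here I would analyse an $\cF_M$-free palette $Q$ together with an optimal weight $\by^\star$: the Lagrange (Motzkin--Straus) stationarity conditions constrain the ``links'' of the colours in $\operatorname{supp}(\by^\star)$, and the hypothesis that $Q$ avoids the boundedly many densest non-$P$-representable configurations of small order forces each such link to coincide with a link already realised inside $P$; splicing these links back into a weighting of $P$ then produces a weight for $P$ of value $\lambda_Q(\by^\star)$, whence $\Lambda_Q\le\Lambda_P$. Since $P$ exhibits only finitely many link types, a single $M(P)$ controls everything; a few further explicit non-$P$-representable $3$-graphs may have to be adjoined to $\cF_M$, but all members of $\cF$ remain non-$P$-representable, so the lower bound is unaffected.

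The crux is exactly this last step. Because $\Lambda_{\pal}$ need not be well-ordered, there may be palettes $Q$ with $\Lambda_Q$ exceeding $\Lambda_P$ by an arbitrarily small amount, and a soft compactness argument only yields $\pi_{\vvv}(\cF_M)\downarrow\Lambda_P$, never equality at a finite stage. Ruling out such ``barely supercritical'' palettes --- showing that each of them, despite possibly requiring many colours, must already represent one of finitely many bounded non-$P$-representable $3$-graphs --- is where the fine structure of the optimal Lagrangian weights of $P$ has to be exploited; by contrast, the regularity/homogenisation reduction of the second paragraph and the probabilistic lower bound of the first are routine.
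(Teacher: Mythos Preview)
Your lower bound and your reduction of the upper bound to a statement about palettes are both correct and essentially match the paper (the paper formalises the latter via reduced $3$-graphs and Lamaison's homogenisation lemma, exactly the ``Ramsey-type homogenisation'' you allude to). You also correctly identify the crux: one must rule out ``barely supercritical'' palettes $Q$ using only a \emph{finite} family $\cF_M$.

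The gap is in your proposed resolution of that crux. The argument via Motzkin--Straus stationarity and ``link types'' does not go through. Stationarity at the optimal weight $\by^\star$ of $Q$ only equates certain polynomial derivatives; it does not tell you which \emph{patterns} are present in $Q$, and the hypothesis that $Q$ paints no non-$P$-representable $3$-graph of order $\le M$ is a statement about paintings of $3$-graphs, not about links of individual colours. There is no mechanism by which ``$Q$ avoids finitely many small configurations'' forces each link of a colour in $\operatorname{supp}(\by^\star)$ to be a link already present in $P$, and even if every link of $Q$ matched one of $P$, this would not let you ``splice'' the weight $\by^\star$ into a weight for $P$: two colours of $Q$ with the same $P$-link could still interact with each other via a pattern not in $P$. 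Concretely, $Q$ might be a blow-up of $\rev(P)$, which paints exactly the same $3$-graphs as $P$ yet need not be homomorphic to $P$ at all; your link argument gives no account of why $\rev(P)$ appears, and it is unavoidable.

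What the paper actually does here is quite different and requires three substantial ingredients you do not mention. First, a structural Ramsey lemma (proved via the Ne\v{s}et\v{r}il--R\"odl partite construction): for any palette $Q$ \emph{not} contained in a blow-up of $P$ or of $\rev(P)$, there exists a single $3$-graph $F$ that $Q$ paints but $P$ does not. This is the replacement for your link argument, and it is where the essential finiteness comes from. Second, a removal lemma for palettes (via a weak regularity lemma for ordered triples) showing that an $\cF_M$-deficient palette on $n$ colours is $o(n^3)$-close to a subpalette of a blow-up of $P$ or $\rev(P)$. Third, a stability argument in the style of Pikhurko, using rigid subconfigurations of large blow-ups of $P$, upgrading ``close to a blow-up'' to ``exactly a blow-up'' for the extremal $Q$. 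Your soft observation that $\delta_M(P)\to\Lambda_P$ is correct but, as you yourself note, only gives $\pi_{\vvv}(\cF_M)\downarrow\Lambda_P$; getting equality at a finite stage genuinely needs the Ramsey lemma plus removal plus stability, and none of these is replaceable by a Lagrangian/link computation.
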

	
	Until recently, the only known members of $\Pi_{\vvv,\fin}$ were $0, 1/27, 4/27, 1/4$, and $8/27$ \cites{GKL:24, GKV:16, RRS:18, RRS:182, GIKKL:24}.
    Recently, two infinite families of uniform Tur\'an densities were obtained: one converging to~$1/2$ \cite{L:24} and another being the uniform Tur\'an densities of large stars~\cite{LW:24}.
    All of these densities are rational numbers.
    A corollary of Theorem~\ref{thm:main} is that there exist irrational uniform Tur\'an densities of finite families (e.g., see Observation 6.1, \cite{KSS:24}).
    
    Interestingly, one of the core steps (Lemma~\ref{lem:distinguish_palette}) in our proof is about structural Ramsey theory.
    The proof of this key lemma relies on the partite construction method of Ne\v{s}et\v{r}il and R\"{o}dl~\cite{NR:87}, and for this reason the bounds on the graphs in~$\cF$ are enormous.
    A generalization of the aforementioned lemma was obtained independently by Kr\'{a}l, Ku\v{c}er\'{a}k, Lamaison, and Tardos~\cite{KKLT:25+}.
	
	\section{Palettes}\label{sec:intropalettes}
	
	In this section, we present the main technical result of this paper.
	To do so, we first define the notion of a palette.
	
	\begin{dfn}\label{dfn:palLag}
		A \emph{palette} is a pair $P=(C,E)$ consisting of a set of colors~$C$ and a set of patterns~$E \subseteq C^3$.
	\end{dfn}
	
	Although this definition is very similar to that of ordered $3$-graphs, note that a palette may contain degenerate patterns, i.e., patterns that contain fewer than three colors.
	We denote the set of colors of a palette~$P$ by~$C(P)$, while~$P$ should be understood as the set of patterns~$E(P)$.
	Let~$c(P)$ and~$e(P)$ be the number of colors and the number of patterns of~$P$, respectively, and let~$d(P):=e(P)/c(P)^3$ be its density.
	Note that for~$p\in P$,~$\{p\}$ can be viewed as a palette itself and as usual we omit the parentheses when writing~$C(p)$ etc.
	We say that~$P$ is \emph{non-degenerate} if every pattern of~$P$ is non-degenerate, i.e., for every~$p \in P$, it holds that~$c(p)=3$.
	Given a subset~$U\subseteq C(P)$ of colors, let~$P[U]$ be the induced subpalette on~$U$.
	That is, the palette with~$C(P[U])=U$ and~$P[U]:=\{p\in P:\: C(p)\subseteq U\}$.
	
	Given two palettes~$P$ and~$Q$, a \emph{(palette) homomorphism} from~$P$ to~$Q$ is a map $\psi:C(P)\to C(Q)$ such that for every pattern~$p=(c_1,c_2,c_3)\in P$, we have~$\psi(p)=(\psi(c_1),\psi(c_2),\psi(c_3))\in Q$.
	As with hypergraphs, we usually do not distinguish between isomorphic palettes.
	If there is an injective homomorphism from~$P$ to~$Q$, we also say that~$P$ is contained in~$Q$ (or that $P$ is a subpalette of $Q$), denoted by~$P\subseteq Q$.
	We say that a palette~$Q$ is a \emph{blow-up} of a palette~$P$ if it can be obtained from~$P$ by replacing every color with some number of copies of itself.
	More formally, we say that~$Q$ is a blow-up of~$P$ with \emph{partition structure}~$C(Q)=\bigcup_{c\in C(P)}V_c$ for some pairwise disjoint sets~$V_c$,~$c\in C(P)$, if $$Q=\{(x_1,x_2,x_3):x_i\in V_{c_i}\text{ for all }i\in[3]\text{ and }(c_1,c_2,c_3)\in P\}\,.$$
	Note that~$P$ is contained in a blow-up of~$Q$ if and only if there is a homomorphism from~$P$ to~$Q$.
    In the case that there is not only a homomorphism from~$P$ to~$Q$ but an isomorphism, we denote this by~$P \cong Q$.
	
	Given a~$3$-graph~$F=(V,E)$ on~$n$ vertices, we say that~$P$ \emph{paints}~$F$ if there exists a total ordering~$\strictif$ of~$V$ and a coloring~$\chi:V^{(2)}\to C(P)$ such that for every edge~$xyz\in E$ with~$x\strictif y \strictif z$, we have
	\begin{align}\label{eq:paints}
		(\chi(xy),\chi(xz),\chi(yz))\in P.   
	\end{align} 
	Sometimes we refer to such a tuple~$(\strictif,\chi)$ as a painting of~$F$ (using~$P$).
	If there is no painting of~$F$ using~$P$, we say that $P$ does not paint $F$, or alternatively, that $P$ is $F$-deficient.
	We say that~$P$ does not paint a family~$\cF$, or is~$\cF$-deficient, if~$P$ does not paint~$F$ for every~$F\in \cF$. 
	
	Palettes were introduced in \cites{EH:72, Ro:86, R:20} in the context of describing a general lower bound construction for the uniform Tur\'{a}n density, called the \emph{Palette construction}.
	Given a family~$\cF$ of~$3$-graphs and a palette~$P\subseteq [t]^3$ on~$t$ colors such that~$P$ is~$\cF$-deficient, we construct an~$\cF$-free hypergraph~$H$ with vertex set~$[n]$ as follows.
	Let~$x_1,\ldots, x_t \in [0,1]$ with~$\sum_{i=1}^t x_i=1$, and let~$\chi:[n]^{(2)}\rightarrow [t]$ be an auxiliary coloring defined probabilistically by coloring each pair independently with
	\begin{align*}
		\PP(\chi(ab)=i)=x_i,\quad \forall ab \in [n]^{(2)},\, i\in [t].
	\end{align*}
	The edges of the hypergraph $H$ are defined using the auxiliary coloring $\chi$ as follows $$H:=\{abc\in [n]^{(3)}:\: a<b<c\text{ and }(\chi(ab),\chi(ac),\chi(bc))\in P\}\,.$$
	One can observe that by definition,~$P$ paints~$H$ and therefore paints any subgraph of~$H$.
	Hence, since~$P$ is~$\cF$-deficient,~$H$ is~$\cF$-free.
	Moreover, the probability that a triple~$abc\in[n]^{(3)}$ is an edge in~$H$ is given by
	\begin{align*}
		\PP(abc\in H)=\sum_{(i,j,k)\in P} x_ix_jx_k.
	\end{align*}
	It can be shown, by a standard application of concentration inequalities, that for each~$\eta$, with high probability the hypergraph~$H$ is~$(d,\eta)$-dense for~$d=\sum_{(i,j,k)\in P} x_ix_jx_k$ when~$n$ is taken sufficiently large. 
	
	The construction above naturally motivates the next definition.
	Denote the standard $(r-1)$-simplex by
	\begin{align*}
		\bbS_r:=\{(x_1,\ldots,x_r) \in [0,1]^r:\: x_1+\ldots+x_r=1\}.
	\end{align*}
	
	\begin{dfn}
		A \emph{weighting of a palette} $P$ is a vector~$\bx=(x_i)_{i \in C(P)}\in\bbS_{c(P)}$.
		Given a palette~$P$ with a weighting~$\bx$, set $$\lambda_P(\bx):=\sum_{(i,j,k) \in P}x_ix_jx_k\,.$$ 
		We define the \emph{palette Lagrangian} $\Lambda_P$ of $P$ as
		\begin{equation*}
			\Lambda_P:= \max_{\bx \in \bbS_{c(P)}}\lambda_P(\bx).
		\end{equation*}
	\end{dfn}
	
	As defined in Section \ref{sec:intro}, the set of values obtained as the Lagrangian of a palette is denoted by~$\Lambda_{\pal}:= \{\Lambda_P  : \: P \text{ is a palette} \}$.
	A consequence of the construction shown above is the following folklore result.
	
	\begin{fact}\label{lem:palette_lb}
		Let $P$ be a palette and let $\cF$ be a family of~$3$-graphs such that $P$ is $\cF$-deficient. Then~$\pi_{\vvv}(\cF) \geq \Lambda_P$.
	\end{fact}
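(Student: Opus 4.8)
The plan is simply to make rigorous the Palette construction described just above. Fix $\eta>0$ and a weighting $\bx=(x_i)_{i\in C(P)}\in\bbS_{c(P)}$. It suffices to produce, for every $n$, an $\cF$-free $3$-graph $H$ on at least $n$ vertices that is $(\lambda_P(\bx),\eta)$-dense: granting this for all $\eta>0$ and all $n$ gives $\pi_{\vvv}(\cF)\ge\lambda_P(\bx)$, and since $\lambda_P$ is continuous on the compact set $\bbS_{c(P)}$ its supremum $\Lambda_P$ is attained, so taking $\bx$ to be a maximiser (or just taking the supremum over $\bx$ at the end) yields $\pi_{\vvv}(\cF)\ge\Lambda_P$.

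For the construction I would take $N\ge n$ to be specified later, colour each pair $ab\in[N]^{(2)}$ independently by $\chi(ab)=i$ with probability $x_i$, and set $H=\{abc\in[N]^{(3)}: a<b<c \text{ and }(\chi(ab),\chi(ac),\chi(bc))\in P\}$. By construction, the natural order $\strictif$ of $[N]$ together with $\chi$ is a painting of $H$ using $P$, and restricting this painting to any sub-$3$-graph of $H$ shows that $P$ paints it; in particular $P$ would paint any copy of a member of $\cF$ sitting inside $H$. Since $P$ is $\cF$-deficient, $H$ is therefore $\cF$-free, with probability $1$. For the density, note that for any $X\subseteq[N]$ and any $a<b<c$ in $X$ one has $\PP(abc\in H)=\sum_{(i,j,k)\in P}x_ix_jx_k=\lambda_P(\bx)$, so $\EE\, e(X)=\lambda_P(\bx)\binom{|X|}{3}$.

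Finally I would apply concentration. Viewing $e(X)$ as a function of the $\binom{N}{2}$ independent colour choices, changing one colour alters the membership in $H$ of at most $N-2$ triples, hence changes $e(X)$ by at most $N$; so by the bounded-differences (McDiarmid) inequality, $\PP\bigl(e(X)<\lambda_P(\bx)\binom{|X|}{3}-\tfrac{\eta}{2}N^3\bigr)\le \exp(-\eta^2 N^2)$. A union bound over the at most $2^N$ subsets $X\subseteq[N]$ shows that, for $N$ large (depending on $\eta$), the $3$-graph $H$ is $(\lambda_P(\bx),\eta)$-dense with probability at least $1/2$; combined with $\cF$-freeness this produces the required $H$ on $N\ge n$ vertices. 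The only point needing any care is exactly this last step: the union bound over the $2^N$ vertex subsets must beat the concentration tail, which it does because each colour affects only $O(N)$ triples — so the sum of squared bounded-difference constants is only $O(N^4)$ — while the permitted deviation $\eta N^3$ is polynomially larger than $N^{5/2}$. There is no deeper obstacle; this is a routine first-moment-and-concentration argument.
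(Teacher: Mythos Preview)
Your proposal is correct and follows exactly the approach the paper indicates: the paper presents this as a folklore fact and merely states that ``by a standard application of concentration inequalities'' the palette construction yields a $(d,\eta)$-dense $\cF$-free $3$-graph, without spelling out the details. Your write-up supplies precisely those details via McDiarmid and a union bound over the $2^N$ subsets, which is the intended routine argument.
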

	
	A folklore conjecture in the area, stated formally in \cite{R:20}, is that every lower bound for a uniform Tur\'{a}n density should be obtained by a palette construction.
	In a recent breakthrough, Lamaison \cite{L:24} proved an approximate version of the conjecture showing that for every family $\cF$, the Tur\'{a}n density $\pi_{\vvv}(\cF)$ can be approximated by a sequence of palette Lagrangians.
	In this paper, we prove in some sense the converse of the conjecture: Every palette Lagrangian is the uniform Tur\'{a}n density of some finite family $\cF$.
	
	The proof of our main result proceeds by transferring the original problem to a Tur\'{a}n-type problem for palettes.
	For this it is crucial that the property of being~$\cF$-deficient is invariant under homomorphisms.
	
	\begin{fact}\label{fact:hom_invariant}
		Let $P$ and $Q$ be two palettes such that there exists a homomorphism~$\psi:Q \rightarrow P$, and let $\cF$ be a family of $3$-graphs. If $P$ is $\cF$-deficient, then $Q$ is $\cF$-deficient.
	\end{fact}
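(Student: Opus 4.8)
The plan is to prove the contrapositive: assuming $Q$ is \emph{not} $\cF$-deficient, I will produce a painting of some $F\in\cF$ using $P$, contradicting the $\cF$-deficiency of $P$. The whole argument rests on the observation that a painting is, at heart, a coloring of pairs by colors, and that colorings can be post-composed with a palette homomorphism without disturbing the ordering.

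First, suppose $Q$ paints some $F=(V,E)\in\cF$, and fix a painting $(\strictif,\chi)$ of $F$ using $Q$; thus $\strictif$ is a total order on $V$ and $\chi\colon V^{(2)}\to C(Q)$ satisfies $(\chi(xy),\chi(xz),\chi(yz))\in Q$ for every edge $xyz\in E$ with $x\strictif y\strictif z$. Next, set $\chi':=\psi\circ\chi\colon V^{(2)}\to C(P)$, and keep the same order $\strictif$. I claim $(\strictif,\chi')$ is a painting of $F$ using $P$: for any edge $xyz\in E$ with $x\strictif y\strictif z$ we have $(\chi(xy),\chi(xz),\chi(yz))\in Q$, and applying the palette homomorphism $\psi$ coordinatewise gives
\[
\big(\chi'(xy),\chi'(xz),\chi'(yz)\big)=\big(\psi(\chi(xy)),\psi(\chi(xz)),\psi(\chi(yz))\big)\in P,
\]
so \eqref{eq:paints} holds for $(\strictif,\chi')$ on every edge of $F$. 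Hence $P$ paints $F\in\cF$, contradicting the hypothesis that $P$ is $\cF$-deficient. Therefore $Q$ is $\cF$-deficient, as claimed.

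I do not expect a genuine obstacle here: the statement is essentially unwinding of definitions. The only point worth flagging is that a palette homomorphism is required to send patterns to patterns but need \emph{not} be injective, and it is precisely this that makes the composition $\psi\circ\chi$ a legitimate recoloring even when $\psi$ identifies several colors of $Q$; no control over the order $\strictif$ is needed since it is carried over verbatim.
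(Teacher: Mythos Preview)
Your proof is correct and follows exactly the same approach as the paper: assume $Q$ paints some $F\in\cF$ via $(\strictif,\chi)$, then $(\strictif,\psi\circ\chi)$ witnesses that $P$ paints $F$, a contradiction. The paper's argument is simply a terser version of yours.
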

	
	\begin{proof}
		Suppose, to the contrary, that~$Q$ paints some~$F \in \cF$.
		Then there exists an ordering~$\strictif$ of $V(F)$ and a coloring~$\chi: V(F)^{(2)} \rightarrow C(Q)$ satisfying (\ref{eq:paints}). Hence, by definition, the ordering~$\strictif$ and the coloring~$\psi \circ \chi: V(F)^{(2)}\rightarrow C(P)$ witness that~$P$ paints~$F$, contradicting the assumption that~$P$ is $\cF$-deficient.
	\end{proof}
	
	The first consequence of Fact \ref{fact:hom_invariant} is that the property of being $\cF$-deficient is closed under taking subpalettes.
	In particular, this allows us to define the palette Tur\'{a}n density of families of~$3$-graphs.
	Given a family~$\cF$ of~$3$-graphs, we define the \emph{palette extremal number of $\cF$} by
	\begin{align*}
		\expal(n,\cF):=\max\{e(P):\: P \text{ is an $\cF$-deficient palette with }c(P)=n\},
	\end{align*}
	i.e., the maximum number of patterns an $\cF$-deficient palette with $n$ colors can have.
	Similarly as for hypergraphs, one can show that the quantity $\expal(n,\cF)/n^3$ converges to a limit.
	
	\begin{prop}\label{prop:limit}
		The limit $\lim\limits_{n\rightarrow \infty} \frac{\expal(n,\cF)}{n^3}$ exists.
	\end{prop}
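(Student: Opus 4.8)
The plan is to run the standard Katona--Nemetz--Simonovits averaging argument in the palette setting, with the one wrinkle that exact monotonicity is obtained after normalizing by $n(n-1)(n-2)$ rather than by $n^3$. Write $f(n):=\expal(n,\cF)$ and, for $n\ge 3$, set
\begin{align*}
	g(n):=\frac{f(n)}{n(n-1)(n-2)}\,.
\end{align*}
I will show that $g$ is non-increasing. Since $g(n)\ge 0$ for every $n$, it then follows that $g$ converges, and because $n(n-1)(n-2)/n^3\to 1$ we obtain $\expal(n,\cF)/n^3=g(n)\cdot\tfrac{n(n-1)(n-2)}{n^3}\to\lim_{n\to\infty}g(n)$, which is exactly the assertion of the proposition.

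To prove monotonicity, fix integers $3\le m\le n$ and take an $\cF$-deficient palette $P$ with $c(P)=n$ attaining $e(P)=f(n)$. First I would choose a set $U\subseteq C(P)$ uniformly at random among all $m$-element subsets. As noted right after Fact~\ref{fact:hom_invariant}, being $\cF$-deficient is closed under passing to induced subpalettes, so $P[U]$ is again $\cF$-deficient and hence $e(P[U])\le f(m)$. Since $e(P[U])$ counts exactly those patterns $p\in P$ with $C(p)\subseteq U$, linearity of expectation gives
\begin{align*}
	f(m)\ \ge\ \EE\bigl[e(P[U])\bigr]\ =\ \sum_{p\in P}\PP\bigl[C(p)\subseteq U\bigr]\,.
\end{align*}
For any pattern $p$ we have $|C(p)|\le 3$, and requiring a smaller set of colors to fall inside $U$ only raises the probability; hence $\PP[C(p)\subseteq U]\ge\binom{n-3}{m-3}/\binom{n}{m}=\tfrac{m(m-1)(m-2)}{n(n-1)(n-2)}$ uniformly over $p$, \emph{whether or not} $p$ is degenerate. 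Summing over the $f(n)$ patterns of $P$ yields $f(m)\ge\tfrac{m(m-1)(m-2)}{n(n-1)(n-2)}\,f(n)$, that is, $g(m)\ge g(n)$.

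The argument is short and I do not anticipate a genuine obstacle; the only points needing care are (i)~normalizing by $n(n-1)(n-2)$ so that the averaging inequality becomes precisely the monotonicity of $g$, and (ii)~observing that degenerate patterns are harmless --- in fact they only help, since under a random restriction they survive with probability at least as large as the non-degenerate ones. Both are taken care of by the uniform lower bound on $\PP[C(p)\subseteq U]$ above, which makes no distinction between degenerate and non-degenerate patterns.
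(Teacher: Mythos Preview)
Your proof is correct and uses the same Katona--Nemetz--Simonovits averaging idea as the paper. The one genuine difference is in how degenerate patterns are handled: the paper introduces an auxiliary extremal function restricted to \emph{non-degenerate} $\cF$-deficient palettes, proves that \emph{that} quantity divided by $n(n-1)(n-2)$ is non-increasing (there the averaging gives an equality, since every pattern uses exactly three colors), and then transfers convergence to $\expal$ via the sandwich $g(n,\cF)\le\expal(n,\cF)\le g(n,\cF)+3n^2$. Your route is more direct: by noting that a degenerate pattern survives restriction to $U$ with probability at least that of a non-degenerate one, you obtain the monotonicity inequality for $\expal$ itself and avoid the auxiliary function altogether. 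Both arguments are short; yours is marginally cleaner.
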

	
	\begin{proof}
		Recall that a palette $P$ is non-degenerate if every pattern of $P$ contains $3$ distinct colors. 
		For a family $\cF$ of $3$-graphs and an integer~$n\geq3$, we define the parameter $g(n,\cF)$ by
		\begin{align*}
			g(n,\cF):=\max\{e(P):\: P \text{ is a non-degenerate $\cF$-deficient palette with }c(P)=n\}.
		\end{align*}
		We claim that the quantity~$\frac{g(n,\cF)}{n(n-1)(n-2)}$ is non-increasing.
		Indeed, let~$P$ be a non-degenerate~$\cF$-deficient palette on~$n+1$ colors that has~$g(n+1,\cF)$ patterns.
		Take a random subset~$U\subseteq C(P)$ of size~$n$ and let~$P[U]$ be the induced subpalette on this set of colors.
		Then, by Fact~\ref{fact:hom_invariant}, we have that~$P[U]$ is~$\cF$-deficient and consequently~$e(P[U])\leq g(n,\cF)$.
		Hence,
		\begin{align*}
			g(n,\cF)\geq \EE(e(P[U]))=\frac{n-2}{n+1}g(n+1,\cF)\,,
		\end{align*}
		which implies that $\frac{g(n+1,\cF)}{(n+1)n(n-1)}\leq \frac{g(n,\cF)}{n(n-1)(n-2)}$.
		Since every non-negative non-increasing sequence has a limit, we obtain that~$\lim\limits_{n\rightarrow \infty} \frac{g(n,\cF)}{n(n-1)(n-2)}$ exists.
		The proposition now follows by the simple observation that~$g(n,\cF)\leq \expal(n,\cF)\leq g(n,\cF)+3n^2$.
	\end{proof}
	
	We define the limit obtained in Proposition~\ref{prop:limit} as the \emph{palette Turán density of a family}~$\cF$,
	\begin{align*}
		\pi_{\pal}(\cF):=\lim_{n\rightarrow \infty} \frac{\expal(n,\cF)}{n^3}\,.
	\end{align*}
	A consequence of the work in \cites{R:20, L:24} is that $\pi_{\pal}(\cF)=\pi_{\vvv}(\cF)$ for finite $\cF$ (see Section \ref{sec:main} for more details). Therefore, to show Theorem \ref{thm:main}, it suffices to show that every palette Lagrangian is attained as a palette Turán density of a finite family.

	By taking the uniform weighting~$\bx=(x_i)_{i\in C(P)}$ defined by~$x_i=1/c(P)$, we obtain that  
	\begin{align}\label{eq:lambda_density}
		d(P)\leq \Lambda_P\,.
	\end{align}
	Moreover, if $\by=(y_i)_{i\in C(P)}$ is an optimal weighting for the palette~$P$, then one can approximate~$\Lambda_P$ by taking a sequence of blow-ups~$\{S^{(\ell)}\}_{\ell\in\mathds{N}}$ of~$P$ with partition structure~$C(S^{(\ell)})=\bigcup_{i\in C(P)} V_{i}^{(\ell)}$ such that~$|V_i^{(\ell)}|/c(S^{(\ell)}) \rightarrow y_i$.
	Conversely, every blow-up of~$P$ yields a weighting for~$P$.
	Hence, it follows that
	\begin{align}\label{eq:lambda_blowup}
		\Lambda_P=\lim_{n\to\infty}\max\{d(S):\: S \text{ is a blow-up of } P\text{ with }c(S)=n\}.
	\end{align}
	
	The equality in~(\ref{eq:lambda_blowup}) and the fact that the property of being~$\cF$-deficient is blow-up invariant (Fact~\ref{fact:hom_invariant}) hint at a way to obtain a palette Lagrangian as a palette Turán density: finding a finite family~$\cF$ such that the extremal constructions for~$\expal(n,\cF)$ are exactly the family of blow-ups of~$P$.
	Unfortunately, such a family does not always exist (see Section~\ref{sec:ramsey}), but by adding an extra family of extremal constructions with the same palette Lagrangian, one can achieve such a goal.
	Given a palette~$P$, we define the \emph{reverse palette}~$\rev(P)$ of~$P$ as 
	\begin{align*}
		\rev(P)=\{(c,b,a):\: (a,b,c)\in P\}.
	\end{align*}
	That is,~$\rev(P)$ is the palette obtained by reversing the order of the patterns of~$P$. We say that a palette~$P$ is \emph{reduced} if for every proper subpalette~$Q\subsetneq P$, we have~$\Lambda_Q<\Lambda_P$. We follow the graph convention and let~$\EXpal(n,\cH) = \{Q : \: e(Q) = \expal(n,\cH)\text{ and }c(Q)=n \}$ be the set of extremal palettes.
	The following is the main technical result of this paper.
	
	\begin{theorem}\label{thm:expal_eq_blowup}
		Let~$P$ be a reduced palette.
		There exists a finite family~$\cH$ such that~$P$ is~$\cH$-deficient and for all~$n\in\mathds{N}$
            \begin{align}\label{eq:maintech}
			\EXpal(n,\cH) \subseteq \{Q :\: Q \text{ is a blow-up of $P$ or a blow-up of $\rev(P)$ and }c(Q)=n\}\,.
		\end{align}
            In particular, it immediately follows that
		\begin{align*}
			\expal(n,\cH) = \max \{e(Q) :\: Q \text{ is a blow-up of $P$ with }c(Q)=n\}\,.
		\end{align*}
	\end{theorem}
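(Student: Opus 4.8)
The plan is to split the theorem into one genuinely hard structural input and a routine counting step. The hard input is Lemma~\ref{lem:distinguish_palette}, which I would use in the following form: since~$P$ is reduced, there is a \emph{finite} family~$\cH$ of~$3$-graphs such that~$P$ is~$\cH$-deficient and every~$\cH$-deficient palette~$Q$ admits a homomorphism to~$P$ or to~$\rev(P)$. Granting this, the theorem follows by elementary bookkeeping; all of the real work — the partite construction, the enormous bounds, and the use of the reducedness hypothesis — lives inside the proof of that lemma.

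Two preliminary remarks come first. Reversing the total order on the vertex set of a~$3$-graph~$F$ turns a painting of~$F$ by~$P$ into a painting of~$F$ by~$\rev(P)$ and conversely (the ordered colour triple of each edge gets reversed), so~$P$ and~$\rev(P)$ paint exactly the same~$3$-graphs; in particular~$\rev(P)$ is also~$\cH$-deficient, and hence, by Fact~\ref{fact:hom_invariant} applied to the collapsing homomorphism, every blow-up of~$P$ and every blow-up of~$\rev(P)$ is~$\cH$-deficient. Moreover, for every~$n$ a blow-up of~$P$ onto~$n$ colours exists (allowing empty classes) and is~$\cH$-deficient, so the maximum defining~$\expal(n,\cH)$ is attained over the finitely many palettes on~$n$ colours and~$\EXpal(n,\cH)\neq\emptyset$.

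Then I would fix~$n$, take~$Q\in\EXpal(n,\cH)$, and apply Lemma~\ref{lem:distinguish_palette} to get a homomorphism~$\psi$ from~$Q$ to~$P$, say (the~$\rev(P)$ case being identical). Let~$S$ be the blow-up of~$P$ with partition structure~$C(S)=\bigcup_{c\in C(P)}\psi^{-1}(c)$. Then~$C(S)=C(Q)$, every pattern of~$Q$ is a pattern of~$S$ because~$\psi$ is a homomorphism, and~$c(S)=n$; thus~$Q\subseteq S$ and~$e(Q)\leq e(S)$. On the other hand~$S$ is~$\cH$-deficient by the first remark, so~$e(S)\leq\expal(n,\cH)=e(Q)$. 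Hence~$e(Q)=e(S)$, and since~$Q\subseteq S$ on the same colour set,~$Q=S$ is a blow-up of~$P$ (or of~$\rev(P)$ in the other case); this is~\eqref{eq:maintech}. For the ``in particular'' statement: every blow-up of~$P$ onto~$n$ colours is an~$\cH$-deficient palette on~$n$ colours, so~$\expal(n,\cH)\geq\max\{e(S):S\text{ a blow-up of }P,\ c(S)=n\}$; conversely any~$Q\in\EXpal(n,\cH)$ is a blow-up of~$P$ or of~$\rev(P)$ onto~$n$ colours, and a blow-up of~$\rev(P)$ has the same number of patterns as the blow-up of~$P$ with the same partition structure, so~$\expal(n,\cH)=e(Q)\leq\max\{e(S):S\text{ a blow-up of }P,\ c(S)=n\}$; combining the two inequalities gives the claimed equality.

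The single point of difficulty is Lemma~\ref{lem:distinguish_palette} itself, and I do not expect a shortcut around it: admitting a homomorphism to the fixed finite palette~$P$ is not a local property of~$Q$ (just as bounded chromatic number is not characterised by forbidden subgraphs), so one cannot extract~$\cH$ by a compactness argument and must instead build the distinguishing~$3$-graphs explicitly and uniformly, which is precisely what the Ne\v{s}et\v{r}il--R\"odl partite construction~\cite{NR:87} delivers — at the price of the reducedness hypothesis and of the enormous bounds mentioned in the introduction.
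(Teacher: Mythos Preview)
Your bookkeeping is fine, but the ``hard input'' you invoke is not Lemma~\ref{lem:distinguish_palette}. That lemma takes a \emph{single} palette~$Q$ not homomorphic to~$P$ or~$\rev(P)$ and produces a \emph{single}~$3$-graph~$F=F_Q$ distinguishing them; the construction depends on~$c(Q)$ and~$e(Q)$, and~$v(F_Q)$ grows with~$Q$. Collecting all~$F_Q$ over all bad~$Q$ gives an infinite family~$\cF(P)$, and the paper says exactly this at the start of Section~\ref{sec:stability}: for the full infinite~$\cF(P)$, every~$\cF(P)$-deficient~$Q$ is homomorphic to~$P$ or~$\rev(P)$, but ``unfortunately, the family~$\cF(P)$ might be infinite in size and hence cannot be used directly''. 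The statement you want --- a single finite~$\cH$ such that~$\cH$-deficiency already forces a homomorphism to~$P$ or~$\rev(P)$ --- is strictly stronger than Theorem~\ref{thm:expal_eq_blowup} (which only constrains \emph{extremal}~$Q$), is not proved anywhere in the paper, and is not a plausible consequence of the Ne\v{s}et\v{r}il--R\"odl method alone. Note also that Lemma~\ref{lem:distinguish_palette} does not use reducedness at all; the reducedness hypothesis enters only through Proposition~\ref{prop:closetomax}, Lemma~\ref{lem:domination}, and Lemma~\ref{lem:rigidity}, none of which your outline touches.

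The paper's actual route is: truncate to~$\cH=\cF(P)_M$ for a suitable~$M$, use the palette removal lemma (Lemma~\ref{lem:palette_removal}, via Corollary~\ref{cor:palette_removal}) to show that any~$\cH$-deficient~$Q$ on many colours is \emph{$\alpha$-close} to a blow-up of~$P$ or~$\rev(P)$, and then run a Pikhurko-style stability argument (Lemma~\ref{lem:stability}, using the rigid palette of Lemma~\ref{lem:rigidity}) to upgrade ``extremal and $\alpha$-close to a blow-up'' to ``equal to a blow-up''. The case of small~$n$ is handled separately by applying Lemma~\ref{lem:distinguish_palette} to the finitely many bad palettes on at most~$N_0$ colours. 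Your proposal skips the removal lemma and the entire stability step, which is where the substance of the theorem lies.
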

	
	We remark that $P$ is not necessarily isomorphic to $\rev(P)$.
	As a simple example, consider $P=\{(1,2,3), (1,3,2)\}$ and $\rev(P)=\{(2,3,1),(3,2,1)\}$.
	One can verify that in this case, $P\not\cong \rev(P)$.

	\subsection*{Organization}
	The paper is organized as follows.
	The proof of Theorem \ref{thm:expal_eq_blowup} relies on three main ingredients.
	The first is a palette variant of the removal lemma introduced in~\cite{AFKS:00}, so in Section~\ref{sec:regularity} we use a regularity lemma for palettes to prove counting and removal lemmata for palettes painting graphs.
	The second is a structural Ramsey result in Section~\ref{sec:ramsey}, dedicated to the problem of distinguishing palettes based on the graphs they can paint.
	The third component is a stability argument based on the work of~\cite{P:12} (Sections~\ref{sec:properties} and~\ref{sec:stability}).
	For a brief outline of the proof of Theorem~\ref{thm:expal_eq_blowup}, the reader may refer to the introduction of Section~\ref{sec:stability}.
	Finally, in Section~\ref{sec:main}, we present a proof of Theorem~\ref{thm:main}.
	
	\section{Regularity lemma for palettes}\label{sec:regularity}

        For graphs, the following infinite removal lemma was shown in \cite{AS:05} (and a hypergraph analogue in \cite{ARS:07}).

        \begin{lemma}\label{lem:graph_removal}
            Given a (possibly infinite) family of graphs~$\cF$ and~$\alpha>0$, there are~$M,n_0 \in \NN $ and~$\beta >0$ so that the following holds for every graph~$G$ on~$n \geq n_0$ vertices. If, for every~$F \in \cF$ with~$v(F) \leq M$,~$G$ contains fewer than~$\beta n^{v(F)}$ copies of~$F$, then~$G$ can be made~$\cF$-free by removing at most~$\alpha n^2$ edges.
        \end{lemma}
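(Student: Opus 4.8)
With $\cF$ and $\alpha>0$ as given, set $\gamma:=\alpha/4$. The plan is to run the regularity-based argument of \cite{AS:05}; the only genuine difficulty, that $\cF$ may be infinite, is handled by the two-scale regularity lemma of \cite{AFKS:00}. The soft first step isolates the finitely much of $\cF$ that is relevant to a bounded partition: for $t\in\NN$, let $M(t)$ be the maximum --- over all graphs $R$ on at most $t$ vertices that admit a homomorphism from some member of $\cF$ --- of the least order of such a member of $\cF$; since there are only finitely many graphs on at most $t$ vertices, $M(t)<\infty$. Next I would fix, once and for all, a function $\mathcal{E}\colon\NN\to(0,1]$ with $\mathcal{E}(t)\le\alpha/8$ and with $\mathcal{E}(t)$ small enough --- \emph{in terms of $t$, $\gamma$, and $M(t)$ only} --- that the graph counting lemma applies to every graph on at most $M(t)$ vertices inside an $\mathcal{E}(t)$-regular family of pairs of density at least $\gamma/2$. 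Note that $\mathcal{E}$ depends only on $\cF$ and $\alpha$.

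Now I would invoke the iterated (strong) regularity lemma of \cite{AFKS:00} with constant $\alpha/8$ and refinement-error function $\mathcal{E}$: there is $T^\ast=T^\ast(\cF,\alpha)$ such that every graph $G$ on $n$ vertices carries equipartitions $\cP=\{V_1,\dots,V_t\}$ and a refinement $\cP'$ of $\cP$ with $t\le T^\ast$, $\cP$ being $(\alpha/8)$-regular and $\cP'$ being $\mathcal{E}(t)$-regular. Set $M:=M(t)$, and take $n_0$ large and $\beta>0$ small in terms of $T^\ast$, $M$, $\gamma$. Clean $G$ at the \emph{coarse} scale: delete every edge lying inside some cell $V_i$, inside an $(\alpha/8)$-irregular coarse pair, or inside a coarse pair of density below $\gamma$. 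Assuming --- as we may --- that $t$ is large, at most $(\tfrac1{2t}+\tfrac\alpha8+\tfrac\gamma2)n^2\le\alpha n^2$ edges are removed. Writing $G'$ for the result and $R$ for the coarse reduced graph on $[t]$ (with $ij\in E(R)$ exactly when $(V_i,V_j)$ is $(\alpha/8)$-regular of density $\ge\gamma$), it suffices to show that, under the hypothesis of the lemma, $G'$ is $\cF$-free.

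So suppose some $F\in\cF$ is a subgraph of $G'$. Sending each vertex of the copy to the index of its cell yields a homomorphism $F\to R$, since $G'$ has no edge inside a cell and every surviving edge lies in a dense regular coarse pair; hence $\cF$ has a member with a homomorphism to a graph on at most $t$ vertices, so by definition of $M(t)$ there is $F^\ast\in\cF$ with $v(F^\ast)\le M$ and a homomorphism $\psi\colon F^\ast\to R$. To finish I would count copies of $F^\ast$ in $G'$ using $\cP'$: each coarse pair $(V_i,V_j)$ with $ij\in E(R)$ has density $\ge\gamma$ in $G'$, and because $\cP'$ refines $\cP$ and is $\mathcal{E}(t)$-regular, a positive fraction of its fine sub-pairs are $\mathcal{E}(t)$-regular of density $\ge\gamma/2$; processing the at most $\binom M2$ edges of $\psi(F^\ast)$ greedily, one selects a single fine cell $W_i\subseteq V_i$ for each $i\in\psi(V(F^\ast))$ such that every required pair $(W_i,W_j)$ is $\mathcal{E}(t)$-regular of density $\ge\gamma/2$. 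By the choice of $\mathcal{E}(t)$, and since each $|W_i|\ge n/T^\ast$ is large once $n\ge n_0$, the counting lemma yields at least $\beta n^{v(F^\ast)}$ copies of $F^\ast$ inside $\bigcup_iW_i\subseteq V(G')\subseteq V(G)$. As $v(F^\ast)\le M$, this contradicts the hypothesis of the lemma; therefore $G'$ is $\cF$-free, that is, $G$ has been made $\cF$-free by deleting at most $\alpha n^2$ edges.

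The step I expect to be the real obstacle, and the reason a single application of Szemer\'edi's lemma does not suffice, is the circular-looking dependence just exploited: the bound $M$ on the order of the forbidden subgraphs that must be counted becomes available only \emph{after} the coarse partition is fixed, while the counting lemma needs the regularity parameter to be small in terms of $M$. The two-scale lemma of \cite{AFKS:00} resolves this precisely because its fine-partition error may depend on the (bounded) number of coarse cells; an alternative is a graph-limit compactness argument, passing to a limit $W$ of putative counterexamples, which one checks is $\cF$-free as a graphon yet $\alpha$-far from $\cF$-free, a contradiction. The hypergraph analogue of \cite{ARS:07} follows the same outline with hypergraph regularity replacing \cite{AFKS:00}.
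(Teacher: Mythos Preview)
The paper does not give its own proof of this lemma --- it is quoted from \cite{AS:05} --- but it does prove a palette analogue (Lemma~\ref{lem:palette_removal}), and your sketch follows that same architecture: define a size function on reduced structures, feed it into the error function of a two-scale regularity lemma, clean at the coarse scale, and count at the fine scale. Two points deserve tightening.

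First, you write ``Set $M:=M(t)$'' and then choose $n_0,\beta$ in terms of $M$. But $t$ is the number of coarse parts produced by applying regularity to the \emph{given} graph $G$, so $M$ as written depends on $G$ and the choice of constants is circular. The fix is exactly what the paper does in the palette proof: take $M:=\max_{r\le T^\ast}M(r)$, which is finite since $T^\ast$ bounds $t$ uniformly and there are only finitely many reduced graphs on at most $T^\ast$ vertices. Your closing paragraph shows you understand the $\cE$-versus-$M$ circularity, but that is a different circularity from this one.

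Second, the ``greedy'' selection of a single fine cell $W_i\subseteq V_i$ per coarse index does not go through as stated: the choices at different edges of $\psi(F^\ast)$ are coupled through shared endpoints, and the two-scale lemma only tells you that \emph{most} fine sub-pairs of a good coarse pair are regular and density-close, not that most $W_i$ are good against a \emph{fixed} $W_j$. The clean way --- and the way the paper packages it for palettes, in Corollary~\ref{cor:iterated_palette_regularity_app} and its proof in Appendix~\ref{app:regularity} --- is to choose one fine cell $U_i\subseteq V_i$ uniformly at random for each $i$ and union-bound over the at most $t^2$ pairs; with $\cE(t)$ small enough in terms of $t$ this succeeds with positive probability and delivers a single $U_i$ per $V_i$ with \emph{all} pairs $(U_i,U_j)$ simultaneously $\cE(t)$-regular. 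The paper then defines the reduced object directly in terms of the $U_i$, which sidesteps the density-matching step you need.

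With these two adjustments your argument is the standard one from \cite{AS:05}; the graph-limit alternative you mention at the end is also a legitimate route.
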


        The aim of this section is prove a version of this lemma for palettes.
        Instead of counting the number of copies of some~$F\in\cF$, we need to count the number of ways that a palette~$P$ paints~$F$.
        This is made precise in the following definition.
	
	\begin{dfn}\label{dfn:paint_count}
		Let~$F$ be a~$3$-graph and~$P$ a palette. 
        The number of ways that~$P$ paints~$F$ is defined as the number of maps~$\phi\colon \partial F \to C(P)$ for which there exists a total ordering~$<$ of~$V(F)$ such that~$(<,\phi)$ is a painting of~$F$.
	\end{dfn}
	
        We are now prepared to state the aforementioned removal lemma.
        
    	\begin{lemma}[Palette Removal Lemma]\label{lem:palette_removal}
		Given a (possibly infinite) family of~$3$-graphs~$\cF$ and~$\alpha>0$, there are~$M=M_{\ref{lem:palette_removal}},N=N_{\ref{lem:palette_removal}}\in \mathds N$ and~$\beta = \beta_{\ref{lem:palette_removal}}>0$ such that the following holds for every palette~$P$ on~$n\geq N$ colors.
		If, for every~$s\in [\binom{M}{2}]$,~$P$ paints the~$3$-graphs~$F\in \cF$ with~$|\partial F| = s$ and~$v(F) \leq M$ in less than~$\beta n^{s}$ ways, then there is an~$\cF$-deficient palette~$Q\subseteq P$ with~$|P\setminus Q| \leq \alpha n^3$.  
	\end{lemma}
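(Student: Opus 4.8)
The plan is to carry out, for palettes, the proof of the infinite removal lemma (as in Lemma~\ref{lem:graph_removal}): a weak regularity lemma, a counting lemma, and a compactness trick to pass from the finitely many ``small'' members of~$\cF$ to all of~$\cF$. First I would establish a \emph{weak regularity lemma for palettes}: viewing a palette~$P$ on color set~$[n]$ as an ordered $3$-uniform array~$P\subseteq[n]^3$, for every~$\varepsilon>0$ there is~$T_0=T_0(\varepsilon)$ so that~$[n]$ admits an equitable partition~$[n]=V_1\cup\dots\cup V_t$ with~$1/\varepsilon\le t\le T_0$ in which all but at most~$\varepsilon t^3$ of the \emph{ordered} triples~$(V_i,V_j,V_k)$ are~$\varepsilon$-regular in the weak sense (i.e.\ $|P\cap(A\times B\times C)|=d_{ijk}|A|\,|B|\,|C|\pm\varepsilon|V_i|\,|V_j|\,|V_k|$ for all~$A\subseteq V_i$,~$B\subseteq V_j$,~$C\subseteq V_k$, where~$d_{ijk}$ is the density of~$P$ on~$V_i\times V_j\times V_k$); this is the usual energy-increment argument. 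Fix also a density threshold~$\delta>0$ and let the \emph{reduced palette}~$\cR=\cR(P)$ on colors~$[t]$ have as its patterns exactly the ordered triples~$(i,j,k)$ with~$i,j,k$ pairwise distinct for which~$(V_i,V_j,V_k)$ is~$\varepsilon$-regular of density at least~$\delta$. Deleting from~$P$ every pattern lying in an irregular triple, in a triple of density below~$\delta$, or in a ``diagonal'' triple (one with a repeated part, which in particular catches every degenerate pattern of~$P$) removes at most~$(32\varepsilon+\delta)n^3\le\alpha n^3$ patterns once~$\varepsilon,\delta$ are small enough in terms of~$\alpha$; call the resulting subpalette~$Q\subseteq P$, and note that by construction every pattern of~$Q$ lies in a regular, dense, non-diagonal triple, so~$Q$ is non-degenerate.

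The heart of the argument is a \emph{palette counting lemma}: if~$\cR(P)$ paints a $3$-graph~$F$, then~$P$ paints~$F$ in at least~$\beta\,n^{|\partial F|}$ ways, for some~$\beta=\beta(\varepsilon,\delta,v(F))>0$ and all~$n\ge N_0(\varepsilon,\delta,v(F))$. The key observation is that, for a fixed ordering~$<$ of~$V(F)$, a painting of~$F$ is a coloring~$\phi\colon\partial F\to C(P)$ subject to one constraint per edge, the constraint of~$xyz$ (with~$x<y<z$) involving the three shadow-pairs~$xy,xz,yz$; the associated constraint hypergraph on the vertex set~$\partial F$ is \emph{linear}, since any two distinct edges of~$F$ share at most one pair. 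Hence counting paintings amounts to counting homomorphic copies of a linear ordered $3$-uniform hypergraph in~$P$, and for linear hypergraphs weak regularity already suffices: starting from the painting of~$F$ supplied by~$\cR(P)$ --- which places the three pairs of each edge in pairwise-distinct parts, because~$\cR(P)$ is non-degenerate --- one lifts it pair-by-pair, choosing for each shadow-pair a color in its prescribed part, and since all relevant triples are~$\varepsilon$-regular of density at least~$\delta$ and the constraints form a linear hypergraph, a weak-regularity counting lemma in the spirit of Kohayakawa--Nagle--R\"odl--Schacht (adapted to the ordered palette setting) yields~$\gtrsim(\delta/2)^{e(F)}n^{|\partial F|}$ valid liftings, giving the claim with, say,~$\beta\approx\tfrac12(\delta/2)^{\binom{v(F)}{3}}$.

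To handle infinite~$\cF$ I would use the following finiteness trick. There are only finitely many palettes on at most~$T_0$ colors; for each such palette~$\cR_0$ that paints at least one member of~$\cF$, fix~$F_{\cR_0}\in\cF$ with~$v(F_{\cR_0})$ minimum, and set~$M:=\max v(F_{\cR_0})$ over these finitely many~$\cR_0$ (and~$M:=3$ if there are none). Take~$\beta$ from the counting lemma applied to $3$-graphs on at most~$M$ vertices and~$N\ge N_0$ correspondingly large; the order of choices is~$\alpha\rightsquigarrow\varepsilon,\delta\rightsquigarrow T_0\rightsquigarrow M\rightsquigarrow\beta,N$, so there is no circularity. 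Now assume~$P$ paints every~$F\in\cF$ with~$v(F)\le M$ in fewer than~$\beta n^{|\partial F|}$ ways, and form~$Q$ as above. If~$Q$ painted some~$F\in\cF$, say via an ordering~$<$ and coloring~$\chi\colon\partial F\to C(Q)$, then composing~$\chi$ with the map sending a color to its part and using that every pattern of~$Q$ lies in a regular, dense, non-diagonal triple, one checks that~$(<,\chi)$ projects to a painting of~$F$ by~$\cR(P)$; hence~$\cR(P)$ is one of the palettes~$\cR_0$ above, so~$F_{\cR_0}$ exists, has~$v(F_{\cR_0})\le M$, and is painted by~$\cR(P)$. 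The counting lemma then forces~$P$ to paint~$F_{\cR_0}$ in at least~$\beta n^{|\partial F_{\cR_0}|}$ ways, with~$|\partial F_{\cR_0}|\le\binom{M}{2}$, contradicting the hypothesis. Therefore~$Q$ is~$\cF$-deficient and~$|P\setminus Q|\le\alpha n^3$, as required.

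I expect the main obstacle to be the counting lemma: one must verify that \emph{weak} palette regularity really is enough to count paintings, which rests on the observation that the pattern constraints coming from any $3$-graph form a linear hypergraph, and one must push this through in the palette language while cleanly discarding the numerous but harmless degenerate patterns. The regularity lemma and the finiteness trick are routine adaptations of the corresponding graph statements.
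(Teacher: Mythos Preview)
Your overall strategy---weak regularity, a reduced palette, a cleaned subpalette~$Q$, a counting lemma exploiting that the shadow constraints form a \emph{linear} hypergraph, and a compactness step over the finitely many palettes on~$[t]$---matches the paper's proof almost exactly. In particular, your key observation that counting paintings reduces to counting copies of a linear $3$-graph (since two edges of~$F$ share at most one shadow pair) is precisely what the paper uses via Lemma~\ref{lem:linear_counting} and Lemma~\ref{lem:palette_counting}.

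However, there is a genuine gap in your parameter hierarchy. You write ``the order of choices is~$\alpha\rightsquigarrow\varepsilon,\delta\rightsquigarrow T_0\rightsquigarrow M\rightsquigarrow\beta,N$, so there is no circularity,'' but this is not correct. The counting lemma for linear hypergraphs (Lemma~\ref{lem:linear_counting}, or the KNRS lemma you invoke) requires the regularity parameter~$\varepsilon$ to be small \emph{as a function of the number of vertices} of the linear hypergraph being counted---here, $|\partial F|\le\binom{M}{2}$. So to get a positive~$\beta$ you need~$\varepsilon\le\varepsilon_0(\delta,M)$. But in your scheme~$M$ is determined only after~$T_0=T_0(\varepsilon)$ is fixed, and~$M$ can be arbitrarily large compared to~$1/\varepsilon$ (it depends on~$\cF$ and on which small palettes paint members of~$\cF$). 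You therefore cannot guarantee that your fixed~$\varepsilon$ is small enough for the counting lemma to apply to~$F_{\cR_0}$.

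The paper breaks this circularity not with the plain regularity lemma (Theorem~\ref{thm:palette_regularity}) but with the AFKS-style iterated version, Corollary~\ref{cor:iterated_palette_regularity_app}. That corollary produces, in addition to the equipartition~$\{V_i\}_{i\in[t]}$, a family of large subsets~$U_i\subseteq V_i$ such that \emph{every} ordered triple~$(U_i,U_j,U_k)$ is~$\cE(t)$-regular, where~$\cE$ is a function chosen in advance. The paper sets~$\cE(t)=\varepsilon_{\ref{lem:palette_counting}}\bigl(1/2,\,2\alpha/9,\,\binom{\fv_{\cF}(t)}{2}\bigr)$, where~$\fv_{\cF}(t)$ is the compactness bound computed from palettes on~$[t]$. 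Thus the regularity on the~$U_i$ is automatically fine enough to count any~$F'$ produced by the compactness step, whatever~$t$ turns out to be. Once you replace your single application of weak regularity by this iterated version (and run the counting on the~$U_i$ rather than the~$V_i$), your argument goes through verbatim.
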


        The proof of Lemma~\ref{lem:palette_removal} is given at the conclusion of this Section. Similar to the proof of Lemma~\ref{lem:graph_removal} for graphs, it will require a regularity theory for palettes. In some ways it is helpful to consider a palette merely as essentially an oriented~$3$-graph, since the number of degenerate patterns is~$O(n^2)$. 
        Let us define what it means for sets of colors in a palette to be~$\epsilon$-regular.
        Let~$P$ be a palette and suppose that~$W_1,W_2,W_3 \subseteq C(P)$ are non-empty.
        We set~$E(W_1,W_2,W_3)=(W_1\times W_2\times W_3)\cap P$ and~$e(W_1,W_2,W_3)=\vert E(W_1,W_2,W_3)\vert$\footnote{For ease of notation we suppress the dependency on~$P$ when it is clear from the context.}.
		Then the density of~$P$ induced on~$(W_1,W_2,W_3)$ is given by $$d(W_1,W_2,W_3) := \frac{e(W_1,W_2,W_3)}{|W_1||W_2||W_3|}\,.$$
        
	\begin{dfn}\label{dfn:palette_regularity}
		We call~$(V_1,V_2,V_3)$~$\epsilon$-regular if, for all~$W_1 \subseteq V_1,W_2 \subseteq V_2,W_3 \subseteq V_3$ with~$|W_i| \geq \epsilon |V_i|$,~$i\in[3]$, we have $$|d(W_1,W_2,W_3)-d(V_1,V_2,V_3)| \leq \epsilon\,.$$
	\end{dfn}
    
        As discussed above, the most important feature of the above definition is that it is sensitive to order, and the regularity of~$(V_1,V_2,V_3)$ has no bearing on the regularity of~$(V_2,V_1,V_3)$.
        However, we can derive many properties of these regular color sets by applying corresponding results in the unoriented hypergraph setting (see Lemma \ref{lem:palette_counting} below), so we give this definition as well.
        Let~$H=(V,E)$ be a~$3$-graph and suppose that~$X_1,X_2,X_3 \subseteq V$ are non-empty.
        We set $$E(X_1,X_2,X_3)=\{xyz\in E:\:x\in X_1,y\in X_2,z\in X_3\}$$ and~$e(X_1,X_2,X_3)=\vert E(X_1,X_2,X_3)\vert$.
		Then the density of~$H$ induced on~$X_1,X_2,X_3$ is given by $$d(X_1,X_2,X_3) := \frac{e(X_1,X_2,X_3)}{|X_1||X_2||X_3|}\,.$$

        \begin{dfn}\label{dfn:hypergaph_regularity}
		Suppose~$H=(V,E)$ is a~$3$-graph and that~$X_1,X_2,X_3 \subseteq V(H)$ are non-empty.
		We call~$X_1,X_2,X_3$ $\epsilon$-regular if, for all~$Y_1 \subseteq X_1,Y_2 \subseteq X_2,Y_3 \subseteq X_3$ with~$|Y_i| \geq \epsilon |X_i|$,~$i\in[3]$, we have $$|d(Y_1,Y_2,Y_3)-d(X_1,X_2,X_3)| \leq \epsilon\,.$$
	\end{dfn}

	As usual we will be interested in partitioning~$C(P)$ into a large (but bounded) number of parts so that most~$(V_i,V_j,V_k)$ are regular, so we also need the standard notions of equipartitions and refinements.
	\begin{dfn}
		Given a set~$C$, an equipartition~$\cA$ of~$C$ is a partition~$C=\bigdcup_{i\in[t]} V_i$, so that~$| |V_i| - |V_{i'}|| \leq 1$ for all $i,i' \in [t]$.
		A refinement of~$\cA$ is an equipartition~$\cB = \bigdcup_{i\in[t]}\bigdcup_{j\in[\ell]}V_{i,j}$ with~$V_{i,j} \subseteq V_i$ for all~$i \in [t]$ and~$j \in [\ell]$.
        We also identify~$\cA$ with the family of partition classes, i.e.,~$\cA=\{V_i:\:i\in[t]\}$.
	\end{dfn}

    Our palette analogue of the well-known Szemer\'edi regularity lemma is the following.

	\begin{theorem}\label{thm:palette_regularity}
		For all~$\epsilon>0$ and~$m \in \NN$ there exist~$M=M_{\ref{thm:palette_regularity}},N=N_{\ref{thm:palette_regularity}} \in \NN$ so that given any palette~$Q$ with~$c(Q) \geq N$ there is an equipartition~$\cA = \{V_i : i \in [t] \}$ of~$C(Q)$ so that
		\begin{enumerate}
			\item $m \leq t \leq M$ and
			\item the ordered triple~$(V_i,V_j,V_k)$ is~$\epsilon$-regular for all but~$\epsilon t^3$ of~$(i,j,k) \in [t]^3$.
		\end{enumerate}
        Moreover, given some equipartition~$\cA_0$ of~$C(Q)$ with at most~$m$ parts, there is an~$\cA$ as above which refines~$\cA_0$.
	\end{theorem}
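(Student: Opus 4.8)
The plan is to transcribe the classical energy–increment proof of Szemer\'edi's regularity lemma to the setting of \emph{ordered} triples of colour classes. The only new feature compared with the usual ``weak'' $3$-uniform version is that the density $d(V_i,V_j,V_k)$ depends on the order of $(i,j,k)$; since the mean-square argument never uses symmetry, this causes no difficulty. As a preliminary reduction, note that a palette on $n$ colours has at most $3n^2$ degenerate patterns, so whenever $W_1,W_2,W_3\subseteq C(Q)$ satisfy $|W_i|\ge\delta n$ the degenerate patterns contribute at most $3n^2/(\delta n)^3=O(\delta^{-3}/n)$ to $d(W_1,W_2,W_3)$. Every density occurring in the argument below will involve sets of size $\Omega_M(n)$, so we may delete all degenerate patterns from $Q$ and prove the statement for non-degenerate palettes (running the argument with $\epsilon/2$ in place of $\epsilon$), which we do tacitly from now on. For an equipartition $\cA=\{V_1,\dots,V_t\}$ of $C(Q)$ define its \emph{index}
\begin{align*}
	q(\cA):=\frac1{n^{3}}\sum_{(i,j,k)\in[t]^{3}}|V_i|\,|V_j|\,|V_k|\;d(V_i,V_j,V_k)^{2}\in[0,1]\,.
\end{align*}

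The core is a defect Cauchy--Schwarz inequality for ordered triples: if $(V_i,V_j,V_k)$ is \emph{not} $\epsilon$-regular, as witnessed by sets $W_i\subseteq V_i$, $W_j\subseteq V_j$, $W_k\subseteq V_k$ with $|W_\bullet|\ge\epsilon|V_\bullet|$, then replacing $V_i$ by $\{W_i,V_i\setminus W_i\}$ and likewise $V_j,V_k$ increases the contribution of the block $V_i\times V_j\times V_k$ to $q$ by at least $\epsilon^{5}|V_i||V_j||V_k|/n^{3}$; this is the standard conditional-variance computation carried out coordinatewise, and it is blind to the ordering of $(i,j,k)$. Moreover the block sub-index $\frac1{n^3}\sum_{a,b,c}|V_i^{a}||V_j^{b}||V_k^{c}|\,d(V_i^{a},V_j^{b},V_k^{c})^{2}$ is non-decreasing under any refinement of the three classes (tower property of conditional expectation). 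Now suppose $\cA$ violates clause~(2), so at least $\epsilon t^{3}$ ordered triples $(i,j,k)$ are irregular. Fix a witness for each, and refine every class $V_i$ by the common refinement, inside $V_i$, of all witnessing subsets in which $V_i$ participates in any of its three coordinate roles; this cuts $V_i$ into at most $2^{3t^{2}}$ pieces and produces a refinement $\cA'$ of $\cA$. By the monotonicity just stated, $q(\cA')$ exceeds $q(\cA)$ by at least the sum over the irregular triples of their individual increments, hence (using $|V_\bullet|\ge n/t-1$ and $n\ge2t$) by at least $\epsilon t^{3}\cdot\epsilon^{5}\cdot\frac1{8t^{3}}=\epsilon^{6}/8$. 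Finally we re-equipartition $\cA'$ by the usual device (chop each part into blocks of a common size and absorb the bounded leftover); for $n\ge N$ this costs at most $O(|\cA'|^{2}/n)\le\epsilon^{6}/16$ in the index and, since $\cA'$ already refines $\cA$, the bookkeeping can be arranged to keep the refinement relation. Thus one obtains an equipartition $\cA''$ refining $\cA$ with $q(\cA'')\ge q(\cA)+\epsilon^{6}/16$ and $|\cA''|\le|\cA|\cdot2^{3|\cA|^{2}}$.

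With this increment step the proof is the standard loop. In the plain case start from an arbitrary equipartition $\cA_1$ into exactly $m$ parts; for the ``moreover'' part start instead from the partition $\cB_0$ obtained by taking the common refinement of $\cA_0$ with an auxiliary $m$-part equipartition and then re-equipartitioning (a routine operation on a partition with boundedly many parts, which can be carried out so as to refine $\cA_0$ because $\cA_0$ is itself an equipartition), so that $\cB_0$ refines $\cA_0$ and has between $m$ and $O(m^{2})$ parts. Iterate: while clause~(2) fails, replace the current partition by the refinement $\cA''$ furnished above, gaining at least $\epsilon^{6}/16$ in the index each time. Since $q\le1$ the loop runs at most $16\epsilon^{-6}$ times and then outputs an equipartition satisfying~(2); its number of parts is bounded by a tower of height $O(\epsilon^{-6})$ over the starting number of parts, which defines $M=M_{\ref{thm:palette_regularity}}$, and we choose $N=N_{\ref{thm:palette_regularity}}$ large enough that the total of the at most $16\epsilon^{-6}$ re-equipartitioning losses stays below $\epsilon^{6}/16$ and every set $W_\bullet$ that ever arises has size $\Omega_M(n)$, which justifies the suppression of degenerate patterns. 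Each partition in the loop refines its predecessor, so the output refines $\cA_1$ (whence $t\ge m$) and, in the ``moreover'' case, refines $\cA_0$, as required.

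The only point that genuinely needs care is the interplay between orderedness and the refinement step: a single class $V_i$ typically must be split simultaneously according to the witnesses for many different irregular triples, and in those triples $V_i$ may sit in the first, second, or third coordinate. The resolution is routine — take the common refinement of all these splits inside $V_i$, which still has a bounded number of pieces, and invoke the monotonicity of the block sub-index to see that the increments coming from distinct irregular triples genuinely add up rather than interfere. Apart from this bookkeeping, the argument is word for word the classical energy-increment proof, with ordered triples of colour classes playing the role that (unordered) pairs or triples play there.
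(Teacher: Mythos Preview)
Your proposal is correct and follows essentially the same energy-increment approach as the paper: both define the mean-square index $q(\cA)=\frac{1}{n^3}\sum_{(i,j,k)}|V_i||V_j||V_k|\,d(V_i,V_j,V_k)^2$, show via the defect Cauchy--Schwarz/variance computation that $q$ jumps by $\Omega(\epsilon^6)$ whenever $\epsilon t^3$ ordered triples are irregular, and iterate $O(\epsilon^{-6})$ times. The only differences are bookkeeping --- the paper carries an exceptional set $V_0$ throughout and redistributes it at the very end via a ``small-perturbation preserves regularity'' lemma, whereas you re-equipartition at each step and absorb the loss by taking $N$ large --- and your preliminary deletion of degenerate patterns, which is harmless but unnecessary since the index is defined directly on ordered triples.
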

	In our application we need a strengthening of this; in particular, the $\epsilon$-regularity obtained should be allowed to depend on the number of parts $t$ in the partition, as follows.
  
	\begin{cor}\label{cor:iterated_palette_regularity_app}
		For all non-increasing maps~$\mathcal{E}: \NN \to (0,1]$ and~$m\in\mathds{N}$ there are~$M=M_{\ref{cor:iterated_palette_regularity_app}},N=N_{\ref{cor:iterated_palette_regularity_app}}$ and $\delta= \delta_{\ref{cor:iterated_palette_regularity_app}} >0$ so that given any palette~$Q$ with $c(Q)=n\geq N$ there is are an equipartition~$\cA=\{V_i : i \in [t] \}$ of $C(Q)$ and an equipartition~$\cA' = \{U_i : i \in [t] \}$ of some subset of~$C(Q)$ so that:
		\begin{enumerate}[label = \rmlabel]
			\item\label{it:num_parts} $m\leq t \leq M$,
			\item\label{it:model_size} $U_i  \subseteq V_i$ with $|U_i  | \geq \delta n$,
			\item\label{it:model_regularity} the ordered triple~$(U_{i},U_j,U_k)$ is $\cE(t)$-regular for all~$(i,j,k) \in [t]^3$, and
			\item\label{it:model_accuracy} we have $|d(U_i,U_j,U_k)-d(V_i,V_j,V_k)|< \cE(0)$ for all but $\cE(0)t^3$ of $(i,j,k) \in [t]^3$.
		\end{enumerate}
	\end{cor}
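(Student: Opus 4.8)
The plan is to deduce Corollary~\ref{cor:iterated_palette_regularity_app} from Theorem~\ref{thm:palette_regularity} by an Alon--Fischer--Krivelevich--Szegedy style iteration that produces a ``robust'' coarse equipartition $\cA$ together with a regular refinement $\cB$ of it, and then by sampling one class of $\cB$ inside each class of $\cA$. For the first part, given the non-increasing map $\mathcal{E}$ and the integer $m$, I would set $\eta:=\mathcal{E}(0)^4/5$ and, for each integer $t\ge m$, $\epsilon(t):=\tfrac1{10}\min\{\mathcal{E}(t),t^{-3}\}$; starting from an arbitrary equipartition $\cA_0$ of $C(Q)$ into $m$ classes I would build partitions iteratively, obtaining from $\cA_i$ (with $t_i$ classes) a refinement $\cA_{i+1}$ by applying Theorem~\ref{thm:palette_regularity} with parameters $\epsilon(t_i)$ and $t_i$ (and the ``moreover'' clause with $\cA_0=\cA_i$), so that $\cA_{i+1}$ has $t_i\le t_{i+1}\le M_{\ref{thm:palette_regularity}}(\epsilon(t_i),t_i)$ classes and is $\epsilon(t_i)$-regular on all but $\epsilon(t_i)t_{i+1}^3$ ordered triples of classes. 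Writing $q(\cA):=|\cA|^{-3}\sum_{(i,j,k)}d(V_i,V_j,V_k)^2$ for the mean-square density, convexity gives $q(\cA_{i+1})\ge q(\cA_i)$, and since $q\in[0,1]$ there is a smallest index $i$ with $q(\cA_{i+1})-q(\cA_i)<\eta$, necessarily $i\le 1/\eta$; I would then take $\cA:=\cA_i$ (with $t:=t_i$ classes) and $\cB:=\cA_{i+1}$ (with $s:=t_{i+1}$ classes). Then $t$ and $s$ are bounded by a constant $M$ depending only on $m$ and $\mathcal{E}$, the equipartition $\cB$ refines $\cA$ and is $\epsilon(t)$-regular on all but $\epsilon(t)s^3$ of its ordered triples, and crucially $q(\cB)-q(\cA)<\eta$; note that $\epsilon(\cdot)$ depends only on $\mathcal{E}$ and the current number of classes, which disarms the apparent circularity between the regularity parameter and the final number of parts.

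Next I would obtain $\cA'$ by choosing, for each $\ell\in[t]$, an independent uniformly random class $U_\ell$ of $\cB$ with $U_\ell\subseteq V_\ell$; these are pairwise disjoint and, for $n$ large, $|U_\ell|\ge n/(2M)=:\delta n$, which yields items~\ref{it:num_parts} and~\ref{it:model_size}. Since each $\cB$-class inside $V_\ell$ is equally likely and at most $\epsilon(t)s^3$ ordered $\cB$-triples are irregular, the expected number of triples $(i,j,k)$ with pairwise distinct entries for which $(U_i,U_j,U_k)$ fails to be $\epsilon(t)$-regular is at most $2\epsilon(t)t^3\le\tfrac15$; as $\epsilon(t)\le\mathcal{E}(t)$ and $\mathcal{E}$ is non-increasing, an $\epsilon(t)$-regular triple is $\mathcal{E}(t)$-regular, so with probability exceeding $\tfrac45$ all such triples are $\mathcal{E}(t)$-regular. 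For item~\ref{it:model_accuracy}, because $\cB$ refines the equipartition $\cA$ the density $d(V_i,V_j,V_k)$ equals, up to an $O(1/n)$ error, the average of $d(W,W',W'')$ over the $\cB$-subclasses $W\subseteq V_i$, $W'\subseteq V_j$, $W''\subseteq V_k$, hence the expectation of $|d(U_i,U_j,U_k)-d(V_i,V_j,V_k)|$ over the random choice is, up to $O(1/n)$, the mean absolute deviation $\sigma_{ijk}$ of these densities, and $t^{-3}\sum_{(i,j,k)}\sigma_{ijk}^2\le q(\cB)-q(\cA)<\eta$; by Cauchy--Schwarz $t^{-3}\sum_{(i,j,k)}\sigma_{ijk}<\sqrt\eta$, so two applications of Markov's inequality give that with probability exceeding $\tfrac12$ fewer than $\mathcal{E}(0)t^3$ triples have $|d(U_i,U_j,U_k)-d(V_i,V_j,V_k)|\ge\mathcal{E}(0)$. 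A union bound over these two failure events then produces a choice of $(U_\ell)_{\ell\in[t]}$, with $\cA':=\{U_\ell:\ell\in[t]\}$, realising all of~\ref{it:num_parts}--\ref{it:model_accuracy} except possibly on the degenerate triples addressed below.

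The step I expect to be the real obstacle is exactly those \emph{degenerate} triples in item~\ref{it:model_regularity}: Definition~\ref{dfn:palette_regularity} still demands that, say, $(U_i,U_i,U_k)$ be $\mathcal{E}(t)$-regular, yet there are only $O(t^2)$ ordered triples with a repeated entry, so Theorem~\ref{thm:palette_regularity} hides all of them in its $\epsilon t^3$ error term; the averaging above therefore handles them for item~\ref{it:model_accuracy} (their total count is $\ll\mathcal{E}(0)t^3$) but provides nothing towards item~\ref{it:model_regularity}. Dealing with this needs a separate regularisation of the ``pairs inside one class'' structure---for instance, rerunning the iteration of the first paragraph on the partition obtained from $\cA$ by splitting each class into two halves and transferring regularity of triples of \emph{distinct} classes of that partition to the diagonal triples of $\cA'$, or feeding an auxiliary tripartite palette on $C(Q)\times[3]$ into the regularity lemma. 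Everything else is routine concentration and averaging once the robust pair $(\cA,\cB)$ is at hand.
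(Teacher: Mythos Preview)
Your approach is essentially the paper's: an AFKS-style iteration of Theorem~\ref{thm:palette_regularity}, stopping at the first step where the mean-square density (the paper's energy function $q$) fails to increment by a fixed amount, followed by choosing one subclass $U_i\subseteq V_i$ of the refinement uniformly at random and verifying \ref{it:num_parts}--\ref{it:model_accuracy} by expectation and union bounds. The paper packages the iteration separately as Theorem~\ref{thm:iterated_palette_regularity} and replaces your Cauchy--Schwarz\,/\,Markov argument for \ref{it:model_accuracy} by a Chebyshev step (Lemma~\ref{lem:q_close}), but the content is the same.

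Your worry about the \emph{degenerate} triples in item~\ref{it:model_regularity} is legitimate, and in fact the paper's argument glosses over exactly the same point: the union bound there gives probability at most $\cE'(t)$ per triple only when $i,j,k$ are pairwise distinct, since for a triple like $(i,i,k)$ the random sample lands on the diagonal $\{(j,j,j'):j,j'\in[\ell]\}$, on which Theorem~\ref{thm:iterated_palette_regularity}\ref{it:epst} gives no direct control. However, this does not matter for the paper: in the sole application (Lemma~\ref{lem:palette_removal}) the reduced palette $R$ is built only from patterns with pairwise distinct indices (condition~\ref{it:cross}), so item~\ref{it:model_regularity} is only ever invoked on non-degenerate triples. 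The clean fix is therefore to state \ref{it:model_regularity} for pairwise distinct $(i,j,k)$; your proposed workarounds (splitting each class in two, or passing to an auxiliary tripartite palette on $C(Q)\times[3]$) would also work but are heavier than the application requires.
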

	The proof of Theorem~\ref{thm:palette_regularity} follows along the standard technique of iterated refinement used for graph and weak hypergraph regularity, and the method to obtain Corollary~\ref{cor:iterated_palette_regularity_app} from Theorem~\ref{thm:palette_regularity} 
	was developed in \cite{AFKS:00} for graph-testing problems. For the interested reader we include the proofs of Theorem~\ref{thm:palette_regularity} and Corollary~\ref{cor:iterated_palette_regularity_app} in Appendix~\ref{app:regularity}.

	Before we prove Lemma~\ref{lem:palette_removal}, we show a counting lemma for the number of ways in which a palette paints a~$3$-graph.
    We use the following result, which counts the number of copies of a linear hypergraph inside a regularly partitioned hypergraph.
    We state only the~$3$-uniform case.

        \begin{lemma}[\protect{\cite[Lemma 10]{countinglemmalinear}}]\label{lem:linear_counting}
            Given~$\gamma, d_0>0$ and~$\ell\in \mathds N$ there are~$\eps=\eps_{\ref{lem:linear_counting}}(\gamma,d_0,\ell)$ and~$N=N_{\ref{lem:linear_counting}}=(\gamma,d_0,\ell)$ such that the following holds. 
		Let~$F$ be a linear~$3$-graph with~$V(F)=[\ell]$ and~$H$ an~$\ell$-partite~$3$-graph on parts~$V_1,\dots,V_\ell$ with~$|V_i| \geq N$ for each~$i \in [\ell]$.
        Suppose that~$\{V_i \}_{i \in f}$ is~$\epsilon$-regular with density~$d_f \geq d_0$ for every~$f \in E(F)$.
        Then the number of copies of~$F$ in~$H$ that map each vertex~$i\in[\ell]$ to~$V_i$ is at least~$(1-\gamma)d_0^{e(F)}\prod_{i \in [\ell]} |V_{i}|$.
        \end{lemma}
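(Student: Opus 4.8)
The plan is to prove Lemma~\ref{lem:linear_counting} by the standard embedding (``counting lemma'') argument, in the version adapted to \emph{linear} $3$-graphs under weak (i.e.\ vertex-)regularity; this is the route of \cite{countinglemmalinear}, and I sketch only its shape. Fix $\gamma, d_0 > 0$ and $\ell$; one chooses $\eps > 0$ small compared with $\gamma$, $d_0$ and $1/\ell$, and then $N$ large compared with $1/\eps$.

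The first step is to record the one-sided consequence of Definition~\ref{dfn:hypergaph_regularity}: if $(V_a, V_b, V_c)$ is $\eps$-regular with density $d \geq d_0$, then for all $Y_a \subseteq V_a$, $Y_b \subseteq V_b$ with $|Y_a| \geq \sqrt\eps\,|V_a|$ and $|Y_b| \geq \sqrt\eps\,|V_b|$, all but at most $\sqrt\eps\,|V_c|$ vertices $z \in V_c$ satisfy
\begin{equation*}
	\bigl|\{(y_a, y_b) \in Y_a \times Y_b :\ y_a y_b z \in E(H)\}\bigr| \ \geq\ (d - \sqrt\eps)\,|Y_a|\,|Y_b|\,,
\end{equation*}
and symmetrically in the other two coordinates; this follows by applying $\eps$-regularity to $Y_a$, $Y_b$ and to the set of offending $z$. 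Call $z$ \emph{good} for $(V_a, V_b, V_c)$ relative to $(Y_a, Y_b)$ if it satisfies this bound.

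The second and main step is the embedding. Linearity of $F$ enters crucially through the fact that no two edges of $F$ share two vertices, equivalently that the link of every vertex of $F$ is a matching. One fixes an ordering of $V(F)$ and embeds the vertices one at a time, maintaining for each not-yet-embedded vertex a candidate set that is successively refined so that, whenever an edge of $F$ is about to be completed, its last vertex can still be chosen from a set of linear size. The model computation, which already exhibits the shape of the general estimate, is the loose path $F = \{\{1,2,3\},\{3,4,5\}\}$: here, writing $\deg(x_3; V_a, V_b)$ for the number of pairs $(x_a,x_b)\in V_a\times V_b$ with $x_ax_bx_3\in E(H)$,
\begin{equation*}
	\#\{\text{partite copies of }F\} \;=\; \sum_{x_3 \in V_3} \deg(x_3; V_1, V_2)\,\deg(x_3; V_4, V_5) \;\geq\; (1-2\sqrt\eps)(d_0-\sqrt\eps)^2\prod_{i\in[5]}|V_i| \;\geq\; (1-\gamma)\,d_0^{\,2}\prod_{i\in[5]}|V_i|\,,
\end{equation*}
using that all but a $2\sqrt\eps$-fraction of $x_3 \in V_3$ are simultaneously good for both triples. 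In general one processes the edges of $F$ in a suitable order and maintains the candidate sets so that each edge contributes a factor $d_0 - \sqrt\eps - o(1)$ to the count while only an $O(\sqrt\eps)$-fraction of choices is lost at each of the $\ell$ vertices; since $e(F) \leq \binom{\ell}{2}$, taking $\eps$ small and $N$ large yields at least $(1-\gamma) d_0^{e(F)} \prod_{i \in [\ell]} |V_i|$ partite copies.

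I expect the main obstacle to be precisely this second step. Unlike in the graph counting lemma, weak regularity of a triple $(V_a, V_b, V_c)$ says nothing about the codegree of an individual pair of $V_a \times V_b$ into $V_c$ (the non-linear configuration $\{\{1,2,3\},\{1,2,4\}\}$ already fails to be counted by weak regularity alone), so one cannot naively shrink a candidate set to the neighbourhood of an already-embedded pair; every application of regularity and every summation must be arranged against candidate sets that are still of linear size, never against a pinned pair. Designing an order of processing and a scheme of candidate-set refinements that guarantee this --- which is exactly where linearity of $F$ is exploited, and which is genuinely delicate beyond the sparse situation illustrated by the loose path --- is the technical heart of the argument, and I would carry out that bookkeeping following \cite{countinglemmalinear}.
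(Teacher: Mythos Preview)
The paper does not prove this lemma at all: it is quoted verbatim as \cite[Lemma~10]{countinglemmalinear} and used as a black box in the proof of Lemma~\ref{lem:palette_counting}. There is therefore no ``paper's own proof'' to compare against. Your sketch is a reasonable outline of the standard embedding argument for linear $3$-graphs under weak regularity (and your remark that linearity is essential because weak regularity gives no control over pair-codegrees is exactly the point), but for the purposes of this paper the correct move is simply to cite the external reference, as the authors do.
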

        By creating a hypergraph which captures the structure of~$P$,  we can obtain a similar statement estimating the number of ways in which~$P$ paints a given~$3$-graph~$F$. 
	\begin{lemma}\label{lem:palette_counting}
		Given~$\gamma, d_0>0$ and~$s\in \mathds N$ there are~$\eps=\eps_{\ref{lem:palette_counting}}(\gamma,d_0,s)$ and~$N=N_{\ref{lem:palette_counting}}=(\gamma,d_0,s)$ such that the following holds. 
		Let~$F$ be a~$3$-graph with~$|\partial F| = s$ and~$P$ be a palette whose colours are partitioned into~$C(P) = V_1\dcup \dots \dcup V_s$ where~$|V_i|\geq N$ for every~$i\in [s]$. 
		Suppose there is an ordering~$<$ of the vertices of~$F$ and a map~$\phi\colon \partial F \to [s]$ such that for each~$uvw\in E(F)$ with~$u<v<w$, the triple~$(V_{\phi(uv)}, V_{\phi(uw)}, V_{\phi(vw)})$ is~$\eps$-regular in~$P$ with density at least~$d_0$. 
		Then~$F$ is painted by~$P$ in at least~$(1-\gamma)d_0^{e(F)}\prod_{uv\in \partial F} |V_{\phi(uv)}|$ different ways.
	\end{lemma}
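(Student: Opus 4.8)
The plan is to reduce Lemma~\ref{lem:palette_counting} to Lemma~\ref{lem:linear_counting} by encoding a painting of $F$ as a copy of an auxiliary linear $3$-graph inside an auxiliary $3$-graph built from $P$. The key observation is that a painting $(\strictif,\chi)$ of $F$ assigns a color to each \emph{pair} of vertices of $F$, and the constraint~\eqref{eq:paints} only involves, for each edge $uvw$ of $F$ (with $u\strictif v\strictif w$), the three pairs $uv$, $uw$, $vw$. So the pairs of $F$ should play the role of vertices in the auxiliary hypergraph, and the edges of $F$ should become hyperedges. Concretely, fix the ordering $\strictif$ and the map $\phi\colon\partial F\to[s]$ from the hypothesis.

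First I would build the auxiliary $3$-graph. Its vertex set is $C(P) = V_1\dcup\dots\dcup V_s$ — but viewed as an $|\partial F|$-partite structure where the part indexed by a pair $uv\in\partial F$ is $V_{\phi(uv)}$ (so several parts may be literally the same set $V_i$; formally one takes disjoint copies, one per pair in $\partial F$ that $\phi$ sends to $i$, which changes nothing since regularity and density are preserved on identical copies). For each edge $uvw\in E(F)$ with $u\strictif v\strictif w$, put a hyperedge on the triple of parts indexed by $(uv,uw,vw)$ whenever the corresponding color-triple lies in $P$; that is, the hyperedge set restricted to these three parts is exactly $E(V_{\phi(uv)},V_{\phi(uw)},V_{\phi(vw)})$ in the palette sense. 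Let $F'$ be the linear $3$-graph on vertex set $\partial F$ whose hyperedges are the triples $\{uv,uw,vw\}$ for $uvw\in E(F)$. I would check that $F'$ is linear: two distinct edges $uvw$, $u'v'w'$ of $F$ give hyperedges of $F'$ sharing at most one vertex, because two distinct triples of vertices share at most two common vertices, hence at most one common pair. Then $e(F')=e(F)$, $v(F')=|\partial F|=s$, and a copy of $F'$ in the auxiliary hypergraph mapping the vertex $uv$ into its designated part $V_{\phi(uv)}$ is exactly a choice of a color for each pair of $F$ such that every edge-constraint~\eqref{eq:paints} is satisfied — i.e.\ a map $\partial F\to C(P)$ witnessing (together with $\strictif$) that $P$ paints $F$.

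Next I would apply Lemma~\ref{lem:linear_counting} with the same $\gamma$, $d_0$, and $\ell = s$, obtaining $\eps_{\ref{lem:linear_counting}}$ and $N_{\ref{lem:linear_counting}}$; I set $\eps_{\ref{lem:palette_counting}} = \eps_{\ref{lem:linear_counting}}(\gamma,d_0,s)$ and $N_{\ref{lem:palette_counting}} = N_{\ref{lem:linear_counting}}(\gamma,d_0,s)$. The hypothesis of Lemma~\ref{lem:palette_counting} — that each $(V_{\phi(uv)},V_{\phi(uw)},V_{\phi(vw)})$ is $\eps$-regular with density at least $d_0$ in $P$ — translates verbatim into the hypothesis of Lemma~\ref{lem:linear_counting} for the auxiliary hypergraph and $F'$, since the $\eps$-regularity of an ordered triple of color sets (Definition~\ref{dfn:palette_regularity}) for a fixed ordered palette is literally the (unordered) $\eps$-regularity (Definition~\ref{dfn:hypergaph_regularity}) of the corresponding parts in the auxiliary $3$-graph: the order of the triple is already baked into which hyperedges we placed. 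The conclusion of Lemma~\ref{lem:linear_counting} gives at least $(1-\gamma)d_0^{e(F')}\prod_{uv\in\partial F}|V_{\phi(uv)}| = (1-\gamma)d_0^{e(F)}\prod_{uv\in\partial F}|V_{\phi(uv)}|$ copies of $F'$, each of which is a distinct painting map $\partial F\to C(P)$ of the required form, so $F$ is painted by $P$ in at least this many ways.

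The only genuinely delicate point — and the place I would be most careful — is the bookkeeping around the map $\phi$ possibly being non-injective on $\partial F$, i.e.\ distinct pairs of $F$ receiving the same part index. One must make sure that a painting counted by Definition~\ref{dfn:paint_count} (a map on $\partial F$) corresponds bijectively to a copy of $F'$ (a map on $V(F')=\partial F$) and that the product $\prod_{uv\in\partial F}|V_{\phi(uv)}|$, rather than $\prod_{i\in[s]}|V_i|$, is the right normalization — this is exactly what Lemma~\ref{lem:linear_counting} produces once one passes to the $|\partial F|$-partite structure indexed by pairs. I do not expect any real obstacle beyond this translation; the substance is entirely in Lemma~\ref{lem:linear_counting}.
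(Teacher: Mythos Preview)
Your proposal is correct and follows essentially the same approach as the paper: both build the linear $3$-graph $F^\partial$ on vertex set $\partial F$ with edges $\{uv,uw,vw\}$ for $uvw\in E(F)$, take disjoint copies of the relevant color classes indexed by $\partial F$ to form an auxiliary $3$-partite-per-edge hypergraph, and apply Lemma~\ref{lem:linear_counting} directly. The paper handles the non-injectivity of $\phi$ exactly as you anticipate, by taking a separate copy $V_{\phi(uv)}^{uv}$ for each pair $uv\in\partial F$ and then projecting back to $C(P)$ to recover distinct paintings.
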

	
	\begin{proof}
		We write~$uvw$ for~$\{u,v,w\}$ whenever~$u < v < w$ for~$u,v,w \in V(F)$.
        Let~$F^\partial$ be a~$3$-graph with vertex-set~$\partial F$ and all edges of the form~$\{uv,uw,vw\}$ where~$uvw\in E(F)$.
		Note that~$F^{\partial}$ is linear and~$e(F) = e(F^\partial)$.
        For each~$uv \in \partial F$ let~$V_i^{uv}$ be a copy of~$V_i$, and label the copy of~$a \in V_i$ as~$a^{uv} \in V_i^{uv}$.
        Let~$H$ be a~$3$-graph with vertices~$V(H) = \bigdcup\limits_{\substack{uv \in \partial F \\ i\in [k]}}V_i^{uv}$ and edges
		$$E(H) = \{a^{uv}b^{uw}c^{vw} \in V(H)^{(3)}  \colon uv,uw,vw \in \partial F, \text{ and } (a,b,c)\in P\}\,.$$
        In words,~$H$ is obtained by taking~$s=|\partial F|$ copies of~$C(P)$, indexed by~$uv \in \partial F$, and then adding a~$3$-edge~$xyz$ only when~$x$,~$y$, and~$z$ are part of distinct copies and, when viewed as an ordered triple in~$C(P)$, they give a pattern in~$P$.
        Now if~$(V_i,V_j,V_k)$ is~$\epsilon$-regular in the palette~$P$ (Definition \ref{dfn:palette_regularity}), then the vertex sets~$(V_i^{uv},V_j^{uw},V_k^{vw})$ are~$\epsilon$-regular in~$H$ for each~$uv,uw,vw \in \partial F$ (Definition \ref{dfn:hypergaph_regularity}). 	
        Consider in particular the~$s$ sets given by~$V_{\phi(uv)}^{uv}$ for~$uv \in \partial F$.
        Since~$F^\partial$ is linear, Lemma~\ref{lem:linear_counting} yields at least~$(1-\gamma)d_0^{e(F^{\partial})}\prod_{uv \in\partial F}|V_{\phi(uv)}^{uv}|$ copies (or embeddings) of~$F^{\partial}$ in~$H$ that map each~$uv\in V(F^{\partial})$ to some vertex in~$V^{uv}_{\phi(uv)}$.
        Write~$\Psi$ for the collection of such embeddings.
        Let~$\chi:V(H) \to C(P)$ be the projection map which sends~$a^{uv}$ to~$\chi(a^{uv}) = a$.
        Note that for every~$\psi\in\Psi$, the tuple~$(<,\chi\circ\psi)$ is a painting of~$F$ using~$P$ and that any two distinct embeddings~$\psi,\psi'\in\Psi$ get projected to distinct paintings, whence the number of such paintings is at least~$(1-\gamma)d_0^{e(F)}\prod_{uv\in \partial F}|V_{\phi(uv)}|$.
	\end{proof}

	We are now ready to prove the removal lemma using the counting lemma.

	\begin{proof}[Proof of Lemma \ref{lem:palette_removal}]

		Let~$\cP_t(\cF)$ be the set of all palettes with color set~$[t]$ which paint at least one~$F \in \cF$.
		
		We define the map~$\fv_{\cF}\colon \mathds N \to \mathds N_{\geq0}$ by
		\begin{align*}
			\fv_{\cF}(t) 
			&= 
			\max_{R\in \cP_t(\cF)}
			\min \{|V(F)| \colon F\in \cF \text{ and } R \text{ paints }F\}       
		\end{align*}
        (and~$\fv_{\cF}(t)=0$ if~$\cP_t(\cF)=\emptyset$).
        Since~$\cP_t(\cF)$ is finite for every~$t$, the maximum exists.
		The idea is the following.
        Given a palette~$P$ that paints every small~$F\in\cF$ in few ways, we apply the palette regularity lemma.
        From this we obtain a ``reduced'' palette~$R$ with a constant number of colors~$t$ and a ``cleaned'' palette~$Q$ similar to~$P$.
        If~$Q$ would still paint some~$F\in\cF$, then~$R$ would paint~$F$ and hence - using the definition of~$\fv_{\cF}$ - some~$F'\in\cF$ with few vertices.
        Then the counting lemma entails that~$P$ must paint~$F'$ as well.
		Let us formalize this argument.
        
        Given~$\alpha>0$, define the function~$\cE\colon \mathds N_0 \to [0,1]$ by 
		$$\cE(t) = 
		\begin{cases}
			2\alpha/9  &\text{if~$t=0$ and} \\
			\epsilon_{\ref{lem:palette_counting}}(1/2,2\alpha/9, \binom{\fv_{\cF}(t)}{2}) \qquad &\text{if~$t\geq 1$}\,.
		\end{cases}$$
        Note that we may assume that the functions~$N_{\ref{lem:palette_counting}}(\gamma, d_0,s)$ and~$\epsilon_{\ref{lem:palette_counting}}(\gamma, d_0,s)$ are non-decreasing and non-increasing in~$s$, respectively.
		Let $M_{\ref{cor:iterated_palette_regularity_app}},N_{\ref{cor:iterated_palette_regularity_app}}, \delta=\delta_{\ref{cor:iterated_palette_regularity_app}}$ be the constants given by an application of Corollary~\ref{cor:iterated_palette_regularity_app} with~$\cE$ and
		\begin{align}\label{eq:m}
			m> 9/\alpha\,.   
		\end{align}
		Finally, take
		$$
		M = \max_{r\in [M_{\ref{cor:iterated_palette_regularity_app}}]}\fv_{\cF}(r)\,,
            \quad
            N = \max\big\{N_{\ref{cor:iterated_palette_regularity_app}}, \frac{1}{\delta} N_{\ref{lem:palette_counting}}\big(1/2,2\alpha/9, \binom{M}{2}\big)\big\}\,,
		\quad
		\beta \leq \frac{(\frac{2\alpha}{9})^{\binom{M}{3}}\delta^{\binom{M}{2}}}{2}\,.$$
		
		Now we argue that these choices of~$M$,~$N$, and~$\beta$ have the desired property.
        Given a palette~$P$ with~$n\geq N$ colors, apply (the conclusion of) Corollary~\ref{cor:iterated_palette_regularity_app} to obtain equipartitions~$\cA=\{V_i : i \in [t] \}$ and~$\cA' = \{U_i : i \in [t] \}$ satisfying \ref{it:num_parts}-\ref{it:model_accuracy}.
        Note that in particular,~$\vert U_i\vert\geq\delta N\geq N_{\ref{lem:palette_counting}}\big(1/2,2\alpha/9, \binom{M}{2}\big)$.
		We produce a `reduced' palette~$R$ with~$C(R)=[t]$ by including the pattern~$(i,j,k)$ in~$R$ if 
		\begin{enumerate}[label=\nlabel]
			\item\label{it:cross} $i,j,k$ are pairwise distinct,
			\item\label{it:similar} $|d(U_{i},U_{j},U_{k})-d(V_{i},V_{j},V_{k})| \leq \cE(0)=2\alpha/9$, and
			\item\label{it:density} $d(U_{i},U_{j},U_{k}) > \cE(0)=2\alpha/9$.
		\end{enumerate}
		Let~$Q\subseteq P$ be the `cleaned' version of~$P$, where we delete all edges $(a,b,c)\in P$ with $a \in V_{i},b \in V_{j},c \in V_{k}$ when~$(i,j,k)\notin R$.
		In this way, it is easy to see that~$Q$ is contained in a blow-up of~$R$.
		
		As in many other applications of the regularity lemma, it is not hard to check that
		\begin{align}
			|P\setminus Q| \leq \alpha n^3\,.
		\end{align}
		Indeed, there are at most~$3t^2$ triplets of indices not satisfying~\ref{it:cross}. 
		Thus, due to~\eqref{eq:m}, at most~$3t^2(n/t)^3 \leq 3n^3/t \leq \alpha n^3/3$ patterns are deleted in this way. 
		By Corollary~\ref{cor:iterated_palette_regularity_app} Part~\ref{it:model_accuracy} there are at most~$\cE(0)t^3 = 2\alpha t^3/9$ triplets~$(i,j,k)$ such that~$|d(U_i,U_j,U_k)-d(V_i,V_j,V_k)|> \cE(0)$, meaning that at most~$n^3/t^3\cdot 2\alpha t^3/9\leq 2\alpha n^3/9$ patterns need to be deleted to ensure~\ref{it:similar}. 
		Finally, using that for the remaining triplets of indices~\ref{it:similar} holds, in~\ref{it:density} we delete at most~$4\alpha n^3/9$ edges.
		
		Suppose for a contradiction that~$Q$ paints a hypergraph ~$F \in\cF$. 
		Since~$Q$ is contained in a blow-up of~$R$, it follows that~$R$ paints~$F$ as well.
        Therefore, keeping in mind that~$c(R)=t\leq M_{\ref{cor:iterated_palette_regularity_app}}$, there is some hypergraph~$F' \in \cF$ painted by~$R$ with the additional property that~$v(F') \leq \fv_{\cF}(t) \leq M$.
		Let~$s=|\partial F'|\leq \binom{M}{2}$ and let~$\phi :\partial F'\to C(R) = [t]$ be the coloring given by definition of painting (we leave the vertex ordering to be implicit).
		Due to~\ref{it:cross}-\ref{it:density} and~\ref{it:model_regularity} in Corollary~\ref{cor:iterated_palette_regularity_app}, the map~$\phi$ satisfies the conditions of Lemma~\ref{lem:palette_counting}, when restricted to the subpalette~$U_1 \dcup \dots \dcup U_s$, which implies that~$Q$ paints~$F'$ in more than~$\beta n^s$ ways, a contradiction.
	\end{proof}

	\section{A Ramsey result}\label{sec:ramsey}
	
	Given two distinct $3$-graphs $G$ and $H$ such that neither is a subgraph of the other, there always exists a $3$-graph $F$ with the property that $G$ contains a copy of $F$, but $H$ is $F$-free.
	Indeed, one can take the $3$-graph $F$ to be $G$ itself.
	It is somewhat natural to ask if the same is true for palettes.
	That is, for what pairs of palettes~$P$ and~$Q$, does there exist a~$3$-graph~$F$ such that~$P$ paints~$F$, but~$Q$ does not paint~$F$?
	The goal of this section is to answer this question.
	We remark that similar considerations were mentioned in \cite{L:24}. 
	
	Given a palette $P$ on $n$ colors, recall that the \emph{reverse palette} $\rev(P)$ of $P$ as 
	\begin{align*}
		\rev(P)=\{(c,b,a):\: (a,b,c)\in P\}\,,
	\end{align*}
	that is, $\rev(P)$ is the palette obtained by reversing the order of the patterns of $P$.
	Note that a palette $P$ paints a $3$-graph $F$ if and only if $\rev(P)$ paints $F$.
	Indeed, this can be seen by taking the ordering of the vertices in which $P$ paints $F$ and reversing it.
	A consequence of this observation is that no graph can distinguish a palette $P$ from $\rev(P)$.
	It turns out that up to taking blow-ups,~$\rev(P)$ is the only palette for which there is no~$3$-graph that distinguishes it from~$P$.
	We remind the reader that a palette $Q$ is contained in a blow-up of $P$ if there exists a homomorphism $\psi:Q\rightarrow P$. The next lemma is the main result in this section.
	
	\begin{lemma}\label{lem:distinguish_palette}
		Let $P$ and $Q$ be palettes such that $Q$ is not contained in a blow-up of $P$ nor in a blow-up of $\rev(P)$.
		Then there exists a $3$-graph $F$ such that $P$ is $F$-deficient and~$Q$ paints $F$.
	\end{lemma}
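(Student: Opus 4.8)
The plan is to reduce the statement to a Ramsey-type fact about paintings, which I would then establish via the partite construction of Ne\v{s}et\v{r}il and R\"odl. First the easy reductions: since a palette paints a $3$-graph if and only if its reverse does, and since ``$Q$ is contained in a blow-up of $P$'' is equivalent to the existence of a homomorphism $Q\to P$ (as observed in Section~\ref{sec:intropalettes}), while a homomorphism $Q\to\rev(P)$ is the same thing as a homomorphism $\rev(Q)\to P$, the hypothesis amounts to saying that there is no homomorphism $Q\to P$ and none from $\rev(Q)$ to $P$. Hence it suffices to produce a single $3$-graph $F$ — which may depend on $Q$ and on $c(P)$ — such that $Q$ paints $F$ while every palette $R$ with $c(R)\le c(P)$ that paints $F$ admits a homomorphism $Q\to R$ or $\rev(Q)\to R$. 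Applying this with $R=P$ forces $P$ to be $F$-deficient, and $Q$ paints $F$ by construction, which is the lemma.

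To set this up I would fix an ordered \emph{template} $(T,\strictif_T)$: the disjoint union over the patterns $(a,b,c)\in Q$ of ordered triples $x\strictif_T y\strictif_T z$ with $xyz$ an edge, equipped with a partition of $V(T)^{(2)}$ into ``colour classes'' $S_c$ ($c\in C(Q)$) and one irrelevant class, where the pair $\{x,y\}$ of the $(a,b,c)$-gadget lies in $S_a$, $\{x,z\}$ in $S_b$ and $\{y,z\}$ in $S_c$. By construction $Q$ paints $T$ using $\strictif_T$ and the colouring sending $S_c$ to $c$; more importantly, for every palette $R$, any painting $(\strictif_T,\chi)$ of $T$ by $R$ with $\chi$ constant of value $h(c)$ on each $S_c$ satisfies $(h(a),h(b),h(c))\in R$ for every $(a,b,c)\in Q$, i.e.\ $h$ is a homomorphism $Q\to R$; reading $T$ with the reverse order $\strictif_T^{\mathrm{rev}}$ gives the analogous statement with $\rev(Q)$ in place of $Q$.

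The heart of the proof is to amplify $T$ to $F$ so that \emph{every} valid painting of $F$ — with an a priori arbitrary vertex ordering and an arbitrary colouring in at most $c(P)$ colours — contains a sub-structure that is isomorphic, as an ordered, class-labelled object, to $(T,\strictif_T)$ or to $(T,\strictif_T^{\mathrm{rev}})$ and on which the colouring is constant on each class. I would obtain this through the partite construction: starting from a partite presentation of $T$, perform a bounded sequence of partite amalgamations, one per pattern of $Q$ and per relevant relative order, invoking at each step Ramsey's theorem for pairs in $c(P)$ colours (to tame the pair-colouring) together with an Erd\H{o}s--Szekeres / ordered-Ramsey input (to tame the ordering — in particular to force the order induced on the surviving copy of $T$ to be $\strictif_T$ or its exact reverse, rather than some other permutation of its vertices; this is precisely what the ``or $\rev(Q)\to R$'' in the conclusion accounts for). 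Crucially, each amalgamation must be carried out \emph{over a fixed $Q$-painting of $T$}, so that a $Q$-painting of the current object lifts to the next one; since $T$ is $Q$-painted, the final $F$ ends up painted by $Q$.

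Given $F$, the lemma follows at once: were $P$ to paint $F$ via some $(\strictif,\chi)$, the amplification would yield a class-constant painted copy of $(T,\strictif_T)$ or $(T,\strictif_T^{\mathrm{rev}})$, hence a homomorphism $Q\to P$ or $\rev(Q)\to P$, contradicting the hypothesis; so $P$ is $F$-deficient, while $Q$ paints $F$. I expect the main obstacle to be this amplification step — running the partite construction so that it simultaneously controls the unconstrained vertex order (up to global reversal), controls the colouring via Ramsey, and keeps the object paintable by $Q$ through all the amalgamations. The order-control has no analogue in the classical Ne\v{s}et\v{r}il--R\"odl setting, and it is what forces the resulting $F$ (and hence the family $\cF$ of Theorem~\ref{thm:main}) to be enormous.
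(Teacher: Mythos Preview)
Your proposal is essentially correct and follows the same strategy as the paper: build a template of vertex-disjoint triangles, one per pattern of $Q$ with pairs labelled by colour class, and then amplify so that any painting by a palette on $c(P)$ colours must contain a copy of the template whose induced order is the original one or its reverse and whose colouring is constant on each class; this forces a homomorphism $Q\to P$ or $\rev(Q)\to P$, a contradiction.

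The main difference is organisational. You propose a single partite construction handling colouring and ordering simultaneously, whereas the paper decouples the two steps cleanly: first it proves a Ramsey statement for \emph{systems} of ordered graphs (Proposition~\ref{prop:ramseysystem}, obtained by iterating Theorem~\ref{thm:ramseysteiner} once per colour class of $Q$) yielding a graph $H$ in which any $c(P)$-colouring of the pairs contains a class-monochromatic copy of the template; then, separately, it constructs an ordered linear $k$-graph with $k=v(H)$ in which any reordering leaves some edge monotone (Proposition~\ref{prop:ordering}, itself proved via Theorem~\ref{thm:ramseysteiner} with $k!$ colours). The two are combined by placing a copy of $H$ on each edge of this $k$-graph and then \emph{defining} $F$ by declaring a triple to be an edge exactly when its pair-colours form a pattern of $Q$; this last move makes ``$Q$ paints $F$'' automatic and avoids having to thread $Q$-paintability through the amalgamations as you propose. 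The paper's separation is cleaner to write and sidesteps the difficulty you flag about maintaining paintability during the construction, but conceptually the two routes are the same.
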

	
	The proof of Lemma \ref{lem:distinguish_palette} is completely Ramsey-theoretical and relies on a result of Ne\v{s}et\v{r}il and R\"{o}dl \cite{NR:87} about Ramsey classes for ordered Steiner systems.
	We start with some preparations.
	An ordered $k$-graph $(H,<)$ is a pair where $H$ is a $k$-graph and $<$ is a total ordering of $V(H)$.
	Given two ordered hypergraphs~$(F,<)$ and~$(H,<)$, we say that~$(F,<)$ is a subgraph of~$(H,<)$ if there exists an injective order-preserving map~$\psi: V(F)\rightarrow V(H)$ that is a homomorphism, i.e., a map such that~$\psi(x)< \psi(y)$ for~$x< y$ and such that~$\psi(f)\in E(H)$ for every edge~$f\in E(F)$.
	Let $\mbinom{(H,<)}{(F,<)}$ denote the family of copies of $(F,<)$ in $(H,<)$.
	The next theorem shows that the class of ordered linear~$k$-graphs is edge-Ramsey (see also~\cite{BNRR:18}*{Lemma~2.12} and~\cite{RR:23}*{Corollary 3.12}).
	\begin{theorem}[\cite{NR:87}]\label{thm:ramseysteiner}
		Let $(G,<)$ be an ordered linear $k$-graph with~$k\geq 2$, and let~$r\geq 1$ be an integer. Then there exists an ordered linear~$k$-graph~$(H,<)$ and a family~$\cG\subseteq \mbinom{(H,<)}{(G,<)}$ of copies of $(G, <)$ in~$(H, <)$ satisfying the following statements:
		\begin{enumerate}
			\item[$(i)$] For any $r$-coloring of the edges of $H$, there exists a monochromatic copy $(G',<)\in \cG$.
			\item[$(ii)$] For any two distinct copies $(G',<), (G'',<) \in \cG$, it holds that either $|V(G')\cap V(G'')|\leq 1$ or $V(G')\cap V(G'')= e$ for an edge $e \in G_3$ for some~$(G_3,<)\in\mathcal{G}$.
		\end{enumerate}
	\end{theorem}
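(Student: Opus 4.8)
The plan is to prove the theorem by the partite construction method of Ne\v{s}et\v{r}il and R\"{o}dl. Throughout I would work in the category of \emph{ordered linear $(G,<)$-partite systems}: an object is an ordered linear $k$-graph $(\Pi,\prec)$ whose vertex set carries a partition $V(\Pi)=\bigcup_{v\in V(G)}\Pi_v$ into \emph{classes} indexed by $V(G)$, subject to (a) every edge of $\Pi$ is \emph{transversal}, meeting the classes $\{\Pi_v:v\in f\}$ in exactly one vertex each for some $f\in E(G)$ and no other class, with the induced bijection onto $f$ order-preserving, and (b) $\prec$ orders the classes consecutively, i.e.\ $u<v$ forces $x\prec y$ whenever $x\in\Pi_u$, $y\in\Pi_v$. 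A \emph{copy} of one such system in another is an embedding preserving classes, edges and order. Two transversal edges meeting different class-sets share at most one vertex because $G$ is linear, but two meeting the same class-set could share up to $k-1$ vertices, so linearity is a genuine extra constraint. Note $(G,<)$ itself, with singleton classes, is an object of this category. The theorem reduces to producing an object $(H,<)$ together with a family $\cG$ of copies of $(G,<)$ in it such that every $r$-colouring of $E(H)$ admits a monochromatic member of $\cG$, and any two members of $\cG$ meet in at most one vertex or in exactly the vertex set of one edge (which is then an edge of some copy in $\cG$); forgetting the partition then gives statements $(i)$ and $(ii)$.

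The engine is the \textbf{partite lemma}: for every object $B$, every $f\in E(G)$, and every $r$, there is an object $A$ and a family $\cA$ of copies of $B$ in $A$ such that (1) for every $r$-colouring of the edges of $A$ transversal to $f$, some copy in $\cA$ has all such edges of a single colour, and (2) any two copies in $\cA$ meet in at most one vertex or in exactly the vertex set of one edge. I would prove this by a Hales--Jewett argument: build the classes of $A$ from suitably thinned powers of the classes of $B$ so that copies of $B$ correspond to combinatorial lines; then an $r$-colouring of the $f$-transversal edges of $A$ pulls back to an $r$-colouring of a Hales--Jewett cube, and a monochromatic line yields a monochromatic copy of $B$. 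The delicacy here --- and the first main obstacle --- is that the naive power of $B$ is \emph{not} a partial Steiner system, since two $f$-transversal edges agreeing in all but one coordinate share $k-1$ vertices; one must replace the product by a linear analogue (in effect, running a partite construction inside the partite lemma), and it is this care that also delivers the almost-disjointness (2). This is precisely what makes ``ordered linear $k$-graphs form a Ramsey class'' a nontrivial statement rather than a consequence of ordinary Ramsey numbers.

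With the partite lemma available, I would run the \textbf{partite construction}: enumerate $E(G)=\{f_1,\dots,f_m\}$ and build objects $\Pi^{(0)},\dots,\Pi^{(m)}$ with $\Pi^{(0)}=(G,<)$, where $\Pi^{(j)}$ is obtained by amalgamating many copies of $\Pi^{(j-1)}$ along their $f_j$-slices according to the object $A$ given by the partite lemma applied to $B=\Pi^{(j-1)}$ and $f_j$ --- each copy in the guaranteed family $\cA$ carries one copy of $\Pi^{(j-1)}$, the copies being glued only along the classes indexed by $f_j$ and otherwise disjoint. Set $(H,<):=\Pi^{(m)}$ and let $\cG$ be the family of copies of $(G,<)$ inherited from $\Pi^{(0)}$ through the successive amalgamations. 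Given an $r$-colouring of $E(H)$, one peels off one stage at a time: the partite lemma at stage $m$ produces a copy of $\Pi^{(m-1)}$ inside $\Pi^{(m)}$ whose entire $f_m$-slice is monochromatic; restricting the colouring to that copy and iterating down to $\Pi^{(0)}$, one extracts a member of $\cG$ all of whose edges have the same colour. Linearity and property $(ii)$ survive each amalgamation because two glued copies of $\Pi^{(j-1)}$ share vertices only inside the $f_j$-classes, where (2) bounds the intersections; the ordering survives because every morphism and amalgamation is order-preserving and keeps condition (b).

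I expect the principal obstacles, in order, to be: first, the linear partite lemma, since the Hales--Jewett power must be refined into a partial-Steiner-system version that still arrows $B$ on $f$-transversal edges; second, the colour synchronisation in the peeling argument --- one must arrange the amalgamations (typically by iterating the partite lemma once per colour at each stage, or by colouring the $f_j$-slices with a derived colouring that records the colour the inductive hypothesis would return) so that the monochromatic $f_j$-slice produced at stage $j$ carries the same colour as the copy of $(G,<)$ later extracted from the nested copy of $\Pi^{(j-1)}$; and third, the global verification of $(ii)$, which requires tracking how the families of copies at consecutive stages interlock. The size of $(H,<)$ is controlled by iterated Hales--Jewett and Ramsey numbers, which accounts for the enormous bounds mentioned in the introduction.
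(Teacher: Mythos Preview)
The paper does not prove this theorem; it is quoted from Ne\v{s}et\v{r}il and R\"{o}dl~\cite{NR:87} (with pointers to~\cite{BNRR:18} and~\cite{RR:23}) and used as a black box. So there is no proof in the paper to compare against.

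That said, your outline is the correct one and is essentially the argument of~\cite{NR:87}: the partite construction, with a partite lemma handling one edge-type at a time and an amalgamation over the edges of $G$. Your identification of the obstacles is accurate --- the genuine work sits in the partite lemma, where one must produce a \emph{linear} Ramsey object rather than the raw Hales--Jewett power, and in verifying that the intersection property~$(ii)$ propagates through the amalgamation stages. One small comment: in the original treatment the partite lemma is itself proved by an inner partite construction (rather than a direct Hales--Jewett argument followed by thinning), precisely to keep linearity under control; your parenthetical ``in effect, running a partite construction inside the partite lemma'' is the right instinct, and in practice that is how the proof goes.
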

	
	We remark that for $k=2$, a linear $k$-graph is just a graph, and condition (ii) translates to the fact that two distinct copies $(G',<)$ and $(G'',<) \in \cG$ intersect either in an emptys set, a single vertex or in exactly one edge.
	Theorem \ref{thm:ramseysteiner} can be used to prove the following Ramsey result about systems of graphs. 
	We note that similar statements were obtained previously in~\cites{NR:89, AH:78}.
	
	\begin{prop}\label{prop:ramseysystem}
		Let $n, t\geq 1$ be integers, and let~$(G, <)$ be an ordered graph where $G=\bigcup_{i=1}^n G_i$ is the union of~$n$ pairwise edge-disjoint ordered graphs with vertex set~$V(G)$.
		Then there exists an ordered graph $(H,<)$ where $H=\bigcup_{i=1}^n H_i$ is the union of~$n$ pairwise edge-disjoint ordered graphs with vertex set~$V(H)$ such that any $t$-coloring of~$H$ yields a set $X\subseteq V(H)$ with the following properties:
		\begin{enumerate}
			\item[$(i)$] For $1\leq i \leq n$, we have $(H_i[X],<)\cong(G_i, <)$.
			\item[$(ii)$] For $1\leq i \leq n$, the graph $H_i[X]$ is monochromatic.
		\end{enumerate}
	\end{prop}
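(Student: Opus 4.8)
The plan is to use the rigidity of linear orders to recast the statement as a Ramsey-type statement for ordered edge-colored complete graphs, and then to derive that from Theorem~\ref{thm:ramseysteiner} in the case $k=2$, where a linear $2$-graph is just an ordered graph. For any $X\subseteq V(H)$ with $|X|=|V(G)|$ there is a \emph{unique} order-preserving bijection $\phi_X\colon X\to V(G)$, because $<$ is a linear order. Hence condition~(i) for $X$ is equivalent to the single requirement that $\phi_X$ be a graph isomorphism $H_i[X]\to G_i$ simultaneously for every $i\in[n]$. Writing $\ell\colon V(G)^{(2)}\to\{0,1,\dots,n\}$ for the map that records whether a pair is a non-edge of $G$ (value~$0$) or an edge of $G_i$ (value~$i$), and $\ell'\colon V(H)^{(2)}\to\{0,\dots,n\}$ for the analogous map of $H$, condition~(i) says precisely that $\phi_X$ carries $\ell'$ to $\ell$; and since $E(H_i[X])=\{e\in X^{(2)}:\ell'(e)=i\}$, condition~(ii) says precisely that the given $t$-coloring is constant on each color class of $\ell'$ inside $X^{(2)}$. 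Thus it suffices to prove the following: for every finite ordered set with an edge-coloring $\ell$ into $\{0,\dots,n\}$ and every $t\in\NN$, there is a finite ordered set with an edge-coloring $\ell'$ into $\{0,\dots,n\}$ such that for every $t$-coloring of its pairs there is an $\ell$-preserving order-embedding of the template into it such that the $t$-coloring is constant on each $\ell'$-class of pairs inside the image.

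I would establish this by iterating Theorem~\ref{thm:ramseysteiner} with $k=2$, handling the colors $0,1,\dots,n$ of $\ell$ one at a time. The base case is a single layer with the two colors ``edge'' and ``non-edge'', i.e., the ordered \emph{induced} edge-Ramsey theorem for graphs; Theorem~\ref{thm:ramseysteiner} yields this rather directly, since, taking $H$ to be the union of the family $\cG$ of copies it supplies, the intersection property~(ii) --- which tightly restricts how two copies in $\cG$ may overlap --- forces each copy in $\cG$ to be \emph{induced} in $H$, so a copy that is monochromatic under a given $r$-coloring of $E(H)$ is the desired induced monochromatic copy. For several layers one proceeds inductively, installing one layer at a time: at stage $i$ one applies the single-layer result to $G_i$ inside the structure already built, with the $t$-coloring refined by the patterns produced by the earlier stages, and one must take care that the copies created at a later stage still respect the layers fixed earlier. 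This is where Theorem~\ref{thm:ramseysteiner} gets used repeatedly, giving the iterated (partite-construction-type) structure of the argument. Carrying this out produces $(H,<)$ together with its decomposition $H=\bigcup_{i=1}^n H_i$; closely related statements appear in \cites{NR:89, AH:78}.

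I expect the crux to be exactly the simultaneity: producing copies that are induced within every layer at once and monochromatic layer by layer, while a single $t$-coloring of all of $E(H)$ is in force, requires that installing a new layer not disturb the layers already installed, which is what forces the repeated applications of Theorem~\ref{thm:ramseysteiner} and, as is already noted for Lemma~\ref{lem:distinguish_palette}, is the source of the enormous bounds.
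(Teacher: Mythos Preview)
Your overall strategy---iterate Theorem~\ref{thm:ramseysteiner} (case $k=2$) once per layer---is exactly what the paper does, and your reformulation via the labeling $\ell\colon V(G)^{(2)}\to\{0,\dots,n\}$ is a clean way to see that conditions~(i) and~(ii) amount to asking for a color-preserving ordered copy that is monochromatic layer by layer. So the approach is correct.

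What is missing is the actual mechanism of the induction, and your one-line description of it is slightly off. The phrase ``with the $t$-coloring refined by the patterns produced by the earlier stages'' suggests a product-coloring argument run \emph{during} construction, but the $t$-coloring is only given on the final $H$; there is nothing to refine while you are still building. The paper separates the two phases cleanly. In the \emph{construction} phase one builds a tower $A^0=G,\,A^1,\dots,A^n=H$, each $A^j$ carrying all $n$ layers $A^j_1,\dots,A^j_n$: to go from $A^{j-1}$ to $A^j$, apply Theorem~\ref{thm:ramseysteiner} to the single ordered graph $A^{j-1}_j$ to obtain $A^j_j$ together with its system $\cA_j$ of copies, and then \emph{transport} every other layer $A^{j-1}_i$ ($i\ne j$) by placing one copy of it on the vertex set of each $B\in\cA_j$. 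The intersection condition~(ii) of Theorem~\ref{thm:ramseysteiner} is exactly what guarantees that the transported layers stay pairwise edge-disjoint---this is the point you flag as ``one must take care that \dots later stage[s] still respect the layers fixed earlier,'' but you do not say how to do it. In the \emph{extraction} phase one takes the given $t$-coloring of $H=A^n$ and runs a reverse induction: inside $A^n$ the Ramsey property of $A^n_n$ yields a monochromatic copy of $A^{n-1}_n$, hence a full copy of $A^{n-1}$ with layer~$n$ monochromatic; restrict and repeat for layer $n-1$, and so on down to $A^0=G$.

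In short: right idea, same route as the paper, but your write-up stops at the plan. The substance lies in the explicit ``Ramsey layer~$j$, transport the rest'' step (with edge-disjointness coming from the partite intersection property) and the reverse-induction extraction; neither is carried out in your proposal.
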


	\begin{proof}
		For the sake of brevity, throughout the proof we will omit the total ordering $<$ from the notation and denote an ordered graph $(H,<)$ by $H$.
		We inductively construct ordered graphs $A^0, \ldots, A^n$ such that for each $0\leq j \leq n$, the ordered graph $A^j=\bigcup_{i=1}^n A_i^j$ is the union of $n$ pairwise edge-disjoint ordered graphs on $V(A^j)$ as follows: Let $A^0=G$ and $A^0_i=G_i$ for $1\leq i \leq n$.
		Suppose now that for $1\leq j \leq n$ we have already defined the ordered graph~$A^{j-1}$ and want to define~$A^j$.
		Apply Theorem \ref{thm:ramseysteiner} to the ordered graph $A^{j-1}_j$ and~$t$ colors to obtain the ordered graph $A^{j}_j$ and a system of copies $\cA_j\subseteq \mbinom{A^j_j}{A^{j-1}_j}$ satisfying properties (i) and (ii) of the statement.
		In particular, for any two copies $B, B' \in \cA_j$ of $A^{j-1}_j$ we have that
		\begin{align}\label{eq:clean}
			|V(B)\cap V(B')|=1 \quad \text{or} \quad V(B)\cap V(B')\subseteq e 
		\end{align}
		for some edge~$e$ of some copy of~$A_j^{j-1}$ in~$\mathcal{A}_j$.
		For $i\neq j$, let $A^{j}_i$ be the ordered graph on~$V(A_j^j)$ (with the same total ordering $<$ on $V(A^j_j)$) obtained by adding a copy of the ordered graph $A_i^{j-1}$ to each set of vertices $V(B)$ with $B\in \cA_j$, see Figure \ref{fig:fig1}.
		By~\eqref{eq:clean}, each pair of copies~$B, B'\in\cA_j$ either intersects in a single vertex or in an edge of some copy of~$A^{j-1}_{j}$.
		Hence, keeping in mind that the graphs~$A_i^{j-1}$ are pairwise edge-disjoint, all~$A_i^j$ are pairwise edge-disjoint.
		We set $A^j=\bigcup_{i=1}^jA_i^j$. The key point of the construction is that the ordered graphs $A^0, \ldots, A^n$ satisfy the following claim.
		
		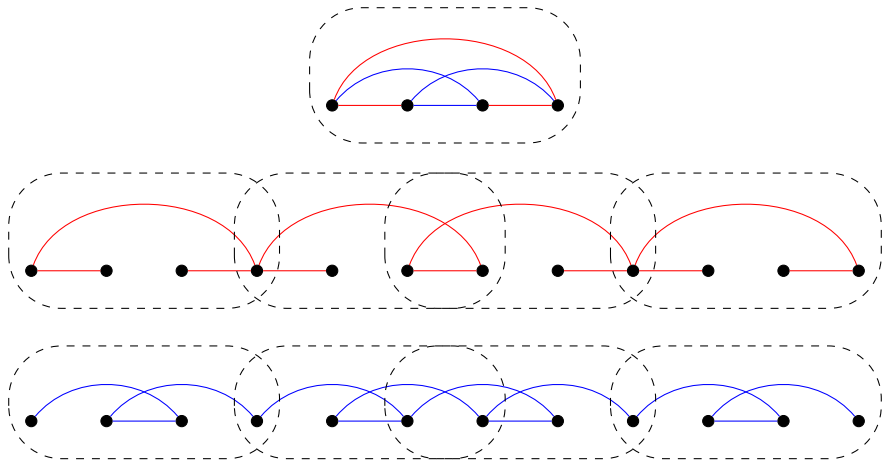
\begin{figure}[h]
			\centering
			{\hfil\begin{tikzpicture}
					\node[circle, draw, fill=black, inner sep=1.5pt] (x1) at (4,2.2) {};
					\node[circle, draw, fill=black, inner sep=1.5pt] (x2) at (5,2.2) {};
					\node[circle, draw, fill=black, inner sep=1.5pt] (x3) at (6,2.2) {};
					\node[circle, draw, fill=black, inner sep=1.5pt] (x4) at (7,2.2) {};
					
					\node[circle, draw, fill=black, inner sep=1.5pt] (v1) at (0,0) {};
					\node[circle, draw, fill=black, inner sep=1.5pt] (v2) at (1,0) {};
					\node[circle, draw, fill=black, inner sep=1.5pt] (v3) at (2,0) {};
					\node[circle, draw, fill=black, inner sep=1.5pt] (v4) at (3,0) {};
					\node[circle, draw, fill=black, inner sep=1.5pt] (v5) at (4,0) {};
					\node[circle, draw, fill=black, inner sep=1.5pt] (v6) at (5,0) {};
					\node[circle, draw, fill=black, inner sep=1.5pt] (v7) at (6,0) {};
					\node[circle, draw, fill=black, inner sep=1.5pt] (v8) at (7,0) {};
					\node[circle, draw, fill=black, inner sep=1.5pt] (v9) at (8,0) {};
					\node[circle, draw, fill=black, inner sep=1.5pt] (v10) at (9,0) {};
					\node[circle, draw, fill=black, inner sep=1.5pt] (v11) at (10,0) {};
					\node[circle, draw, fill=black, inner sep=1.5pt] (v12) at (11,0) {};
					
					\node[circle, draw, fill=black, inner sep=1.5pt] (u1) at (0,-2) {};
					\node[circle, draw, fill=black, inner sep=1.5pt] (u2) at (1,-2) {};
					\node[circle, draw, fill=black, inner sep=1.5pt] (u3) at (2,-2) {};
					\node[circle, draw, fill=black, inner sep=1.5pt] (u4) at (3,-2) {};
					\node[circle, draw, fill=black, inner sep=1.5pt] (u5) at (4,-2) {};
					\node[circle, draw, fill=black, inner sep=1.5pt] (u6) at (5,-2) {};
					\node[circle, draw, fill=black, inner sep=1.5pt] (u7) at (6,-2) {};
					\node[circle, draw, fill=black, inner sep=1.5pt] (u8) at (7,-2) {};
					\node[circle, draw, fill=black, inner sep=1.5pt] (u9) at (8,-2) {};
					\node[circle, draw, fill=black, inner sep=1.5pt] (u10) at (9,-2) {};
					\node[circle, draw, fill=black, inner sep=1.5pt] (u11) at (10,-2) {};
					\node[circle, draw, fill=black, inner sep=1.5pt] (u12) at (11,-2) {};
					
					\draw[red] (x1) -- (x2); 
					\draw[red][bend left=70] (x1) to (x4); 
					\draw[red](x3) -- (x4); 
					\draw[blue][bend left=50] (x1) to (x3);
					\draw[blue][bend left=50] (x2) to (x4); 
					\draw[blue](x2) -- (x3); 
					
					\draw[red] (v1) -- (v2); 
					\draw[red][bend left=70] (v1) to (v4); 
					\draw[red] (v3) -- (v4); 
					\draw[red] (v4) -- (v5); 
					\draw[red][bend left=70] (v4) to (v7); 
					\draw[red] (v6) -- (v7); 
					\draw[red][bend left=70] (v6) to (v9);
					\draw[red] (v8) -- (v9);
					\draw[red][bend left=70] (v9) to (v12); 
					\draw[red] (v9) -- (v10);
					\draw[red] (v11) -- (v12);
					
					\draw[blue] (u2) -- (u3); 
					\draw[blue][bend left=50] (u1) to (u3); 
					\draw[blue][bend left=50] (u2) to (u4); 
					\draw[blue] (u5) -- (u6); 
					\draw[blue][bend left=50] (u4) to (u6); 
					\draw[blue][bend left=50] (u5) to (u7); 
					\draw[blue] (u7) -- (u8);
					\draw[blue][bend left=50] (u6) to (u8);
					\draw[blue][bend left=50] (u7) to (u9); 
					\draw[blue] (u10) -- (u11);
					\draw[blue][bend left=50] (u9) to (u11);
					\draw[blue][bend left=50] (u10) to (u12);
					
					\draw[dashed, rounded corners=20pt] (3.7,1.7) rectangle (7.3,3.5);
					
					\draw[dashed, rounded corners=20pt] (-0.3,-0.5) rectangle (3.3,1.3);
					\draw[dashed, rounded corners=20pt] (2.7,-0.5) rectangle (6.3,1.3);
					\draw[dashed, rounded corners=20pt] (4.7,-0.5) rectangle (8.3,1.3);
					\draw[dashed, rounded corners=20pt] (7.7,-0.5) rectangle (11.3,1.3);
					
					\draw[dashed, rounded corners=20pt] (-0.3,-2.5) rectangle (3.3,-1);
					\draw[dashed, rounded corners=20pt] (2.7,-2.5) rectangle (6.3,-1);
					\draw[dashed, rounded corners=20pt] (4.7,-2.5) rectangle (8.3,-1);
					\draw[dashed, rounded corners=20pt] (7.7,-2.5) rectangle (11.3,-1);
					
				\end{tikzpicture}\hfil}
			\caption{An example with $n=2$. The ordered graph $A^0_1$ is in red and $A^0_2$ is in blue. On the second line we have the Ramsey graph $A^1_1$ with several copies of $A^0_1$ all intersecting in either an edge or a single vertex. Finally, in the last line we have the graph $A^1_2$ on the same set of vertices.}
			\label{fig:fig1}
		\end{figure}
		
		\begin{claim}\label{clm:reverse_induction}
			Let $0\leq j \leq n$. Then every $t$-coloring of $A^n$ contains a copy of $A^j$ such that the ordered graphs $A^j_{k}$ are monochromatic for $j+1\leq k\leq n$.
		\end{claim}
		\begin{proof}
			We prove the statement by reverse induction on $j$. If $j=n$, then the statement is vacuously true. Now assume that for a $t$-coloring of $A^n$ we obtain a copy $\tilde{A}^{j+1}=\bigcup_{i=1}^n \tilde{A}_i^{j+1}$ of $A^{j+1}$ such that the ordered graphs $\tilde{A}^{j+1}_k$ are monochromatic for $j+2\leq k\leq n$.
			Consider the restriction of the~$t$-coloring to the ordered graph~$\tilde{A}_{j+1}^{j+1}$.
			By construction and Theorem~\ref{thm:ramseysteiner}, there exists a monochromatic copy $\tilde{A}_{j+1}^{j}$ of $A_{j+1}^{j}$.
			For $i\neq j$, let $\tilde{A}_i^j=\tilde{A}_i^{j+1}[V(\tilde{A}_{j+1}^{j})]$. It is easy to see that $\tilde{A}^j=\bigcup_{i=1}^n \tilde{A}_i^j$ is a copy of $A^j$. 
			Moreover, since $\tilde{A}_k^j\subseteq \tilde{A}_k^{j+1}$ for $j+2\leq k\leq n$, we have that $\tilde{A}_k^j$ is monochromatic for $j+1\leq k \leq n$.
			This concludes the proof of the claim.  
		\end{proof}
		
		Let $H:=A^n$. Then by Claim \ref{clm:reverse_induction}, every $t$-coloring of $H$ contains a copy of $A^0$ such that every $A^{0}_i$ is monochromatic. Since $A^0=G$, properties (i) and (ii) follow. 
	\end{proof}
	
	The second auxiliary result establishes the existence of an ordered linear $k$-graph with the property that, regardless of how one orders its vertices, there will always be an edge which according to the new ordering is arranged in either a strictly increasing or decreasing order (with respect to the original order).
	We remark that the problem becomes somewhat simpler if we drop the condition that the $k$-graph is linear.
	Indeed, in this case, one can construct a graph by simply taking the complete $k$-graph on $(k-1)^2+1$ vertices and applying the Erd\H{o}s--Szekeres theorem \cite{ES:35}.
	
	\begin{prop}\label{prop:ordering}
		For every integer $k\geq 2$, there exists an ordered linear $k$-graph $(H,<)$ with the following property. For any total ordering $\strictif$ of $V(H)$, there exists an edge $\{x_1,\ldots,x_k\}$ with $x_1<\ldots< x_k$ such that either
		\begin{enumerate}
			\item[$(a)$] the edge $\{x_1,\ldots,x_k\}$ is increasing in $\strictif$, i.e., $x_1\strictif \ldots \strictif x_k$; or
			\item[$(b)$] the edge $\{x_1,\ldots,x_k\}$ is decreasing in $\strictif$, i.e., $x_1\strictfi \ldots \strictfi x_k$.
		\end{enumerate}
	\end{prop}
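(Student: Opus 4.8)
The plan is to reduce the statement, via the Ramsey theorem for ordered linear hypergraphs (Theorem~\ref{thm:ramseysteiner}), to the construction of a small ``rigid'' gadget. For $k=2$ a single edge already works, since any ordering of two vertices is monotone; so assume $k\ge 3$ from now on.

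Call a total ordering $\strictif$ of the vertices of an ordered linear $k$-graph $(G,<)$ \emph{$\sigma$-homogeneous}, for a permutation $\sigma$ of $[k]$, if for every edge $\{x_1<\dots<x_k\}\in E(G)$ the vertex $x_i$ is the $\sigma(i)$-th smallest in $\strictif$ among $x_1,\dots,x_k$. Thus $(G,<)$ admits a $\sigma$-homogeneous ordering if and only if the digraph $D_\sigma$ on $V(G)$ in which each edge $\{x_1<\dots<x_k\}$ is replaced by the transitive tournament with arcs $x_i\to x_j$ whenever $\sigma(i)<\sigma(j)$ is acyclic (a $\sigma$-homogeneous ordering is exactly a topological ordering of $D_\sigma$). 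The key step is: for every \emph{non-monotone} $\sigma$ — that is, $\sigma$ is neither the identity $\mathrm{id}$ nor the reversal $\mathrm{rev}$ — there is a linear $k$-graph $G_\sigma$ with no $\sigma$-homogeneous ordering, built from just three edges. Since $\sigma$ is not the reversal, some pair of positions $p<q$ has $\sigma(p)<\sigma(q)$; since $\sigma$ is not the identity, some pair $p'<q'$ has $\sigma(p')>\sigma(q')$. Fix vertices $a<b<c$ with sufficiently many auxiliary vertices below $a$, between $a$ and $b$, between $b$ and $c$, and above $c$. Let $e_1$ put $a$ in position $p$ and $b$ in position $q$; let $e_2$ put $b$ in position $p$ and $c$ in position $q$; let $e_3$ put $a$ in position $p'$ and $c$ in position $q'$; in each case fill the remaining positions with fresh auxiliary vertices in the appropriate $<$-ranges, chosen so that $e_1,e_2,e_3$ pairwise intersect only in $b$, $c$, and $a$ respectively. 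Then $D_\sigma$ has the arcs $a\to b$ (from $e_1$), $b\to c$ (from $e_2$) and $c\to a$ (from $e_3$), hence a directed triangle, so $G_\sigma$ admits no $\sigma$-homogeneous ordering. Taking $G:=G_{\sigma_1}\sqcup\dots\sqcup G_{\sigma_m}$ over all non-monotone $\sigma$ (with the orders concatenated) gives an ordered linear $k$-graph that has no $\sigma$-homogeneous ordering for any non-monotone $\sigma$, since such an ordering would restrict to a $\sigma$-homogeneous ordering of the block $G_\sigma$.

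Finally, apply Theorem~\ref{thm:ramseysteiner} to $(G,<)$ with $r=k!$ colours, obtaining an ordered linear $k$-graph $(H,<)$ and a family $\cG$ of copies of $(G,<)$ with property $(i)$; this $(H,<)$ is the one we want. Indeed, given any total ordering $\strictif$ of $V(H)$, colour each edge of $H$ by the relative $\strictif$-order of its vertices listed in increasing $<$-order — one of the $k!$ permutations of $[k]$. By Theorem~\ref{thm:ramseysteiner}$(i)$ there is a copy $(G',<)\in\cG$ all of whose edges get the same colour $\tau$; transporting $\strictif$ restricted to $V(G')$ back along the order-isomorphism $G'\cong G$ produces a $\tau$-homogeneous ordering of $G$. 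By the previous paragraph $\tau$ is not non-monotone, i.e.\ $\tau$ is $\mathrm{id}$ or $\mathrm{rev}$, so every edge of $G'$ is increasing or decreasing with respect to $\strictif$ — and $G'$ has an edge.

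I expect the gadget construction to be the only real obstacle. The crucial feature is that linearity rules out a $2$-cycle in $D_\sigma$ (which would need two edges sharing the two pertinent vertices), so the cycle must be routed through three edges; this is precisely where one uses that $\sigma$ has both an ascending and a descending pair of positions, i.e.\ that $\sigma$ is non-monotone. Checking that the auxiliary vertices can be chosen to keep $G_\sigma$ (hence $G$) linear is routine once one allows enough vertices.
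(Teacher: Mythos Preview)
Your proposal is correct and follows essentially the same route as the paper: for each non-monotone permutation $\sigma$ you build a three-edge linear gadget $G_\sigma$ whose edges, if all $\sigma$-compatible, would force a $3$-cycle among the three shared vertices; you take the disjoint union over all $\sigma$, and then apply Theorem~\ref{thm:ramseysteiner} with $k!$ colours given by the relative $\strictif$-order type of each edge. The paper's gadget is the same object (their $(a_\sigma,b_\sigma,c_\sigma,d_\sigma)$ are your $(p,q,p',q')$), and the cycle they exhibit is exactly your directed triangle in $D_\sigma$; your digraph/topological-ordering phrasing is a clean way to say the same thing.
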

	
	\begin{proof}
		Let $S_k$ be the group of all permutations on $k$ elements, and let $\id, \rev \in S_k$ be the permutations given by $\id(i)=i$ and $\rev(i)=k+1-i$. In other words, $\id$ and $\rev$ are the permutations that arrange the elements in increasing and decreasing order, respectively. Suppose that $\sigma \in S_k\setminus\{\id, \rev\}$. Then there exist integers $a_{\sigma}, b_\sigma, c_\sigma, d_\sigma \in [k]$, not necessarily distinct, satisfying $a_\sigma<b_\sigma$, $c_\sigma<d_\sigma$, and
		\begin{align}\label{eq:abcd}
			\sigma(a_\sigma)<\sigma(b_\sigma)\quad \text{ and }\quad \sigma(c_{\sigma})>\sigma(d_{\sigma}).
		\end{align}
		
		For every permutation $\sigma \in S_k\setminus\{\id, \rev\}$, we construct an ordered linear $k$-graph $(G_{\sigma},<)$ as follows. Let $G_{\sigma}$ be a $k$-graph on $3k-3$ vertices consisting of three edges $e_1$, $e_2$, and $e_3$, with $|e_i \cap e_j|=1$ for all $i\neq j$. Let $<$ be an ordering of $V(G_{\sigma})$ with $e_1=\{x_1,\ldots,x_k\}$, $e_2=\{y_1,\ldots,y_k\}$, and $e_3=\{z_1,\ldots, z_k\}$, where the vertices are labeled in increasing order in $<$, such that
		\begin{align}\label{eq:G_sigma}
			x_{a_\sigma}=z_{c_{\sigma}},\quad x_{b_\sigma}=y_{a_\sigma}, \quad \text{and} \quad y_{b_\sigma}=z_{d_\sigma}. 
		\end{align}
		Observe that such an ordering is always possible (e.g., see Figure \ref{fig:fig2}). Let $(G,<)$ be the ordered $k$-graph obtained by taking the vertex-disjoint union of $(G_{\sigma},<)$ for all permutations $\sigma \in S_k\setminus\{\id, \rev\}$. Our ordered linear $k$-graph $(H,<)$ is the $k$-graph obtained by applying Theorem \ref{thm:ramseysteiner} to $(G,<)$ with $t=k!$ colors.
		
		\begin{figure}[h]
			\centering
			{\hfil \begin{tikzpicture}[scale=1]
					
					\coordinate (x1) at (0,0);
					\coordinate (x2) at (1,0);
					\coordinate (x3) at (2,0);
					\coordinate (x4) at (3,0);
					\coordinate (x5) at (4,0);
					\coordinate (x6) at (5,0);
					\coordinate (x7) at (6,0);
					\coordinate (x8) at (7,0);
					\coordinate (x9) at (8,0);
					
					\draw[fill][green,opacity=0.2] (x6) arc [
					start angle=0,
					end angle=180,
					x radius=1.001,
					y radius =0.5
					] -- (x4) arc [
					start angle=0,
					end angle=180,
					x radius=1.001,
					y radius =0.5
					] -- (x2) arc
					[
					start angle=0,
					end angle=180,
					x radius=0.5,
					y radius =0.25
					] -- (x1) arc [
					start angle=180,
					end angle=0,
					x radius=2.5,
					y radius =1.25
					] -- (x6);
					
					\draw[fill][blue,opacity=0.2] (x7) arc
					[
					start angle=0,
					end angle=180,
					x radius=1.001,
					y radius =0.5
					] -- (x5) arc
					[
					start angle=0,
					end angle=180,
					x radius=0.5,
					y radius =0.25
					] -- (x4) arc
					[
					start angle=0,
					end angle=180,
					x radius=0.5,
					y radius =0.25
					] -- (x3) arc 
					[
					start angle=180,
					end angle=0,
					x radius=2,
					y radius =1
					];
					
					\draw[fill][red,opacity=0.2] (x9) arc
					[
					start angle=0,
					end angle=180,
					x radius=0.5,
					y radius =0.25
					] -- (x8) arc
					[
					start angle=0,
					end angle=180,
					x radius=1.5,
					y radius =0.75
					] -- (x5) arc
					[
					start angle=0,
					end angle=180,
					x radius=1.5,
					y radius =0.75
					] -- (x2) arc
					[
					start angle=180,
					end angle=0,
					x radius=3.5,
					y radius =1.75
					] -- (x9);

					\draw[green!60!black] (x2) arc
					[
					start angle=0,
					end angle=180,
					x radius=0.5,
					y radius =0.25
					] ;
					\draw[green!60!black] (x4) arc
					[
					start angle=0,
					end angle=180,
					x radius=1,
					y radius =0.5
					] ;
					
					\draw[blue!60!black] (x4) arc
					[
					start angle=0,
					end angle=180,
					x radius=0.5,
					y radius =0.25
					] ;
					\draw[blue!60!black] (x5) arc
					[
					start angle=0,
					end angle=180,
					x radius=0.5,
					y radius =0.25
					] ;
					\draw[red!60!black] (x5) arc
					[
					start angle=0,
					end angle=180,
					x radius=1.5,
					y radius =0.75
					] ;
					\draw[green!60!black] (x6) arc
					[
					start angle=0,
					end angle=180,
					x radius=2.5,
					y radius =1.25
					] ;
					\draw[green!60!black] (x6) arc
					[
					start angle=0,
					end angle=180,
					x radius=1,
					y radius =0.5
					] ;
					\draw[blue!60!black] (x7) arc
					[
					start angle=0,
					end angle=180,
					x radius=1,
					y radius =0.5
					] ;
					\draw[blue!60!black] (x7) arc
					[
					start angle=0,
					end angle=180,
					x radius=2,
					y radius =1
					] ;
					\draw[red!60!black] (x8) arc
					[
					start angle=0,
					end angle=180,
					x radius=1.5,
					y radius =0.75
					] ;
					\draw[red!60!black] (x9) arc
					[
					start angle=0,
					end angle=180,
					x radius=0.5,
					y radius =0.25
					] ;
					\draw[red!60!black] (x9) arc
					[
					start angle=0,
					end angle=180,
					x radius=3.5,
					y radius =1.75
					] ;
					
					\draw[fill] (x1) circle [radius=0.05];
					\draw[fill] (x2) circle [radius=0.05];
					\draw[fill] (x3) circle [radius=0.05];
					\draw[fill] (x4) circle [radius=0.05];
					\draw[fill] (x5) circle [radius=0.05];
					\draw[fill] (x6) circle [radius=0.05];
					\draw[fill] (x7) circle [radius=0.05];
					\draw[fill] (x8) circle [radius=0.05];
					\draw[fill] (x9) circle [radius=0.05];
					
					\node (r1) at (x1) [below] {$1$};
					\node (r2) at (x2) [below] {$2$};
					\node (r3) at (x3) [below] {$3$};
					\node (r4) at (x4) [below] {$4$};
					\node (r5) at (x5) [below] {$5$};
					\node (r6) at (x6) [below] {$6$};
					\node (r7) at (x7) [below] {$7$};
					\node (r8) at (x8) [below] {$8$};
					\node (r9) at (x9) [below] {$9$};
					
				\end{tikzpicture}\hfil}
			\caption{An example of $G_{\sigma}$ for the permutation $\sigma\in S_4$ given by $\sigma(1)=3$, $\sigma(2)=1$, $\sigma(3)=4$ and $\sigma(4)=2$ and $a_\sigma=2$, $b_{\sigma}=3$, $c_\sigma=1$ and $d_\sigma=2$. The edges are given by $e_1=\{1,2,4,6\}$ (green), $e_2=\{3,4,5,7\}$ (blue) and $e_3=\{2,5,8,9\}$ (red).}\label{fig:fig2}
		\end{figure}
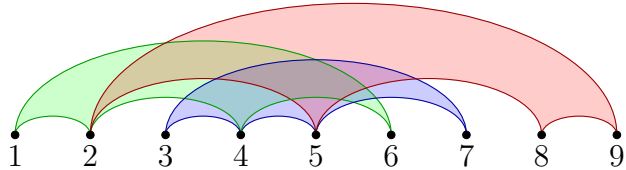
		
		The importance of the $k$-graphs $G_{\sigma}$ is illustrated in the next claim. Given an ordered edge $e=\{x_1,\ldots,x_k\}$ with $x_1< \ldots < x_k$, a permutation $\sigma\in S_k$, and a total ordering $\strictif$, we say that the edge $e$ is \emph{$\sigma$-compatible} with respect to the total ordering $\strictif$ if
		\begin{align}\label{eq:compatible}
			x_{i}\strictif x_{j} \quad \text{if and only if} \quad \sigma(i)<\sigma(j).
		\end{align}
		In particular, the edge $e$ is $\id$-compatible with respect to $<$.
		
		\begin{claim}\label{clm:gsigma}
			Let $\sigma\in S_k\setminus\{\id, \rev\}$ and let $\strictif$ be a total ordering of $V(G_\sigma)$.
			Then not all the edges of $(G_{\sigma},<)$ are $\sigma$-compatible with respect to~$\strictif$.
		\end{claim}
		
		\begin{proof}
			Suppose, for the sake of contradiction, that $\strictif$ is a total ordering of $V(G_{\sigma})$ such that all the edges are $\sigma$-compatible with respect to~$\strictif$.
			Let $e_1=\{x_1,\ldots,x_k\}$, $e_2=\{y_1,\ldots,y_k\}$, and $e_3=\{z_1,\ldots,z_k\}$ be the edges of $G_{\sigma}$, where the vertices are labeled in increasing order with respect to $<$ and satisfy (\ref{eq:G_sigma}).
			Since all the edges are $\sigma$-compatible with respect to $\strictif$, we have by (\ref{eq:abcd}), (\ref{eq:G_sigma}), and (\ref{eq:compatible}),
			\begin{align*}
				x_{a_{\sigma}}\strictif x_{b_{\sigma}}=y_{a_{\sigma}}\strictif y_{b_\sigma}=z_{d_\sigma}\strictif z_{c_\sigma}=x_{a_\sigma},
			\end{align*}
			which is a contradiction. This concludes the proof of the claim.
		\end{proof}
		
		We are now ready to prove that~$(H,<)$ satisfies the statement.
		Let~$\strictif$ be a total ordering of~$V(H)$, and let~$S_k=\{\sigma_1,\ldots, \sigma_{k!}\}$ be a labeling of the~$k!$ permutations.
		We define an auxiliary coloring~$\chi:H\to [k!]$ of the edges of~$H$ as follows.
		For every edge~$e \in H$, let~$\chi(e)=i$ if the edge~$e$ is~$\sigma_i$-compatible with respect to~$\strictif$.
		Since for every edge~$e$, there exists a unique permutation that is compatible with respect to~$\strictif$, the auxiliary coloring~$\chi$ is well defined.
		
		By the construction of $(H,<)$ and Theorem \ref{thm:ramseysteiner}, there exists a monochromatic copy of~$(G,<)$ with respect to~$\chi$.
		In particular, this implies that there exists~$\tau\in S_k$ such that every edge of~$G$ is~$\tau$-compatible with respect to~$\strictif$.
		Since~$G$ is the disjoint union of~$G_{\sigma}$ for~$\sigma\in S_k\setminus\{\id, \rev\}$, we obtain by Claim~\ref{clm:gsigma} that~$\tau\in \{\id,\rev\}$. 
		If~$\tau=\id$, then every edge of~$G$ satisfies (a). Otherwise, if~$\tau=\rev$, then every edge of~$G$ satisfies (b). This concludes the proof of the proposition.
	\end{proof}
	
	We are now ready to prove Lemma \ref{lem:distinguish_palette}.
	
	\begin{proof}[Proof of Lemma \ref{lem:distinguish_palette}]
		Let~$t:=c(P)$ and~$n:=c(Q)$ be the number of colors of~$P$ and~$Q$.
		Let~$m:=e(Q)$ and enumerate the patterns of~$Q$ by~$Q=\{q_1,\ldots,q_{m}\}$.
		We construct an ordered graph~$(G,<)$ on the vertex set~$[3m]$ with the natural order~$<$ by taking~$G=\bigcup_{j=1}^m T_j$ as the vertex-disjoint union of triangles~$T_j$, with vertex set
		\begin{align*}
			V(T_j)=\{3j-2, 3j-1, 3j\}, 
		\end{align*}
		for $1\leq j \leq m$.
		We partition the edges of $G$ into $n$ edge-disjoint ordered graphs $G=\bigcup_{i=1}^n G_i$ as follows.
		For each $1\leq j \leq m$, let~$q_j=(a_j,b_j,c_j)\in Q$ be the $j$-th pattern of $Q$.
		We define the subgraphs $G_i$ on the vertex set~$[3m]$ by setting
		\begin{align}\label{eq:constructG}
			E(G_i)=\bigcup_{j\in[m]:a_j=i}\{\{3j-2,3j-1\}\}\cup\bigcup_{j\in[m]:b_j=i}\{\{3j-2,3j\}\}\cup\bigcup_{j\in[m]:c_j=i}\{\{3j-1,3j\}\}\,.
		\end{align}
		Informally speaking, each triangle~$T_j$ corresponds to the~$j$-th pattern of~$Q$ and the graph~$G_i$ consists of all those pairs across all patterns which have the color~$i$.
		It is easy to check that the $G_{i}$'s are pairwise edge-disjoint and that $G=\bigcup_{i=1}^n G_i$ (e.g., see Figure \ref{fig:fig3}). 
		
		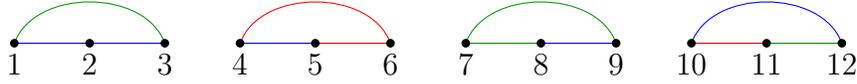
\begin{figure}[h]
			\centering
			{\hfil \begin{tikzpicture}[scale=1]
					
					\coordinate (x1) at (0,0);
					\coordinate (x2) at (1,0);
					\coordinate (x3) at (2,0);
					\coordinate (x4) at (3,0);
					\coordinate (x5) at (4,0);
					\coordinate (x6) at (5,0);
					\coordinate (x7) at (6,0);
					\coordinate (x8) at (7,0);
					\coordinate (x9) at (8,0);
					\coordinate (x10) at (9,0);
					\coordinate (x11) at (10,0);
					\coordinate (x12) at (11,0);
					
					\draw[blue] (x1) -- (x2); 
					\draw[green!60!black][bend left=70] (x1) to (x3); 
					\draw[blue](x2) -- (x3); 
					
					\draw[blue] (x4) -- (x5); 
					\draw[red][bend left=70] (x4) to (x6); 
					\draw[red](x5) -- (x6); 
					
					\draw[green!60!black] (x7) -- (x8); 
					\draw[green!60!black][bend left=70] (x7) to (x9); 
					\draw[blue](x8) -- (x9); 
					
					\draw[red] (x10) -- (x11); 
					\draw[blue][bend left=70] (x10) to (x12); 
					\draw[green!60!black](x11) -- (x12); 

					\draw[fill] (x1) circle [radius=0.05];
					\draw[fill] (x2) circle [radius=0.05];
					\draw[fill] (x3) circle [radius=0.05];
					\draw[fill] (x4) circle [radius=0.05];
					\draw[fill] (x5) circle [radius=0.05];
					\draw[fill] (x6) circle [radius=0.05];
					\draw[fill] (x7) circle [radius=0.05];
					\draw[fill] (x8) circle [radius=0.05];
					\draw[fill] (x9) circle [radius=0.05];
					\draw[fill] (x10) circle [radius=0.05];
					\draw[fill] (x11) circle [radius=0.05];
					\draw[fill] (x12) circle [radius=0.05];
					
					\node (r1) at (x1) [below] {$1$};
					\node (r2) at (x2) [below] {$2$};
					\node (r3) at (x3) [below] {$3$};
					\node (r4) at (x4) [below] {$4$};
					\node (r5) at (x5) [below] {$5$};
					\node (r6) at (x6) [below] {$6$};
					\node (r7) at (x7) [below] {$7$};
					\node (r8) at (x8) [below] {$8$};
					\node (r9) at (x9) [below] {$9$};
					\node (r10) at (x10) [below] {$10$};
					\node (r11) at (x11) [below] {$11$};
					\node (r12) at (x12) [below] {$12$};
					
				\end{tikzpicture}\hfil}
			\caption{An example of $G$ for the palette $Q=\{q_1,q_2,q_3,q_4\}$ given by $q_1=(\text{blue}, \text{green}, \text{blue})$, $q_2=(\text{blue},\text{red},\text{red})$, $q_3=(\text{green}, \text{green}, \text{blue})$ and $q_4=(\text{red},\text{blue},\text{green})$. The graph $G$ consists of $m=4$ triangles and it can be partitioned into $G_{\text{blue}}\cup G_{\text{green}}\cup G_{\text{red}}$ as shown in the picture.}\label{fig:fig3}
		\end{figure}
		
		We construct our desired $3$-graph $F^{(3)}$ by applying Propositions \ref{prop:ramseysystem} and \ref{prop:ordering}.
		Let $(H,<)$ with $H=\bigcup_{i=1}^n H_i$ be the ordered graph obtained by applying Proposition \ref{prop:ramseysystem} to the ordered graph $(G,<)$ with $G=\bigcup_{i=1}^n G_i$ and $t$-colors.
		Set~$k:=v(H)$ to be the number of vertices of~$H$ and let~$(\cH,<)$ be the linear~$k$-graph obtained by Proposition~\ref{prop:ordering}.
		We construct the ordered graph~$(A,<)$ with vertex set~$V(A)=V(\cH)$ by replacing each edge~$e\in \cH$ with a copy~$(H^e,<)$ of~$(H,<)$.
		Since the~$k$-graph~$\cH$ is linear, every two copies~$H^{e}$ and~$H^{e'}$ in~$A$ intersect in at most one vertex.
		This in particular implies that~$A=\bigcup_{i=1}^n A_i$ is the union of~$n$ edge-disjoint ordered graphs, where~$A_i=\bigcup_{e\in\cH}H^e_i$.
		Finally, the~$3$-graph~$F:=F^{(3)}$ is the hypergraph with vertex set~$V(F)=V(A)$ and the edge set as follows.
		Let~$\phi:A\rightarrow [n]$ be the map defined by setting~$\phi(e)=i$ if and only if~$e\in A_i$.
		With this notation in mind, the edge set of~$F$ is given by
		\begin{align}\label{eq:paintF}
			F=\Big\{\{x,y,z\}\in A^{(3)}:\: x<y<z,\,\{x,y\},\, \{x,z\},\, \{y,z\}\in A\nonumber\\ \text{ and } (\phi(x,y),\phi(x,z),\phi(y,z))\in Q\Big\}\,.
		\end{align} 
		In other words,~$F$ is the~$3$-graph where the edges correspond to those triangles in~$A$ whose color pattern is given by the palette~$Q$.
		Note that the construction given by~\eqref{eq:paintF} immediately gives us that~$Q$ paints~$F$.
		Indeed, just take the natural order~$<$ of~$V(F)$ and consider the coloring~$\chi:V(F)^{(2)}\rightarrow [n]$ given by 
		\begin{align*}
			\chi(x,y)=\begin{cases}
				\phi(x,y),& \quad \text{if $\{x,y\}\in A$,}\\
				1, &\quad \text{otherwise}.
			\end{cases}
		\end{align*}
		
		We claim that~$P$ does not paint~$F$. Suppose to the contrary that it does.
		Then there exists a total ordering~$\strictif$ of $V(F)$ and a coloring~$\chi_P:V(F)^{(2)}\rightarrow [t]$ such that
		\begin{align*}
			(\chi_P(x,y), \chi_P(x,z),\chi_P(x,z))\in P
		\end{align*}
		for every~$\{x,y,z\}\in F$ with~$x\strictif y \strictif z$.
		Consider the restriction of~$\chi_P$ to the edges of~$A\subseteq V(F)^{(2)}$.
		By construction of~$A$ and Propositions~\ref{prop:ramseysystem} and~\ref{prop:ordering}, there exists a copy of~$(G,<)$ with~$G=G_1\cup\ldots \cup G_n$ and vertex set~$X=\{x_1,\ldots,x_{3m}\}$ with~$x_1<\ldots<x_{3m}$ such that
		\begin{enumerate}
			\item[(i)] For $1\leq i \leq n$, the graph $G_i$ is monochromatic with respect to $\chi_P$
			\item[(ii)] Either $x_1\strictif \ldots \strictif x_{3m}$ or $x_1\strictfi \ldots \strictfi x_{3m}$.
		\end{enumerate}
		Note that by (\ref{eq:paintF}) and the definition of $G$, the induced graph $F[X]$ is just a matching of size $m$ with edges $\{x_{3j-2},x_{3j-1}, x_{3j}\}$ for $1\leq j\leq m$ (see Figure \ref{fig:fig3}). Let $\psi:C(Q)\rightarrow C(P)$ be the map defined by
		\begin{align*}
			\psi(i)=\chi_P(G_i),
		\end{align*}
		i.e., $\psi(i)$ is the color of the monochromatic graph $G_i$.
		Claiming that this map gives a homomorphism from~$Q$ to~$P$ or a homomorphism from~$Q$ to~$\rev(P)$, we split the proof into two cases.
		
		\vspace{0.2cm}
		
		\noindent \underline{Case 1:} $x_1\strictif\ldots \strictif x_{3m}$.
		
		\vspace{0.2cm}
		
		For $1\leq j \leq m$, let $q_j=(a_j,b_j,c_j) \in Q$ be the $j$-th pattern of $Q$. By (\ref{eq:constructG}) we have that $\phi(x_{3j-2},x_{3j-1})=a_j$, $\phi(x_{3j-2},x_{3j})=b_j$ and $\phi(x_{3j-1},x_{3j})=c_j$.
		Since~$\chi_P$ is a witness that~$P$ paints~$F$ and we further have~$x_{3j-2}\strictif x_{3j-1}\strictif x_{3j}$ and~$\{x_{3j-2},x_{3j-1},x_{3j}\}\in F$, we infer that
		\begin{align*}
			(\psi(a_j),\psi(b_j),\psi(c_j))=(\chi_P(x_{3j-2},x_{3j-1}),\chi_P(x_{3j-2},x_{3j}), \chi_P(x_{3j-1},x_{3j}))\in P,
		\end{align*}
		for $1\leq j \leq m$. This in particular implies that $\psi$ is a homomorphism from $Q$ to $P$, which contradicts the assumption of the lemma.
		
		\vspace{0.2cm}
		
		\noindent \underline{Case 2:} $x_1\strictfi\ldots \strictfi x_{3m}$.
		
		\vspace{0.2cm}
		
		Similarly as in Case 1, since $P$ paints $F$ and $x_{3j}\strictif x_{3j-1} \strictif x_{3j-2}$, we have that
		\begin{align*}
			(\psi(c_j),\psi(b_j),\psi(a_j))=(\chi_P(x_{3j-1},x_{3j}),\chi_P(x_{3j-2},x_{3j}), \chi_P(x_{3j-2},x_{3j-1}))\in P,
		\end{align*}
		for $1\leq j \leq m$. This implies that $\psi(q_j)=(\psi(a_j),\psi(b_j),\psi(c_j)) \in \rev(P)$ and hence $\psi$ is a homomorphism from $Q$ to $\rev(P)$, which is again a contradiction.
	\end{proof}

	\begin{remark}\label{rmk:familyP}
		We observe that the same proof of Lemma \ref{lem:distinguish_palette} can be used to prove the statement for a finite family of palettes $\{P_1,\ldots,P_k\}$. That is, given a palette $Q$ that is not contained in a blow-up of $P_i$ and $\rev(P_i)$ for $1\leq i \leq k$, then there exists a $3$-graph $F$ such that $Q$ paints $F$ and none of the $P_i$'s paints $F$.
	\end{remark}
	
	\section{Properties of blow-ups of palettes}\label{sec:properties}

	In this section, we discuss properties of palettes contained in a blow-up of a given palette~$P$ which will be necessary for the stability argument in Section~\ref{sec:stability}.
	We begin by examining the interplay between~$P$ and~$
	\rev(P)$.
	Let $C(P)=C(
	\rev(P))=[t]$.
	The first observation is that since~$(a,b,c)\in P$ if and only if~$(c,b,a)\in \rev(P)$, it follows that~$P$ and~$
	\rev(P)$ have the same Lagrange polynomial and, consequently, the same Lagrangian.
	Clearly, this still holds when inducing to any subset of colors.
	
	\begin{fact}\label{fact:lambdaPrevP}
		$\Lambda_{P[U]}=\Lambda_{\rev(P)[U]}$ for every $U\subseteq [t]$.
	\end{fact}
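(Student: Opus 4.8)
The plan is to observe that $P[U]$ and $\rev(P)[U]$ have literally the same Lagrange polynomial, so the equality of Lagrangians follows by taking maxima over the same simplex. Since there is no real content beyond a symmetry of the defining sum, the argument will be very short; the one point that genuinely needs to be recorded is that reversal commutes with restriction to a set of colors.

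First I would check that $\rev(P[U]) = \rev(P)[U]$. Indeed, a pattern $(a,b,c)$ and its reverse $(c,b,a)$ use exactly the same set of colors, so $(a,b,c) \in P$ with $\{a,b,c\} \subseteq U$ if and only if $(c,b,a) \in \rev(P)$ with $\{a,b,c\} \subseteq U$; both sides of the displayed identity are precisely the set of such triples $(c,b,a)$. Consequently the bijection $(i,j,k) \mapsto (k,j,i)$ between $P$ and $\rev(P)$ restricts to a bijection between the pattern sets $P[U]$ and $\rev(P)[U]$. Then, for any weighting $\bx = (x_i)_{i \in U} \in \bbS_{|U|}$, commutativity of multiplication of the reals $x_i,x_j,x_k$ gives
\begin{align*}
	\lambda_{P[U]}(\bx) = \sum_{(i,j,k) \in P[U]} x_i x_j x_k = \sum_{(i,j,k) \in P[U]} x_k x_j x_i = \sum_{(k,j,i) \in \rev(P)[U]} x_k x_j x_i = \lambda_{\rev(P)[U]}(\bx)\,.
\end{align*}
Hence $\lambda_{P[U]}$ and $\lambda_{\rev(P)[U]}$ are the same polynomial function on $\bbS_{|U|}$, and taking the maximum over $\bx \in \bbS_{|U|}$ yields $\Lambda_{P[U]} = \Lambda_{\rev(P)[U]}$.

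I do not expect any obstacle: the statement is a direct consequence of the fact that a monomial $x_i x_j x_k$ is unchanged under reordering its factors, together with the observation that inducing on a color subset and reversing patterns are independent operations. The only thing to be slightly careful about is not to conflate $\rev(P[U])$ with $\rev(P)[U]$ informally, which is exactly the verification carried out in the first step.
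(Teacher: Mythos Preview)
Your proof is correct and follows exactly the approach the paper takes: the paper simply notes (in the text preceding the Fact) that since $(a,b,c)\in P$ if and only if $(c,b,a)\in\rev(P)$, the two palettes have the same Lagrange polynomial, and that this clearly persists under restriction to any $U$. Your write-up just makes explicit the observation that $\rev(P[U])=\rev(P)[U]$ and the commutativity of multiplication, which is precisely the content behind the paper's one-line justification.
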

	
	It immediately follows that~$P$ is reduced if and only if~$\rev(P)$ is reduced.
    Another simple observation is that if~$S_P$ and~$S_{\rev(P)}$ are blow-ups of~$P$ and~$\rev(P)$ on the same set of colors~$C$ and with the same partition structure~$C=\bigcup_{i=1}^t V_i$, then~$e(S_P)=e(S_{\rev(P)})$.
	In particular, this implies that the maximum blow-up of~$P$ on~$n$ colors has the same number of patterns as the maximum blow-up of~$\rev(P)$ on~$n$ colors (something we have already used in the ``in particular'' part of Theorem~\ref{thm:expal_eq_blowup}).
	
	The following observation shows that the Lagrangian of palettes is monotone with respect to homomorphisms. 
	
	\begin{obs}\label{obs:QcontainP}
		If~$Q$ and~$P$ are palettes and there is a homomorphism $\psi:Q \to P$, then $$\Lambda_Q\leq \Lambda_P\,.$$
	\end{obs}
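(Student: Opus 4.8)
The plan is to take an optimal weighting for $Q$ and push it forward along $\psi$ to produce a weighting of $P$ with at least the same value. First I would fix a weighting $\bx=(x_i)_{i\in C(Q)}\in\bbS_{c(Q)}$ attaining $\lambda_Q(\bx)=\Lambda_Q$, and define $\by=(y_j)_{j\in C(P)}$ by $y_j:=\sum_{i\in\psi^{-1}(j)}x_i$. Since $\psi$ is a function, its fibers $\psi^{-1}(j)$, $j\in C(P)$, partition $C(Q)$, so $\sum_{j\in C(P)}y_j=\sum_{i\in C(Q)}x_i=1$ and each $y_j\in[0,1]$; hence $\by\in\bbS_{c(P)}$ is a legitimate weighting of $P$.

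Next I would expand $\lambda_P(\by)$. Multiplying out and using the definition of $\by$, one gets
\[
\lambda_P(\by)=\sum_{(j_1,j_2,j_3)\in P}\ \sum_{i_1\in\psi^{-1}(j_1)}\sum_{i_2\in\psi^{-1}(j_2)}\sum_{i_3\in\psi^{-1}(j_3)}x_{i_1}x_{i_2}x_{i_3}=\sum_{\substack{(i_1,i_2,i_3)\in C(Q)^3\\(\psi(i_1),\psi(i_2),\psi(i_3))\in P}}x_{i_1}x_{i_2}x_{i_3}\,.
\]
Because $\psi$ is a palette homomorphism, every pattern $(i_1,i_2,i_3)\in Q$ satisfies $(\psi(i_1),\psi(i_2),\psi(i_3))\in P$, so $Q$ is contained in the index set of the last sum. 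As all summands $x_{i_1}x_{i_2}x_{i_3}$ are nonnegative, discarding the terms coming from triples outside $Q$ only decreases the sum, giving $\lambda_P(\by)\ge\sum_{(i_1,i_2,i_3)\in Q}x_{i_1}x_{i_2}x_{i_3}=\lambda_Q(\bx)=\Lambda_Q$. Finally $\Lambda_P=\max_{\bz\in\bbS_{c(P)}}\lambda_P(\bz)\ge\lambda_P(\by)\ge\Lambda_Q$, as claimed.

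There is essentially no obstacle in this argument; the only points that need (entirely routine) care are verifying that $\by$ is a genuine weighting — i.e.\ that the fibers of $\psi$ partition $C(Q)$, so the total mass is preserved — and invoking nonnegativity of the monomials $x_{i_1}x_{i_2}x_{i_3}$, which is precisely what allows us to drop the extra terms indexed by triples not in $Q$.
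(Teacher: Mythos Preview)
Your proof is correct and follows essentially the same approach as the paper: take an optimal weighting $\bx$ for $Q$, push it forward along $\psi$ to a weighting $\by$ of $P$, expand $\lambda_P(\by)$, and use the homomorphism property together with nonnegativity of the monomials to bound it below by $\lambda_Q(\bx)=\Lambda_Q$. The paper's argument is identical in structure and detail.
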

	\begin{proof}
		Let $\bx \in \bbS_{c(Q)}$ be a weighting of $Q$ with~$\lambda_Q(\bx) = \Lambda_Q$.
		Then, for~$d \in C(P)$, define 
		\begin{align*}
			y_d = \sum_{a \in \psi^{-1}(d)}x_a.
		\end{align*}
		Let $\by=(y_d)_{d\in C(P)}$ and note that~$\by \in \bbS_{c(P)}$ because every~$a\in C(Q)$ is in the preimage of exactly one~$d\in C(P)$.
		Since~$\psi$ is a homomorphism and~$\bx \in \bbS_{c(Q)}$, it follows that
		\begin{align*}
			\Lambda_P &\geq \lambda_P(\by) = \sum_{(d,e,f) \in P}y_dy_ey_f \\
			&= \sum_{(d,e,f) \in P}\left(\sum_{a \in \psi^{-1}(d)}x_a \right)\left(\sum_{b \in \psi^{-1}(e)}x_b \right)\left(\sum_{c \in \psi^{-1}(f)}x_c \right)\\
			&\geq \sum_{(a,b,c) \in Q}x_ax_bx_c = \Lambda_Q.
		\end{align*}
		This concludes the proof.
	\end{proof}
	
	Recall that a palette~$P$ is \emph{reduced} if for every proper subpalette~$Q\subsetneq P$ we have~$\Lambda_Q<\Lambda_P$.
	We conclude our discussion on the interplay between~$P$ and~$\rev(P)$ by showing that if~$P$ is reduced and~$P\not\cong \rev(P)$, then~$P$ is not contained in a blow-up of~$\rev(P)$ and~$\rev(P)$ is not contained in a blow-up of~$P$.

	\begin{prop}\label{prop:PandrevP}
		Let $P$ be reduced. If there is a homomorphism $\psi: P \to \rev(P)$, then $P\cong \rev(P)$.
	\end{prop}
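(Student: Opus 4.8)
The plan is to combine Observation~\ref{obs:QcontainP} and Fact~\ref{fact:lambdaPrevP} with the definition of a reduced palette to force $\psi$ to be a bijection of $C(P)=C(\rev(P))=[t]$, and then to upgrade a bijective homomorphism to an isomorphism via a count of patterns.

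First I would look at the image $U:=\psi(C(P))\subseteq[t]$. Every pattern of the form $(\psi(a),\psi(b),\psi(c))$ with $(a,b,c)\in P$ uses only colors in $U$, so $\psi$ is in fact a homomorphism from $P$ into the induced subpalette $\rev(P)[U]$. Hence Observation~\ref{obs:QcontainP} gives $\Lambda_P\leq\Lambda_{\rev(P)[U]}$, while Fact~\ref{fact:lambdaPrevP} yields $\Lambda_{\rev(P)[U]}=\Lambda_{P[U]}$. Since $P[U]\subseteq P$ we also have $\Lambda_{P[U]}\leq\Lambda_P$, so $\Lambda_{P[U]}=\Lambda_P$. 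As $P$ is reduced, $P[U]$ cannot be a proper subpalette of $P$; comparing color counts, this forces $U=C(P)$. Thus $\psi$ is surjective and therefore, $[t]$ being finite, a bijection.

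To conclude, note that since $\psi$ is a bijection on colors, the induced assignment $(a,b,c)\mapsto(\psi(a),\psi(b),\psi(c))$ is an injection from the set of patterns of $P$ into the set of patterns of $\rev(P)$. The reversal map $(a,b,c)\mapsto(c,b,a)$ is a bijection between $P$ and $\rev(P)$, so $e(P)=e(\rev(P))$, and hence the above injection is a bijection. Therefore every pattern of $\rev(P)$ equals $(\psi(a),\psi(b),\psi(c))$ for some $(a,b,c)\in P$, which is exactly the statement that $\psi^{-1}\colon\rev(P)\to P$ is a homomorphism. So $\psi$ is an isomorphism and $P\cong\rev(P)$.

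There is no deep obstacle; the argument is essentially bookkeeping around the two cited results. The two points that need a little care are (i) checking that co-restricting $\psi$ to its image keeps it a homomorphism, so that Observation~\ref{obs:QcontainP} applies to $\rev(P)[U]$ rather than to all of $\rev(P)$, and (ii) the passage from ``bijective homomorphism'' to ``isomorphism'', which fails for general relational structures but holds here because $P$ and $\rev(P)$ carry the same number of patterns.
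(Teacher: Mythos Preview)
Your proof is correct and follows essentially the same approach as the paper: both use Observation~\ref{obs:QcontainP} and Fact~\ref{fact:lambdaPrevP} together with the definition of reduced to force $\psi$ to be surjective (hence bijective), and then observe that a bijective homomorphism between palettes with the same number of patterns must be an isomorphism. Your version simply spells out the pattern-counting step in (ii) more explicitly than the paper does.
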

	
	\begin{proof}
		First, we check that $\psi$ must be surjective.
		Indeed, if $\text{Im}(\psi)\subsetneq C(\rev(P))$, then
		\begin{align*}
			\Lambda_P \leq \Lambda_{\rev(P)[\text{Im}(\psi)]} = \Lambda_{P[\text{Im}(\psi)]} < \Lambda_{P},
		\end{align*}
		where the first inequality follows from Observation~\ref{obs:QcontainP}, the equality follows from Fact~\ref{fact:lambdaPrevP}, and the final inequality follows from $P$ being reduced.
		This is a contradiction, and therefore~$\psi$ is surjective.
		Since~$P$ and~$\rev(P)$ have the same number of colors and patterns, the surjective homomorphism~$\psi$ must be an isomorphism, concluding the proof.
	\end{proof}
	
	The next two results deal with properties of reduced palettes~$P$.
	Roughly speaking, the first one states that any palette~$Q$ which is contained in a blow-up of~$P$ and has density~$d(Q)=e(Q)/c(Q)^3$ very close to~$\Lambda_P$ must have a positive proportion of colors in each class of the partition structure.
	
	\begin{prop}\label{prop:closetomax}
		Given a reduced palette~$P$ with~$t$ colors, there are~$\beta=\beta_{\ref{prop:closetomax}}>0$ and~$\epsilon=\epsilon_{\ref{prop:closetomax}}>0$ such that the following holds.
		Suppose that~$Q$ is a palette that is contained in a blow-up~$P'$ of~$P$ with partition structure $C(P')=\bigcup_{i=1}^t V_i$ and~$c(P')=c(Q):=n$.
		If in addition we have~$d(Q) \geq \Lambda_P - \epsilon$, then $|V_i|\geq \beta n$ for every $i\in[t]$.  
	\end{prop}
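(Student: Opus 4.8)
The plan is to turn the statement into an elementary perturbation estimate for the Lagrange polynomial~$\lambda_P$, the point being that reducedness of~$P$ guarantees a fixed ``gap'' $\Lambda_P-\Lambda_{P[[t]\setminus\{i\}]}>0$ whenever a single color~$i$ is deleted. I would start from the observation that if~$Q$ is contained in a blow-up~$P'$ of~$P$ with partition structure $C(P')=\bigcup_{i=1}^t V_i$ and $c(P')=c(Q)=n$, then $e(Q)\le e(P')=\sum_{(i,j,k)\in P}|V_i||V_j||V_k|$. Setting $x_i:=|V_i|/n$, so that $\bx=(x_i)_{i\in[t]}\in\bbS_t$, this reads
\[
	d(Q)=\frac{e(Q)}{n^3}\le\lambda_P(\bx).
\]
Hence it is enough to prove that if $\lambda_P(\bx)$ is close to $\Lambda_P$ for some $\bx\in\bbS_t$, then every coordinate of $\bx$ is bounded below.

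Next I would extract the gap. For each $i\in[t]$, the induced subpalette $P[[t]\setminus\{i\}]$ is contained in $P$ via the inclusion of colors, and it is a \emph{proper} subpalette since it has strictly fewer colors and so cannot be isomorphic to $P$. (Here one uses that a reduced palette with $t\ge1$ colors is nonempty: if it had no pattern, deleting a color would leave the Lagrangian unchanged at~$0$, contradicting reducedness; in particular $\Lambda_P>0$.) Reducedness therefore yields $\Lambda_{P[[t]\setminus\{i\}]}<\Lambda_P$ for every $i$, so we may put
\[
	\epsilon_0:=\Lambda_P-\max_{i\in[t]}\Lambda_{P[[t]\setminus\{i\}]}>0,
\]
a quantity depending only on $P$.

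For the perturbation step, fix $i_0\in[t]$ and split $\lambda_P(\bx)=\sum_{(i,j,k)\in P}x_ix_jx_k$ according to whether a pattern uses color $i_0$ or not. The patterns avoiding $i_0$ contribute exactly $\lambda_{P[[t]\setminus\{i_0\}]}$ evaluated on $\bx$ restricted to $[t]\setminus\{i_0\}$; since $\lambda$ is homogeneous of degree~$3$ with nonnegative coefficients and those restricted weights sum to $1-x_{i_0}\le1$, this is at most $\Lambda_{P[[t]\setminus\{i_0\}]}\le\Lambda_P-\epsilon_0$. The patterns using $i_0$ contribute at most $3x_{i_0}$: grouping by the slot in which $i_0$ occurs and relaxing the remaining two colors to range over all of $[t]$, each of the three groups is bounded by $x_{i_0}\bigl(\sum_j x_j\bigr)\bigl(\sum_k x_k\bigr)=x_{i_0}$. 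Combining,
\[
	d(Q)\le\lambda_P(\bx)\le\Lambda_P-\epsilon_0+3x_{i_0}.
\]
Now set $\beta:=\epsilon_0/6$ and $\epsilon:=\epsilon_0/2$. The hypothesis $d(Q)\ge\Lambda_P-\epsilon$ then forces $3x_{i_0}\ge\epsilon_0-\epsilon=\epsilon_0/2$, i.e.\ $|V_{i_0}|=x_{i_0}n\ge\beta n$; since $i_0$ was arbitrary, this is the desired conclusion.

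There is no serious obstacle here: the only things that require attention are the quantifier order — $\epsilon_0$, hence $\beta$ and $\epsilon$, must be read off from $P$ alone and not from $Q$ or $n$ — and the bookkeeping that $P[[t]\setminus\{i\}]$ genuinely counts as a proper subpalette so that the definition of ``reduced'' applies. One could instead argue non-quantitatively, noting that $\lambda_P$ is continuous on the compact simplex $\bbS_t$ and (by the same gap argument) attains its maximum only at points with full support, but the explicit estimate above is shorter and produces concrete constants.
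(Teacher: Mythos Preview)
Your proof is correct and takes a somewhat different route from the paper's. Both arguments rest on the same core fact---that reducedness of~$P$ gives a uniform gap $\epsilon_0 := \Lambda_P - \max_{i}\Lambda_{P[[t]\setminus\{i\}]}>0$---but the paper proceeds by contradiction: it posits a sequence of near-counterexamples~$Q^{(m)}$ with some part~$V_{i^{(m)}}^{(m)}$ of relative size below~$1/m$, passes to a subsequence with constant index~$i^{(m)}=t$, and then chains
\[
\Lambda_{P[[t-1]]}\ \ge\ \Lambda_{Q^{(m)}[C^{(m)}\setminus V_t^{(m)}]}\ \ge\ d(Q^{(m)}[C^{(m)}\setminus V_t^{(m)}])\ \ge\ d(Q^{(m)})-O(1/m)\ \ge\ \Lambda_P-O(1/m),
\]
which for large~$m$ contradicts the gap. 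You instead bound $\lambda_P(\bx)$ directly by splitting it into patterns avoiding~$i_0$ (at most $\Lambda_P-\epsilon_0$) and patterns using~$i_0$ (at most $3x_{i_0}$), and read off explicit constants $\beta=\epsilon_0/6$, $\epsilon=\epsilon_0/2$. Your direct perturbation estimate is cleaner and quantitative; the paper's sequential argument trades away explicit constants for slightly less bookkeeping.
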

	\begin{proof}
		Suppose for the sake of contradiction, that the statement does not hold.
		Then, for each integer $m \in \NN$, there exist a blow-up~$P'^{(m)}$ of~$P$ with partition structure $C^{(m)}:=C(P'^{(m)})=\bigcup_{i=1}^tV_i^{(m)}$ and a palette~$Q^{(m)}\subseteq P'^{(m)}$ such that $d(Q^{(m)}) \geq \Lambda_P - \frac{1}{m}$ and
		\begin{align}\label{eq:negation}
			|V_{i^{(m)}}^{(m)}|< \frac{1}{m} c(Q^{(m)})
		\end{align}
		for some sequence of indices $i^{(m)} \in [t]$. By applying the pigeonhole principle, we obtain a subsequence (which we reindex using $m$ again) such that~$i^{(m)}$ is constant, say~$i^{(m)}=t$.
		Then, for each~$m \in \NN$, we have
		\begin{align}\label{eq:lambdainducedont-1}
			\Lambda_{P[[t-1]]} \geq \Lambda_{Q^{(m)}[C^{(m)}\setminus V_t^{(m)}]} \geq d(Q^{(m)}[C^{(m)}\setminus V^{(m)}_t]) \geq d(Q^{(m)}) - \frac{1}{m} \geq \Lambda_{P}-\frac{2}{m},
		\end{align}
		where the first inequality follows from Observation~\ref{obs:QcontainP} applied to $P[[t-1]]$ and $Q^{(m)}[C^{(m)}\setminus V^{(m)}_t]$, the second inequality comes from~(\ref{eq:lambda_density}), the third from~(\ref{eq:negation}), and the fourth from the choice of~$Q^{(m)}$.
		Since~$P$ is reduced, we must have~$\Lambda_{P[[t-1]]}< \Lambda_{P}$, which contradicts~\eqref{eq:lambdainducedont-1} for~$m$ large enough.
	\end{proof}
	
	Given a palette~$P$ and two (distinct) colors~$a, b \in C(P)$, we say that~$b$ \emph{dominates}~$a$ if, for every pattern~$p\in P$ containing~$a$, any substitution of the color~$a$ with the color~$b$ results in a pattern~$p' \in P$.
    As an example, suppose that~$1,2 \in C(P)$ and~$2$ dominates~$1$.
    Then~$(1,1,x) \in P$ implies that~$(1,2,x)$,~$(2,1,x)$, and~$(2,2,x)$ are all in~$P$.
    Although it is straightforward to verify the following lemma, we include a proof for the convenience of the reader.

	\begin{lemma}\label{lem:domination}
		For a reduced palette~$P$ with~$C(P)=[t]$ there are no~$a,b \in [t]$ such that $b$ dominates $a$.
	\end{lemma}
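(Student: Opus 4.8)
The plan is to derive a contradiction directly from reducedness: if $b$ dominates $a$, then collapsing the color $a$ onto $b$ should produce a palette homomorphism from $P$ onto the induced subpalette $P[[t]\setminus\{a\}]$, and then Observation~\ref{obs:QcontainP} would force $\Lambda_{P[[t]\setminus\{a\}]}\ge\Lambda_P$, contradicting the fact that $P$ is reduced (so that deleting a color must strictly drop the Lagrangian).

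Concretely, I would argue by contradiction: suppose there are distinct $a,b\in[t]$ with $b$ dominating $a$, and consider the map $\psi\colon [t]\to [t]\setminus\{a\}$ defined by $\psi(a)=b$ and $\psi(c)=c$ for $c\ne a$. The first step is to check that $\psi$ is a homomorphism from $P$ to $P[[t]\setminus\{a\}]$. If $p\in P$ does not contain the color $a$, then $\psi(p)=p$, which is a pattern of $P$ using only colors of $[t]\setminus\{a\}$, hence lies in $P[[t]\setminus\{a\}]$. If $p\in P$ does contain $a$, then $\psi(p)$ is precisely the pattern obtained from $p$ by replacing \emph{every} occurrence of $a$ by $b$; by the definition of domination this is again a pattern of $P$, and since all occurrences of $a$ have been removed it uses only colors of $[t]\setminus\{a\}$, so $\psi(p)\in P[[t]\setminus\{a\}]$.

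The second step is routine: Observation~\ref{obs:QcontainP} applied to $\psi\colon P\to P[[t]\setminus\{a\}]$ gives $\Lambda_P\le \Lambda_{P[[t]\setminus\{a\}]}$. On the other hand, $P[[t]\setminus\{a\}]$ has strictly fewer colors than $P$, hence it is a proper subpalette, so reducedness of $P$ yields $\Lambda_{P[[t]\setminus\{a\}]}<\Lambda_P$; combining the two inequalities is the desired contradiction. There is no real obstacle here — the only point requiring a moment's care is the verification that ``substituting $a$ by $b$'' in the domination hypothesis covers the simultaneous replacement of all occurrences of $a$ (so that the image pattern genuinely avoids the color $a$ and lands in the induced subpalette); this is exactly the content of the parenthetical example following the definition of domination, so the argument goes through cleanly.
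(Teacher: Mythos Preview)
Your proof is correct. It is conceptually the same idea as the paper's argument --- shift all weight from $a$ to $b$ and observe that the Lagrangian does not decrease, contradicting reducedness --- but you package it more cleanly by invoking Observation~\ref{obs:QcontainP} directly rather than redoing the computation. The paper instead expands $\lambda_P(\bz)$ explicitly in powers of $z_a$ and $z_b$, verifies coefficient inequalities $f_b\ge f_a$, $f_{b,b}\ge f_{a,a}$, $2f_{b,b}\ge f_{a,b}$ coming from domination, and then checks by hand that the weighting $\by$ with $y_a=0$, $y_b=x_a+x_b$ satisfies $\lambda_P(\by)\ge\lambda_P(\bx)$. This forces the paper to first dispose of the degenerate case $(c,c,c)\in P$ (which would introduce a cubic term not covered by their decomposition), whereas your homomorphism argument handles all patterns uniformly. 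Your route is shorter and arguably the ``right'' way to see the lemma once Observation~\ref{obs:QcontainP} is in hand.
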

    
        \begin{proof}
        First note that we may assume that there is no~$c\in[t]$ with~$(c,c,c)\in P$.
        Otherwise~$d(P[\{c\}])=1$, whence~$P$ being induced would imply~$C(P)=\{c\}$, and we would be done.
        
        Now suppose, for the sake of contradiction, that there are~$a,b\in[t]$ such that~$b$ dominates~$a$. 
        For~$\bz\in\bbS_{t}$ we can write $$\lambda_P(\bz) = z_af_a(\bz')+z_bf_b(\bz')+z_a^2f_{a,a}(\bz')+z_b^2f_{b,b}(\bz')+z_az_bf_{a,b}(\bz')+g(\bz')$$
        for some polynomials~$f_a$,~$f_b$,~$f_{a,a}$,~$f_{b,b}$,~$f_{a,b}$, and~$g$ in~$\bz'=(z_c)_{c\in[t]\setminus\{a,b\}}$.
        Let~$\bx$ be an optimal weighting of~$P$ witnessing~$\lambda_P(\bx)=\Lambda_P$.
        The hypothesis that~$b$ dominates~$a$ implies that~$f_b \geq f_a$, as well as~$f_{b,b} \geq f_{a,a}$, and~$2f_{b,b} \geq f_{a,b}$, where the~$2$ appears since~$(a,b,x)$ and~$(b,a,x)$ are both `covered' by~$(b,b,x)$. 
        We claim that the weighting~$\by \in \bbS_t$ given by~$y_a=0$,~$y_b=x_a+x_b$, and~$y_k=x_k$ for $k\in[t]\setminus\{a,b\}$ satisfies
		\begin{align}\label{eq:domination}
			\lambda_P(\by)\geq \lambda_P(\bx)=\Lambda_P.
		\end{align}
		Indeed,
            \begin{align*}
                \lambda_P(\by) &= (x_a+x_b)f_b(\by')+(x_a+x_b)^2f_{b,b}(\by')+g(\by')\\
                &\geq x_af_b(\bx')+x_bf_b(\bx')+x_b^2f_{b,b}(\bx')+x_a^2f_{b,b}(\bx')+2x_ax_bf_{b,b}(\bx')+g(\bx')\\
                & \geq \lambda_P(\bx)\,,
            \end{align*}
		where~$\by'=(y_c)_{c\in[t]\setminus\{a,b\}}=(x_c)_{c\in[t]\setminus\{a,b\}}=\bx'$.
		On the other hand, note that $y_a=0$ and therefore $\lambda_P(\by)\leq \Lambda_{P[[t]\setminus\{a\}]}<\Lambda_P$, where we use the fact that $P$ is reduced in the last inequality.
        This contradicts (\ref{eq:domination}), which concludes the proof.
        \end{proof}
		
	
	We finish the section by introducing a concept used in~\cite{P:12} that will be important in the stability argument.
    Let~$P$ be a palette with~$C(P)=[t]$.
    A palette~$R$ contained in a blow-up of~$P$ is \emph{rigid (with respect to~$P$)} if there exists a partition of its colors~$C(R)=\bigcup_{i=1}^t U_i$ satisfying the following: If~$R$ is contained in a blow-up~$S$ of~$P$ with partition structure~$C(S)=\bigcup_{i=1}^t V_i$, then there exists an automorphism~$h:[t] \rightarrow [t]$ of~$P$ such that~$U_i\subseteq V_{h(i)}$ for $i \in [t]$.
    In other words, a palette~$R$ is rigid if there is essentially a unique way to embed it in a blow-up of~$P$.
    The next result shows that if~$P$ is reduced, then rigid palettes always exist for sufficiently many colors.
    Recall that a palette is non-degenerate if every pattern has exactly~$3$ colors.
	
	\begin{lemma}\label{lem:rigidity}
		Let $P$ be a reduced palette with color set $[t]$. Then there exists an integer $M:=M_{\ref{lem:rigidity}}(P)$ and a rigid palette $R\subseteq [M]^3$ with partition structure~$[M]=\bigcup_{i=1}^t U_i$ such that
		\begin{enumerate}
			\item[$(i)$] $R$ is non-degenerate.
			\item[$(ii)$] For $i \in [t]$, we have $|U_i|\geq 3t$.
			\item[$(iii)$] Any blow-up $R'$ of $R$ on $M+1$ colors is rigid.
		\end{enumerate}
		Moreover, if $P\not \cong \rev(P)$, then the palette $R$ is not contained in a blow-up of $\rev(P)$.
	\end{lemma}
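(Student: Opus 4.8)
The plan is to let $R$ be a sufficiently large \emph{non-degenerated blow-up} of $P$. Concretely, set $t:=c(P)$, let $N\geq 3t$ be a large integer (to be fixed), put $M:=tN$, split $[M]$ into classes $U_1,\dots,U_t$ of size $N$, write $c(x)=i$ when $x\in U_i$, and let
\[
R:=\bigl\{(x_1,x_2,x_3)\in[M]^3 : x_1,x_2,x_3\text{ pairwise distinct and }(c(x_1),c(x_2),c(x_3))\in P\bigr\}.
\]
By construction $R$ is non-degenerate, giving \textit{(i)}, and each $|U_i|=N\geq 3t$, giving \textit{(ii)}; the map $U_i\mapsto i$ is a homomorphism $R\to P$, so $R$ lies in a blow-up of $P$ and it is meaningful to ask whether it is rigid for the partition structure $[M]=\bigcup_i U_i$. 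Two facts about $R$ are used throughout: $(1)$ spreading an optimal weighting of $P$ uniformly inside each $U_i$ shows $\Lambda_R\geq\Lambda_P-O_P(1/N)$, while $\Lambda_R\leq\Lambda_P$ by Observation~\ref{obs:QcontainP}; and $(2)$ any two colours of a common class $U_i$ are twins in $R$ away from the $O(N)$ patterns containing both of them, i.e.\ if $(x,\eta,\zeta)\in R$ and $x'\in U_i\setminus\{\eta,\zeta\}$ then $(x',\eta,\zeta)\in R$.

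The heart of the proof is the following claim, which I would establish for every reduced palette $Q$ with $\Lambda_Q=\Lambda_P$ (this covers $Q=P$ and, via Fact~\ref{fact:lambdaPrevP} and the remark after it, $Q=\rev(P)$): \emph{for $N$ large enough, every homomorphism $\phi\colon R\to Q$ is constant on each class $U_i$, and $i\mapsto\phi(U_i)$ carries $P$ into $Q$.} The intended argument runs as follows. Applying Observation~\ref{obs:QcontainP} to the homomorphism $\phi\colon R\to\phi(R)$ gives $\Lambda_{\phi(R)}\geq\Lambda_R\geq\Lambda_P-O_P(1/N)=\Lambda_Q-O_P(1/N)$; since $Q$ is reduced it has only finitely many proper subpalettes, each with Lagrangian bounded away from $\Lambda_Q$, so for $N$ large $\phi(R)$ cannot be a proper subpalette of $Q$, i.e.\ $\phi$ is onto $C(Q)$ and onto the patterns of $Q$. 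Now suppose $\phi$ is not constant on some class, say $\phi(x)=a\neq a'=\phi(x')$ with $x,x'\in U_1$. Take any pattern $p\in Q$ in which the colour $a$ occurs; since every pattern of $Q$ is realised by a pattern of $R$, and using the twin property together with the slack $|U_1|\geq 3t$ to relocate a realisation of $p$ so that its $a$-coordinate sits in $U_1$ at $x$ (this is where the bookkeeping of degenerate patterns of $P$ and of the collision patterns enters), swapping $x$ for $x'$ shows that the pattern obtained from $p$ by replacing that occurrence of $a$ with $a'$ again lies in $Q$. Hence $a'$ dominates $a$ in $Q$, contradicting Lemma~\ref{lem:domination}. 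Therefore $\phi$ is block-constant; writing $h(i):=\phi(U_i)$ and choosing distinct representatives of the classes involved in a given pattern of $P$ — two from a class when that pattern repeats a colour, which is possible because a reduced $P$ with $t\geq 2$ has no pattern $(c,c,c)$, exactly as at the start of the proof of Lemma~\ref{lem:domination} — shows $h(P)\subseteq Q$, proving the claim. I expect this block-constancy step to be the main obstacle: one must play the rigidity (``twin'') symmetry of $R$ against the asymmetry that reducedness forces on $Q$, while keeping careful track of the degenerate patterns of $P$ and of the patterns lost when passing to the non-degenerated blow-up.

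Granting the claim, everything else is short. Rigidity of $R$ itself is the claim with $Q=P$: if $R$ is contained in a blow-up $S$ of $P$ with classes $V_1,\dots,V_t$, then the associated homomorphism $\phi\colon R\to P$ with $V_j=\phi^{-1}(j)$ is block-constant, $h(i):=\phi(U_i)$ is onto $[t]$ hence bijective, $h(P)\subseteq P$ forces $h(P)=P$ so $h$ is an automorphism of $P$, and $U_i\subseteq\phi^{-1}(h(i))=V_{h(i)}$. For \textit{(iii)}, a blow-up $R'$ of $R$ on $M+1$ colours is, after deleting the $O(N)$ patterns that meet both halves of the split colour, a non-degenerated blow-up of $P$ all of whose classes still have size $\geq 3t$; for $N$ large the same argument (now with arbitrary class sizes $\geq 3t$) shows that any homomorphism $R'\to P$ is block-constant with automorphism block-map, i.e.\ $R'$ is rigid. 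Finally, for the ``moreover'' part, assume $P\not\cong\rev(P)$ but $R$ is contained in a blow-up of $\rev(P)$, say via a homomorphism $\psi\colon R\to\rev(P)$; the claim with $Q=\rev(P)$ (which is reduced with $\Lambda_{\rev(P)}=\Lambda_P$) makes $\psi$ block-constant, so $i\mapsto\psi(U_i)$ is a homomorphism $P\to\rev(P)$, and Proposition~\ref{prop:PandrevP} yields $P\cong\rev(P)$, a contradiction. Choosing $N=N_0(P)$ large enough for all of the above fixes $M=tN$ and completes the proof.
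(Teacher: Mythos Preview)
Your construction of $R$ and the overall strategy—using the Lagrangian gap to force surjectivity and then invoking Lemma~\ref{lem:domination}—track the paper's approach closely. However, the crucial ``relocation'' step in your block-constancy argument has a real gap.

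You want to show that if $\phi(x)=a\neq a'=\phi(x')$ with $x,x'\in U_1$, then $a'$ dominates $a$ in $Q$. Given a pattern $p=(a,b,c)\in Q$, surjectivity on patterns gives some $(y_1,y_2,y_3)\in R$ with $\phi(y_i)$ matching $p$. But $y_1$ lies in some class $U_j$, and nothing forces $j=1$. The twin property only lets you move \emph{within} a class, so you cannot relocate the $a$-coordinate to $x\in U_1$; the ``slack $|U_1|\geq 3t$'' is irrelevant here, since you would need slack in $\phi^{-1}(a)\cap U_1$, which you do not control. Concretely, if $(a,b,c)$ is realised only by triples whose first colour lies in $U_2$, swapping $x$ for $x'$ tells you nothing about $(a',b,c)$. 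So the domination claim does not follow.

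The paper sidesteps this by \emph{not} attempting domination in $Q$ via surjectivity. Instead it uses the bound $|U_i|\geq 3t$ for pigeonhole \emph{inside} each $U_i$: pick $h(i)\in[t]$ so that at least three elements of $U_i$ land in the class $V_{h(i)}$, and set $Y_i:=\psi^{-1}(V_{h(i)})\cap U_i$. One first checks that $i\mapsto h(i)$ is a bijective endomorphism of $P$ (via the Lagrangian argument, as you do). Then, for a stray $x\in U_i$ with $\psi(x)\in V_j$, $j\neq h(i)$, one shows $j$ dominates $i$ \emph{in $P$}: for any $(i,k,\ell)\in P$ choose $b\in Y_k$, $c\in Y_\ell$ distinct from $x$, so that $(x,b,c)\in R$ \emph{directly} (it is a non-degenerate triple over a pattern of $P$), whence $(j,h(k),h(\ell))\in P$. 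The point is that the anchors $b,c$ are chosen in the \emph{correctly mapped} sets $Y_k,Y_\ell$, so no relocation across classes is needed. Your argument becomes correct once you introduce these $Y_i$'s; without them the step fails.
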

	
	\begin{proof}
		Since $P$ is reduced, there exists a real number $\delta>0$ such that $\Lambda_Q<\Lambda_P-\delta$ for every proper subset $Q\subsetneq P$. Let $\epsilon, \beta$ be the constants given by Proposition \ref{prop:closetomax}. We will choose a sufficiently large $M$ satisfying the following conditions:
		\begin{enumerate}
			\item[(a)] $M\gg \frac{1}{\varepsilon},\frac{1}{\delta},t,\frac{1}{\beta}$.
			\item[(b)] There exists a blow-up $\tilde{R}$ of $P$ on $M$ colors with partition structure $C(\tilde{R})=\bigcup_{i=1}^t U_i$ such that $d(\tilde{R})>\Lambda_P-\min\{\delta/2,\epsilon/2\}$.
		\end{enumerate}
		Note that condition (b) can always be satisfied because of (\ref{eq:lambda_blowup}).
        Let $R$ be the palette obtained by removing every degenerate edge from $\tilde{R}$.
        It is not difficult to see from conditions (a) and (b) that
		\begin{align}\label{eq:Rdense}
			d(R)\geq \frac{e(\tilde{R})-3M^2}{M^3}\geq d(\tilde{R})-3/M\geq \Lambda_P-\min\{\delta,\epsilon\}.
		\end{align}
		We claim that the palette $R$ is a rigid palette satisfying properties (i), (ii), and (iii).
		
		We first check properties (i) and (ii). Property (i) follows immediately from the construction since we deleted all degenerate edges from $\tilde{R}$.
        To see that property (ii) holds, note that by (\ref{eq:Rdense}), we have~$d(R)\geq \Lambda_P-\epsilon$.
        Hence, Proposition~\ref{prop:closetomax} gives us that~$|U_i|\geq \beta M\geq 3t$, where the last inequality holds due to our choice of~$M$ (condition (a)).
		
		We now proceed to prove that $R$ is rigid.
        Clearly,~$R$ is contained in a blow-up of~$P$ (namely~$\tilde{R}$).
        Let $S$ be a blow-up of $P$ with partition structure $C(S)=\bigcup_{j=1}^t V_j$ and let $\psi:C(R) \to C(S)$ be an embedding (i.e., an injective homomorphism) of $R$ into $S$.
		We define a mapping $h:[t]\rightarrow [t]$ by letting $h(i)$ be an arbitrary index in $[t]$ such that
		\begin{align*}
			|\psi(U_i)\cap V_{h(i)}|\geq 3
		\end{align*}
		for $i \in [t]$. Such a choice of $h$ always exists because $|U_i|\geq 3t$ for $i \in [t]$.
        Let $Y_i\subseteq U_i$ be the preimage of~$\psi(U_i)\cap V_{h(i)}$ under~$\psi$, i.e., the subset of~$U_i$ with~$\psi(Y_i)\subseteq V_{h(i)}$.
        Our goal is to prove that~$h:[t]\rightarrow [t]$ is an automorphism of~$P$ and that~$\psi(U_i)\subseteq V_{h(i)}$ for every~$i \in [t]$ (that is,~$Y_i=U_i$).
        We will do that in several claims.
		
		\begin{claim}\label{clm:Hhomo}
			The map $h$ is a homomorphism from $P$ to $P$.
		\end{claim}
		
		\begin{proof}
			Note that if~$(i_1,i_2,i_3)\in P$, then there is a pattern~$(a,b,c)\in Y_{i_1}\times Y_{i_2} \times Y_{i_3}$ in~$R$.
            Indeed, by definition,~$R$ contains all the non-degenerate patterns in~$U_{i_1}\times U_{i_2}\times U_{i_3}$.
            And since~$\vert Y_i\vert=|\psi(U_i)\cap V_{h(i)}|\geq 3$ for all~$i \in [t]$, even if $i_1=i_2=i_3$, there is some non-degenerate pattern in~$Y_{i_1}\times Y_{i_2} \times Y_{i_3}$.
            However, since~$\psi$ is a homomorphism, the pattern~$(\psi(a),\psi(b),\psi(c)) \in V_{h(i_1)}\times V_{h(i_2)}\times V_{h(i_3)}$ is a pattern of $S$.
            This implies that~$(h(i_1),h(i_2),h(i_3)) \in P$ and consequently,~$h:P\rightarrow P$ is a homomorphism.   
		\end{proof}
		
		\begin{claim}\label{clm:Hbijective}
			The homomorphism $h:[t]\rightarrow [t]$ of~$P$ is bijective.
		\end{claim}
		\begin{proof}
			We construct an auxiliary blow-up $S^h$ of $P$ with partition structure $C(S^h)=\bigcup_{j=1}^tV_j^h$ defined by
			\begin{align*}
				V_j^h=\bigcup_{i\in h^{-1}(j)} \psi(U_i).
			\end{align*}
			That is, the sets $V_j^h$ are the union of the images of $U_i$ such that $h(i)=j$. It is clear that $C(S^h)\subseteq C(S)$. Moreover, note that some of the sets $V_i^h$ might be empty.
			
			We claim that the map~$\psi: C(R)\rightarrow C(S^h)$ is an embedding of~$R$ into~$S^h$ such that~$\psi(U_i)\subseteq V_{h(i)}^h$.
            The latter part holds by definition.
            To see that it is an embedding, let~$(a,b,c)\in U_{i_1}\times U_{i_2}\times U_{i_3}$ be a pattern of~$R$.
            Then~$(i_1,i_2,i_3)\in P$ and hence, since~$h$ is a homomorphism, we have that $(h(i_1),h(i_2),h(i_3))\in P$.
            This implies that~$(\psi(a),\psi(b),\psi(c)) \in V_{h(i_1)}^h\times V_{h(i_2)}^h\times V_{h(i_3)}^h$ is a pattern of~$S^h$ and~$\psi:R\rightarrow S^h$ is an embedding.
			
			Suppose, for the sake of contradiction, that~$h$ is not bijective.
            Then there exists an index~$j \in [t]$ such that~$V_{j}^h=\emptyset$.
            This implies that there exists a homomorphism~$\phi: R\rightarrow P[[t]\setminus\{j\}]$.
            Hence, by (\ref{eq:lambda_density}), Observation~\ref{obs:QcontainP}, and the definition of~$\delta$, we have
			\begin{align*}
				d(R)\leq \Lambda_R\leq \Lambda_{P[[t]\setminus\{j\}]}<\Lambda_P-\delta,
			\end{align*}
			which contradicts (\ref{eq:Rdense}).
		\end{proof}
		
		The last two claims show that~$h:[t]\rightarrow [t]$ is an automorphism of~$P$.
        Suppose, without loss of generality, that~$h(i)=i$ (and henceforth we index both~$U_i$ and~$V_i$ by~$i \in [t]$).
		
		\begin{claim}\label{clm:fully_contained}
			For all~$i \in [t]$ we have~$\psi(U_i)\subseteq V_i$. 
		\end{claim}
		\begin{proof}
			Suppose to the contrary that there exist indices~$i, j$ and a color~$x\in U_i$ such that~$\psi(x)\in V_j$.
            We claim that~$j$ dominates~$i$ in~$P$. Let~$p \in P$ be a pattern containing the color~$i$.
            Suppose, without loss of generality, that~$p$ is of the form~$p=(i,k,\ell)$, where~$k,\ell \in [t]$ ($k$ and $\ell$ might be equal to~$i$ or~$j$).
            Then we can choose distinct~$b\in Y_k$ and~$c\in Y_{\ell}$ (which are also both distinct from~$x$) and so the triple~$(x,b,c)\in U_i\times Y_k\times Y_\ell$ is a pattern of~$R$.
            This implies that~$(\psi(x),\psi(b),\psi(c))\in V_j\times V_k \times V_\ell$ is a pattern of~$S$.
            Hence, by construction, we have~$(j,k,\ell) \in P$. 
            That is, by substituting the color~$i$ with the color~$j$, we still have a pattern of~$P$.
            However, since~$P$ is reduced, by Lemma~\ref{lem:domination} there are no pairs~$\{i,j\}$ with~$j$ dominating~$i$, which yields a contradiction.
		\end{proof}
		
		Claims \ref{clm:Hhomo}--\ref{clm:fully_contained} show that~$R$ is a rigid configuration.
        Next we prove that~$R$ satisfies property (iii).
        First observe that the above argument verifying the rigidity of~$R$ used only properties (i) and (ii) of Lemma~\ref{lem:rigidity} and (\ref{eq:Rdense}).
        Thus, it is sufficient to check these for any palette~$R'$ on $M+1$ colors obtained by taking a blow-up of~$R$.
        Viewing~$R'$ as a blow-up of~$P$, it has partition structure~$C(R')=\bigcup_{i=1}^t V_i'$, where~$|V_i|\leq |V_i'|\leq |V_i|+1$ for all~$i \in [t]$ and so~$R'$ inherits properties (i) and (ii) from~$R$.
        Moreover, by our choice of~$M$ in condition (a), we have
		\begin{align*}
			d(R')\geq \frac{e(\tilde{R})-3M^2}{(M+1)^3}\geq (d(\tilde{R})-3/M)\frac{M^3}{(M+1)^3}\geq \Lambda_P-\min\{\delta,\epsilon\}. 
		\end{align*}
		Thus, by the same proof, the palette $R'$ is rigid.
		
		Finally, to prove the moreover part, suppose that~$P\not \cong \rev(P)$ and that there exists a homomorphism~$\phi: R \rightarrow \rev(P)$.
        Let~$C(R)=\bigcup_{i=1}^t W_i$ be the partition structure of~$R$ as a subpalette of a blow-up of~$\rev(P)$, i.e.,~$W_i=\phi^{-1}(i)$.
        We construct a map~$\xi:[t]\rightarrow [t]$ by letting~$\xi(i)$ be an index such that
		\begin{align*}
			|U_i \cap W_{\xi(i)}|\geq 3. 
		\end{align*}
		Such an index always exists since~$|U_i|\geq 3t$.
        For~$i \in [t]$ let~$Z_i=U_i\cap W_{\xi(i)}$.
        We claim that~$\xi$ is a homomorphism from~$P$ to~$\rev(P)$.
        Let~$(i_1,i_2,i_3)\in P$ be a pattern.
        Since~$R$ is obtained by deleting just the non-degenerate edges of a blow-up of~$P$ and~$|Z_{i_1}|, |Z_{i_2}|, |Z_{i_3}|\geq 3$, there exists a pattern~$(a,b,c)\in Z_{i_1}\times Z_{i_2}\times Z_{i_3}$.
        This implies that~$(a,b,c)\in W_{\xi(i_1)}\times W_{\xi(i_2)}\times W_{\xi(i_3)}$ and consequently~$(\xi(i_1),\xi(i_2),\xi(i_3))\in \rev(P)$.
        Hence,~$\xi:P\rightarrow \rev(P)$ is a homomorphism.
        A contradiction now follows from Proposition~\ref{prop:PandrevP} and the fact that~$P$ is reduced.
        This concludes the proof of Lemma~\ref{lem:rigidity}.
	\end{proof}

	\section{Stability argument}\label{sec:stability}
	
	The main goal of this section is to provide a proof of Theorem~\ref{thm:expal_eq_blowup}.
	We remark that our approach is very similar to the approach in~\cite{P:12}, but adapted to our needs.
	We begin with an outline of the proof.
	For a reduced palette~$P \subseteq [t]^3$ with $t$ colors, define the family~$\cF(P)$ of unpaintable~$3$-graphs as follows:
	\begin{align}\label{eq:F_P}
		\cF(P) := \{F : \text{$F$ is a $3$-graph and }P \text{ is $F$-deficient}\}\,.
	\end{align}
	The first observation is that if~$Q$ is an~$\cF(P)$-deficient palette, then~$Q$ is contained in a blow-up of~$P$ or in a blow-up of~$\text{rev}(P)$ (see Lemma \ref{lem:distinguish_palette}). 
	Unfortunately, the family~$\cF(P)$ might be infinite in size and hence cannot be used directly as a witness for Theorem~\ref{thm:expal_eq_blowup}.
	To circumvent this issue, we choose an appropriate integer~$M$ and truncate the family~$\cF(P)$ to the subfamily~$\cF(P)_M$ of~$3$-graphs with at most~$M$ vertices, i.e.,
	\begin{align}\label{eq:FP_M}
		\cF(P)_M := \{F \in \cF(P) : v(F) \leq M\}\,.
	\end{align}
	Let~$Q$ be an~$\cF(P)_M$-deficient palette that maximizes the number of patterns among all~$\cF(P)_M$-deficient palettes; in other words, suppose that~$Q \in \EXpal(n,\cF(P)_M)$. 
	An application of the removal lemma for palettes (see Lemma~\ref{lem:palette_removal}) implies that~$Q$ is very close to a blow-up of~$P$ or a blow-up of~$\text{rev}(P)$ (see Corollary~\ref{cor:palette_removal} below).
	We then complete the proof using a stability argument, showing that any extremal palette sufficiently close to a blow-up of~$P$ must actually be a blow-up of~$P$ (see Lemma~\ref{lem:stability}). 
	Now let us proceed with the details.
	
	We remind the reader that given two palettes~$P$ and~$Q$ on~$n$ colors, the edit distance~$|P \triangle Q|=\vert P\setminus Q\vert+\vert Q\setminus P\vert$ is the minimum number of patterns that must be deleted or added to transform~$P$ into~$Q$.
    Moreover, we say that~$P$ is~$\alpha$-close to~$Q$ if~$|P \triangle Q| \leq \alpha n^3$. 
	
	The following is a corollary of Lemmata~\ref{lem:palette_removal} and~\ref{lem:distinguish_palette}. 
	\begin{cor}\label{cor:palette_removal}
		Given a palette~$P$ and~$\alpha>0$, there exist~$M=M_{\ref{cor:palette_removal}},N=N_{\ref{cor:palette_removal}} \in \mathds N$ such that the following holds for every palette~$Q$ on~$n\geq N$ colors.
		If~$Q$ is~$\cF(P)_M$-deficient, then it is~$\alpha$-close to being contained in a blow-up of~$P$ or a blow-up of~$\rev(P)$. 
	\end{cor}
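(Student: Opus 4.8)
The plan is to deduce this directly by feeding the output of the Palette Removal Lemma into the distinguishing lemma. First I would invoke Lemma~\ref{lem:palette_removal} with the (possibly infinite) family $\cF(P)$ and the given $\alpha>0$, which produces constants $M_{\ref{lem:palette_removal}},N_{\ref{lem:palette_removal}} \in \mathds N$ and $\beta_{\ref{lem:palette_removal}} > 0$; I would simply output $M_{\ref{cor:palette_removal}} := M_{\ref{lem:palette_removal}}$ and $N_{\ref{cor:palette_removal}} := N_{\ref{lem:palette_removal}}$.

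Next, given a palette $Q$ on $n \ge N_{\ref{cor:palette_removal}}$ colors that is $\cF(P)_M$-deficient, the key observation is that $Q$ paints \emph{no} $3$-graph $F \in \cF(P)$ with $v(F) \le M$, so in particular it paints every such $F$ in $0 < \beta_{\ref{lem:palette_removal}} n^{|\partial F|}$ ways. Thus the hypothesis of Lemma~\ref{lem:palette_removal} is satisfied vacuously, and applying it to $Q$ yields an $\cF(P)$-deficient subpalette $Q' \subseteq Q$ on the same color set with $|Q \setminus Q'| \le \alpha n^3$. Note that the removal lemma hands us \emph{full} $\cF(P)$-deficiency of $Q'$, not merely deficiency with respect to small $F$, which is exactly what the next step needs.

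Finally I would run Lemma~\ref{lem:distinguish_palette} in contrapositive form on the pair $(P,Q')$. Since $Q'$ is $\cF(P)$-deficient it paints no $3$-graph $F$ for which $P$ is $F$-deficient, and hence by Lemma~\ref{lem:distinguish_palette} the palette $Q'$ must be contained in a blow-up of $P$ or in a blow-up of $\rev(P)$. Since $Q' \subseteq Q$, we get $|Q \triangle Q'| = |Q \setminus Q'| \le \alpha n^3$, so $Q$ is $\alpha$-close to $Q'$, a palette contained in a blow-up of $P$ or of $\rev(P)$ --- precisely the claimed conclusion.

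I do not expect a genuine obstacle here: all the substance has already been packaged into Lemmata~\ref{lem:palette_removal} and~\ref{lem:distinguish_palette}. The only points needing a word of care are that ``$\cF(P)_M$-deficient'' makes the painting-count hypothesis of the removal lemma hold trivially (a palette that cannot paint $F$ at all paints it in fewer than $\beta n^{|\partial F|}$ ways), and that the one-sided containment $Q' \subseteq Q$ already delivers the required closeness without any need to control $|Q' \setminus Q|$.
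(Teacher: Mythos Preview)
Your proposal is correct and follows essentially the same approach as the paper: apply Lemma~\ref{lem:palette_removal} to $\cF(P)$ and $\alpha$, observe that $\cF(P)_M$-deficiency makes the counting hypothesis hold vacuously, obtain an $\cF(P)$-deficient $Q'\subseteq Q$ with $|Q\setminus Q'|\le\alpha n^3$, and then use Lemma~\ref{lem:distinguish_palette} (in contrapositive) to conclude that $Q'$ lies in a blow-up of $P$ or $\rev(P)$. Your write-up is in fact a bit more explicit than the paper's about why the removal hypothesis is met and why one-sided containment suffices for $\alpha$-closeness.
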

	
	\begin{proof}
		Given~$\alpha>0$, apply Lemma~\ref{lem:palette_removal} to the family~$\cF(P)$ to obtain~$M, N\in \mathds N$ and~$\beta>0$.
		The conclusion of Lemma~\ref{lem:palette_removal} entails that for every~$\cF(P)_M$-deficient palette~$Q$ with~$c(Q)=n\geq N$, there exists an~$\alpha$-close palette~$Q'$ that does not paint~$\cF(P)$.
		Suppose for the sake of contradiction that~$Q'$ is not contained in a blow-up of~$P$ or~$\rev(P)$. 
		Then Lemma~\ref{lem:distinguish_palette} yields a~$3$-graph~$F$ painted by~$Q'$ but not by~$P$. 
		In particular,~$F\in \cF(P)$, a contradiction. 
	\end{proof}
	
	The next lemma is the key technical result of this section.
	We prove it using a stability argument similar to the one in~\cite{P:12}.
	
	\begin{lemma}\label{lem:stability}
		Let $P$ be a reduced palette with~$C(P)=[t]$. 
        There are~$N:=N_{\ref{lem:stability}}(P)\in\mathds{N}$, $M := M_{\ref{lem:stability}}(P)\in\mathds{N}$ and~$\alpha := \alpha_{\ref{lem:stability}}(P)>0$ such that for all integers~$n\geq N$ and~$m\geq M$, the following holds. 
        If~$Q \in \EXpal(n,\cF(P)_m)$ and~$Q$ is~$\frac{\alpha}{2}$-close to a blow-up of~$P$, then~$Q$ is a blow-up of~$P$.
	\end{lemma}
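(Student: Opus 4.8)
The plan is to use extremality to reduce the statement to the assertion that $Q$ is \emph{contained in} a blow-up of $P$, and then to prove that assertion by producing, whenever it fails, a $3$-graph $F\in\cF(P)_m$ that $Q$ paints. For the reduction: every blow-up of $P$ on $n$ colours is $\cF(P)$-deficient by Fact~\ref{fact:hom_invariant}, hence $\cF(P)_m$-deficient, so if $Q$ is contained in a blow-up $S$ of $P$ on $n$ colours then $e(S)\leq\expal(n,\cF(P)_m)=e(Q)\leq e(S)$ and $Q=S$, a blow-up of $P$. For the parameters I fix the rigid palette $R$ of Lemma~\ref{lem:rigidity}, say with $M_{\ref{lem:rigidity}}$ colours and classes $C(R)=\bigcup_{i=1}^t U_i$, assume (as we may, by Lemma~\ref{lem:domination}, else the lemma is immediate) that $P$ has no pattern $(c,c,c)$, and let $M=M_{\ref{lem:stability}}$ be larger than $v(F)$ for each of the finitely many $3$-graphs $F$ obtained from Lemma~\ref{lem:distinguish_palette} applied to pairs $(P,T)$, as $T$ ranges over palettes on at most $M_{\ref{lem:rigidity}}+1$ colours not contained in a blow-up of $P$ nor of $\rev(P)$; finally $\alpha$ is small in terms of $\beta_{\ref{prop:closetomax}}$ and $M$, and $N$ is large.

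First I would fix a blow-up $P'$ of $P$ on $n$ colours nearest to $Q$, with partition $C(Q)=V_1\cup\dots\cup V_t$, so $|Q\triangle P'|\leq\tfrac\alpha2 n^3$; since all blow-ups of $P$ on $n$ colours are $\cF(P)_m$-deficient, $e(Q)$ is at least their maximum, which by \eqref{eq:lambda_blowup} is $(\Lambda_P-o(1))n^3$, so $d(Q)\geq\Lambda_P-o(1)$ and a fortiori $d(Q\cap P')\geq\Lambda_P-\epsilon_{\ref{prop:closetomax}}$; Proposition~\ref{prop:closetomax} applied to $Q\cap P'\subseteq P'$ then gives $|V_i|\geq\beta n$ for all $i$, with $\beta=\beta_{\ref{prop:closetomax}}$. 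Call a colour \emph{atypical} if it lies in at least $\sqrt\alpha\,n^2$ patterns of $Q\triangle P'$; there are at most $O(\sqrt\alpha)\,n$ of these, so each $V_i$ has at least $\tfrac\beta2 n$ typical colours. A routine counting argument (most partition-respecting $M_{\ref{lem:rigidity}}$-subsets of the typical colours meet no pattern of $Q\triangle P'$) shows that $Q$ contains a copy $R^{\ast}$ of $R$ with each class $U_i$ inside $V_i$; in particular $R\subseteq Q$, so by the ``moreover'' clause of Lemma~\ref{lem:rigidity} (and trivially if $P\cong\rev(P)$) $Q$ is not contained in a blow-up of $\rev(P)$. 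Hence it suffices to rule out that $Q$ is not contained in a blow-up of $P$.

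For the core argument I would define, for every colour $v$, a target class $l(v)$: the class of $v$ if $v$ is typical, and for atypical $v$ a class with which $v$ is \emph{consistent}, meaning every pattern of $Q$ through $v$ has its other two colours' classes compatible with $v$ playing $l(v)$; if no such class existed, one produces (by placing the conflicting colours inside a copy $R^{\ast}$ as above) a subpalette of $Q$ on at most $M_{\ref{lem:rigidity}}+1$ colours that, by rigidity of $R^{\ast}$ and no-domination (Lemma~\ref{lem:domination}), is contained in no blow-up of $P$ or of $\rev(P)$, so Lemma~\ref{lem:distinguish_palette} yields a small $F\in\cF(P)$ painted by $Q$, contradicting $\cF(P)_m$-deficiency. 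It then remains to check that $v\mapsto l(v)$ is a homomorphism $Q\to P$, i.e.\ that every pattern $(x,y,z)\in Q$ has $(l(x),l(y),l(z))\in P$. Here I would invoke the extremal structure: replacing a colour by a clone of another colour never creates a new painting, hence preserves $\cF(P)$-deficiency, so extremality of $Q$ forces all colour-degrees to agree up to $O(n)$; combined with $d(Q)\geq\Lambda_P-o(1)$ and a short estimate on the gradient of $\lambda_P$ near its (interior) maximum on the simplex, this shows each atypical colour $v$ misses only $o(n^2)$ patterns relative to an ideal class-$l(v)$ colour. Consequently one may choose a partition-respecting copy in $Q$ of a blow-up of $R$ on $M_{\ref{lem:rigidity}}+k$ colours ($k\leq 3$ the number of atypical colours among $x,y,z$) which contains $x,y,z$ in classes $l(x),l(y),l(z)$ and whose induced subpalette in $Q$ is an \emph{exact} blow-up of $P$ (the spoiled choices being a vanishing fraction); since that blow-up contains $(x,y,z)$, its indices lie in $P$, as wanted — unless a spoiled configuration is unavoidable, which happens precisely when $(x,y,z)$ is a ``wrong'' pattern, in which case a suitable subpalette on $O(1)$ colours is contained in no blow-up of $P$ or $\rev(P)$ and Lemma~\ref{lem:distinguish_palette} again gives a contradiction. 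Thus $Q$ is contained in a blow-up of $P$, hence equals one.

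The step I expect to be the main obstacle is exactly making this work with \emph{bounded}-size witnesses: Lemma~\ref{lem:distinguish_palette} by itself only produces distinguishing $3$-graphs of uncontrolled size, and it is the rigid palette together with the extremal degree and Lagrangian estimates that confines the relevant obstructions to palettes on $O(1)$ colours, so that $\cF(P)_m$ really contains them once $m\geq M$. In particular, pinning down the small set of atypical colours — showing each is consistent with a unique class and controlling patterns that simultaneously involve several of them — is the delicate part, where the Ramsey-theoretic input of Section~\ref{sec:ramsey} and the stability input from Section~\ref{sec:properties} must be combined carefully; by comparison the special case $P\cong\rev(P)$, in which blow-ups of $P$ and of $\rev(P)$ coincide, is routine.
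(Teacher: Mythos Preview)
Your proposal has the right ingredients --- the rigid palette from Lemma~\ref{lem:rigidity}, the finite family of forbidden configurations obtained by applying Lemma~\ref{lem:distinguish_palette} to palettes on $O_P(1)$ colours, Proposition~\ref{prop:closetomax} to get $|V_i|\ge\beta n$, and the extremality of $Q$ --- and these are exactly the tools the paper uses. But the way you assemble them has a real gap.

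The paper does not try to build a homomorphism $l\colon C(Q)\to[t]$. It works directly with $A=S\setminus Q$ (missing patterns) and $B=Q\setminus S$ (bad patterns), where $S$ is chosen to \emph{minimise $|B|$} among all blow-ups of $P$ on $n$ colours --- not to minimise $|Q\triangle S|$, as you do. This minimality is the key mechanism you are missing: if some colour $x$ had $\deg_B(x)>\gamma n^2$, then moving $x$ to any other class $k$ cannot decrease $|B|$, so for \emph{every} $k\in[t]$ there are $\Omega(n^2)$ bad-for-class-$k$ patterns through $x$. With many witnesses one can, for each $k$, glue a witness onto a partition-respecting copy of the rigid $R_\star$ and still have enough freedom to count embeddings of the resulting palette on $M_{\ref{lem:rigidity}}+1$ colours into $Q\cup S$; each such embedding must hit $A$ (since the glued palette lies in $\cR$ and $Q$ is $\cR$-free), and the count forces $|A|>\alpha n^3$, a contradiction. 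This yields $\Delta(B)\le\gamma n^2$. A second construction (attach a single bad pattern $q_B$ to a blow-up of $R_\star$, using property~(iii) of Lemma~\ref{lem:rigidity}) shows that every $q_B\in B$ forces $\Omega(n^2)$ patterns $q_A\in A$ with $|C(q_A)\cap C(q_B)|=1$. Double-counting such pairs $(q_A,q_B)$ --- upper bound $3\Delta(B)|A|$ from the first claim, lower bound $\Omega(n^2)|B|$ from the second --- gives $|A|>|B|$, contradicting $|A|\le|B|$. Hence $|B|=0$ and $Q=S$.

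Your route breaks at ``for atypical $v$, pick a consistent class $l(v)$''. Without the minimise-$|B|$ step you have only \emph{one} witness pattern per blocked class, and nothing prevents its other endpoints from being atypical, so you cannot place them inside a typical copy of $R_\star$ and hence cannot embed the whole obstruction in $Q$. The Zykov symmetrisation is valid --- all degrees in $Q$ do agree up to $O(n)$ --- but it does not imply what you claim: equal degree is perfectly compatible with a colour's link being split across several classes, so it neither determines $l(v)$ nor shows that an atypical $v$ ``misses only $o(n^2)$ patterns relative to an ideal class-$l(v)$ colour''. And the last step, where an unavoidable spoiled configuration is said to occur ``precisely when $(x,y,z)$ is a wrong pattern'', is what you are trying to prove. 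The paper's $A$/$B$ double-count sidesteps all of this by never assigning classes to individual colours.
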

	
	\begin{proof}
		Let $M_1=M_{\ref{lem:rigidity}}$ be the integer obtained from Lemma \ref{lem:rigidity}, and $\epsilon=\epsilon_{\ref{prop:closetomax}}, \beta = \beta_{\ref{prop:closetomax}}>0$ be the constants given by Proposition \ref{prop:closetomax}.
        Set~$\alpha$ and~$\gamma$ as real numbers such that
		\begin{align}\label{eq:alphagamma}
			\alpha=\min\{(\beta/4)^{2M_1^2}, \epsilon/3\} \quad \text{and} \quad \gamma=(\beta/4)^{2M_1}\,.
		\end{align}
        Consider the family of palettes $\cR$ given by
		\begin{align*}
			\cR := \{R :\: &c(R) \leq M_1 + 3 \text{ and }\\ &R \text{ is neither contained in a blow-up of } P \text{ nor in a blow-up of } \rev(P) \}\,.
		\end{align*}
		For every~$R \in \cR$, by Lemma \ref{lem:distinguish_palette}, there exists a~$3$-graph~$F_R$ such that~$R$ paints~$F_R$ and~$P$ does not paint~$F_R$ (i.e.,~$F_R \in \cF(P)$).
        Let~$M_2 := \max_{R \in \cR} \{v(F_R)\}$.
        Such an~$M_2$ always exists since~$\cR$ is a finite family.
        We will show that the statement of the lemma holds for~$M_2$ as~$M$ and~$N\in\mathds{N}$ large enough.
        Fix integers~$n\geq N$ and~$m\geq M_2$.
        An immediate consequence of our choice is the following:
		\begin{align}\label{eq:transfer}
			\text{If } Q \text{ does not paint } \cF_{m}(P), \text{ then } Q \text{ is } \cR\text{-free.}
		\end{align}

		Given~$n\in\mathds{N}$, let~$S$ be a blow-up of~$P$ with partition structure~$[n] = \bigcup_{i=1}^t V_i$, i.e.,
		\begin{align*}
			S = \{(x,y,z) \in V_i \times V_j \times V_k : (i,j,k) \in P\}\,.
		\end{align*}
		Given any such~$S$ we can define the set of \emph{missing patterns}~$A$ and the set of \emph{bad patterns}~$B$ of~$Q$ (with respect to~$S$) by
		\begin{align}\label{eq:missingbad}
			A := S \setminus Q \quad \text{and} \quad B := Q \setminus S\,,
		\end{align}
		i.e., the patterns from~$S$ missing in~$Q$ and the patterns in~$Q$ that are not in our target blow-up~$S$.
        Let~$Q \in \EXpal(n,\cF_{M_2}(P))$ be a maximum palette that does not paint~$\cF_{M_2}(P)$ and is~$\frac{\alpha}{2}$-close to a blow-up~$S \subseteq [n]^3$ of~$P$.
        Since $|Q \triangle S| \leq \frac{\alpha}{2} n^3$, we have~$|A|+|B| \leq \frac{\alpha}{2} n^3$.
        Moreover, because~$S$ is a blow-up of~$P$, it does not paint~$\cF_M(P)$.
        Therefore, by the maximality of~$Q$, we have that~$|A| \leq |B| \leq \frac{\alpha}{2} n^3$.
        
        It will be useful later to compare~$P$ not to~$S$ but instead to some blow-up~$S'$ of~$P$ with~$n$ colors that minimizes~$|B|$ among all blow-ups of~$P$ with~$n$ colors.
        Fortunately, we can still check that such~$S'$ is not too far from~$Q$.
        Indeed, if there exists a blow-up~$S'$ with partition structure~$[n] = \bigcup_{i=1}^t V_i'$ such that the missing patterns~$B' = Q \setminus S'$ of~$Q$ with respect to~$S'$ satisfy $|B'| < |B|$, then by the maximality of~$Q$, we have~$|Q \triangle S'| = |A'| + |B'| < 2|B| \leq \alpha n^3$. 
        Hence, at the marginal cost of a slightly larger edit distance, we can replace $S$ by $S'$ and assume that the partition $[n] = \bigcup_{i=1}^t V_i$ minimizes the number of bad patterns.
        To ease the notation, we write~$S$ and~$A$ and~$B$ instead of~$S'$,~$A'$, and~$B'$.
        In particular, we now have~$|Q \triangle S| = |A| + |B| \leq \alpha n^3$ and
        \begin{align}\label{eq:missingbad2}
			|A| \leq |B| \leq \alpha n^3\,.
		\end{align}
		
		Finally, note that by the maximality of $Q$, there exists some~$N$ so that we have $d(Q)\geq \Lambda_P-\alpha$ whenever~$c(Q)=n \geq N$ (since such density can be achieved by a maximal blow-up of $P$ on $n$ colors). 
        This implies by \eqref{eq:alphagamma} that $d(S)\geq \Lambda_P-3\alpha \geq \Lambda_P-\epsilon$.
        Hence, by Proposition \ref{prop:closetomax}, we have
		\begin{align}\label{eq:sizeofVi}
			|V_i|\geq \beta n
		\end{align}
		for $i \in [t]$.
		
		We claim that~$|A| = |B| = 0$.
        Suppose, to the contrary, that~$|B|>0$.
        Given a color~$x \in [n]$, let~$\deg_B(x)$ be the number of patterns in~$B$ containing~$x$ (note that we do not count multiplicities, i.e., the pattern~$(x,x,y)$ counts only once).
        We define the \emph{maximum degree~$\Delta(B)$} as the maximum of~$\deg_B(x)$ over~$x \in [n]$.
        We will first show that the palette of bad patterns~$B$ does not have a large maximum degree.
		
		\begin{claim}\label{clm:maxB}
			$\Delta(B)\leq \gamma n^2$.
		\end{claim}
		\begin{proof}
			Suppose that $\Delta(B)>\gamma n^2$ and let~$x\in [n]$ such that $\deg_B(x)>\gamma n^2$.
            Suppose, without loss of generality, that~$x\in V_1$.
            For every~$k \in [t]$, we define a set~$B_{x}^{(k)}$ as follows.
            Let~$B^{(1)}:=B$.
            For~$k\neq 1$, consider the partition~$[n]=\bigcup_{i=1}^t V_i^{(k)}$ with~$V_i^{(k)}=V_i$ for~$i\notin\{1,k\}$, and~$V_1^{(k)}=V_1\setminus \{x\}$ as well as $V_k^{(k)}=V_k\cup \{x\}$.
            That is,~$[n]=\bigcup_{i=1}^t V_i^{(k)}$ is the partition obtained by moving the color~$x$ from~$V_1$ to~$V_k$.
            Let~$S^{(k)}$ be the blow-up of~$P$ with this partition structure, and let~$B^{(k)}=Q\setminus S^{(k)}$ be the new set of bad patterns of~$Q$ with respect to~$S^{(k)}$.
            Now we define 
			\begin{align*}
				B_{x}^{(k)}:=\{q\in B^{(k)}:\:x\in q\}    
			\end{align*}
			for~$k \in [t]$.
            Recall that we chose~$S$ to minimize the number~$|B|=|Q \setminus S|$, so we have that $|B|\leq |B^{(k)}|$ for $k \in [t]$. 
            Since $|B^{(k)}|-|B|=|B_{x}^{(k)}|-|B_{x}^{(1)}|$, we obtain that 
			\begin{align}\label{eq:Bix}
				|B_{x}^{(k)}|\geq |B_{x}^{(1)}|=\deg_B(x)>\gamma n^2\,.
			\end{align}
			for $k \in [t]$. 
			
			Let~$R_{\star}$ be the non-degenerate rigid palette on~$M_1$ colors obtained by Lemma~\ref{lem:rigidity} with partition~$[M_1]=\bigcup_{i=1}^t U_i$.
            For each~$t$-tuple $\bX=(p_1,\ldots,p_t)\in \prod_{k=1}^t B_x^{(k)}$ of patterns, we define a palette $R_{\star}^{\bX}$ on~$M_1+1$ colors as follows.
            Let~$X=\{p_1,\ldots,p_t\}$ be the palette containing the~$t$ elements of~$\bX$.
            Setting~$W_i=V_i\cap (C(X)\setminus\{x\})$, we have the partitioning~$C(X)=\left(\bigcup_{i=1}^t W_i\right)\cup \{x\}$.
            Consider an injective map~$\iota: C(X)\rightarrow [M_1+1]$ such that~$\iota(W_i)\subseteq U_i$ and~$\iota(x)=M_1+1$.
            Such an embedding is always possible since~$|U_i|\geq 3t$ for~$i \in [t]$ (Property (ii) of Lemma \ref{lem:rigidity}) and~$|W_i| \leq 3t$. 
            Let~$\iota(X)$ be the palette with patterns given by~$\iota(p)$ for every~$p \in X$.
            The palette~$R_{\star}^{\bX}$ is defined as the union~$R_{\star}^{\bX}=R_{\star}\cup \iota(X)$.
			
			\begin{subclaim}\label{sub:notincR}
				$R_{\star}^{\bX} \in \cR$.
			\end{subclaim}
			\begin{proof}
				Clearly,~$c(R_\star^\bX) \leq M_1+3$. 
                We claim that~$R_\star^\bX$ is neither contained in a blow-up of~$P$ nor in a blow-up of~$\rev(P)$.
                First, note that it suffices to check the claim only for~$P$.
                Indeed, by Lemma~\ref{lem:rigidity}, if~$P \not\cong \rev(P)$, then the palette~$R_{\star}$ is not contained in a blow-up of~$\rev(P)$.
                Since~$R_\star \subseteq R_\star^\bX$, this in particular implies that~$R_{\star}^\bX$ is not contained in a blow-up of~$\rev(P)$.
				
				Now suppose for the sake of contradiction that $R_{\star}^\bX$ is contained in a blow-up $S'$ of $P$ with partition structure $\bigcup_{i=1}^t V_i'$.
                In particular,~$R_\star \subseteq S'$, and by the definition of rigidity, there exists some automorphism~$h: [t] \to [t]$ of~$P$ with~$U_i \subseteq V_{h(i)}'$ for all~$i \in [t]$.
                We may assume without loss of generality that $h(i) = i$, i.e., that $U_i \subseteq V_i'$ for $i \in [t]$.
                Suppose that the last color~$M_1+1$ of~$R_\star^\bX$ is contained in~$V_k'$ for some index~$k \in [t]$. 
                Let~$p_k$ be the~$k$-th pattern of~$X$.
                From the fact that~$\iota(x) = M_1+1 \in V_k'$ and~$U_i \subseteq V_i'$, we obtain that the patterns~$p_k$ and~$\iota(p_k)$ respect the same underlying structure in the blow-ups~$S^{(k)}$ and~$S'$ of~$P$, respectively.
                That is,~$p_k \in V_a^{(k)} \times V_b^{(k)} \times V_c^{(k)}$ if and only if~$\iota(p_k) \in V_a' \times V_b' \times V_c'$. 
                Since we assumed that~$R_{\star}^\bX\subseteq S'$, we have that in particular~$\iota(p_k)\in S'$.
                This implies that~$p_k \in S^{(k)}$, which is a contradiction since~$p_k \in B_x^{(k)} = Q \setminus S^{(k)}$. 
                Therefore,~$R_{\star}^\bX$ is not contained in a blow-up of~$P$.
			\end{proof}
			
			Our goal now is to lower bound the cardinality of the set~$A = S \setminus Q$ to obtain a contradiction.
            We say that an (injective) embedding~$f: R_{\star}^\bX \rightarrow Q\cup S$ is \emph{good} if~$f(M_1+1) = x$ and~$f(\iota(p_k)) = p_k$ for~$k \in [t]$.
            That is, if~$f$ embeds~$\iota(X)$ into~$X$.
            Fix a pair~$(f,\bX)$ where~$f$ is a good embedding.
            By Subclaim~\ref{sub:notincR} and (\ref{eq:transfer}), we obtain that~$f(R_\star^\bX) \not\subseteq Q$.
            Therefore,~$f(R_\star^\bX) \cap A \neq \emptyset$, i.e., there exists a pattern in~$f(R_\star^\bX)$ that is a missing pattern.
            Let~$\xi(f,\bX)$ be such a pattern. 
            Since~$X \subseteq Q$,~$R_{\star}^{\bX} = R_{\star}\cup \iota(X)$, and~$f$ embeds~$\iota(X)$ into~$X$ we must have that~$\xi(f,\bX) \in f(R_{\star})$ and so in particular~$x\notin\xi(f,\bX)$.
            By property (i) of Lemma \ref{lem:rigidity}, this implies that $\xi(f,\bX)$ is a non-degenerate pattern (i.e., it has three distinct colors).
            For this reason, we will only estimate the number of non-degenerate missing patterns not containing~$x$.
			
			For $i \in [t]$, let $c_i(\bX)$ be the number of distinct colors of $U_i$ present in the patterns of $\iota(X)$, and let $c(\bX) = \sum_{i=1}^{t} c_i(\bX)$.
            Note that~$c(\bX)\leq2t$ since each of the~$t$ patterns in~$X$ contains~$x$.
            For a fixed~$\bX\in \prod_{k=1}^t B_x^{(k)}$, the number of good embeddings~$f$ is at least the number of ways to select, for each~$i \in [t]$,~$\left(|U_i|-c_i(\bX)\right)$ vertices from~$V_i \setminus \{X \cap V_i \}$.
            Therefore, for~$N$ large enough, the number of distinct pairs $(f,\bX)$ can be lower bounded by
			\begin{align}\label{eq:numberoffx}
				\sum_{\bX\in \prod_{k=1}^t B_x^{(k)}} \prod_{i=1}^t \left(\frac{|V_i|}{2}\right)^{|U_i|-c_i(\bX)}
				&\geq \sum_{\bX\in \prod_{k=1}^t B_x^{(k)}}\left(\frac{\beta n}{2}\right)^{M_1-c(\bX)} \nonumber\\
				&\geq  \left(\frac{\beta n}{2}\right)^{M_1-2t}\prod_{k=1}^t|B_x^{(k)}|\geq \gamma^t n^{M_1} \left(\frac{\beta}{2}\right)^{M_1-2t}\,, 
			\end{align}
			where we use (\ref{eq:sizeofVi}) and (\ref{eq:Bix}).
            Moreover, for a fixed non-degenerate missing pattern~$p$ not containing~$x$, the number of pairs~$(f,\bX)$ such that~$p \in f(R_\star^\bX)$ is at most~$n^{M_1-3}$ (since the pattern~$p$ and the vertex~$x$ are fixed).
            Therefore, the number of missing patterns can be estimated by
			\begin{align*}
				|A| \geq \gamma^t n^3\left(\frac{\beta}{2}\right)^{M_1-2t} > \alpha n^3
			\end{align*}
			by our choice of $\alpha$ and $\gamma$ in (\ref{eq:alphagamma}). However, this contradicts (\ref{eq:missingbad2}), which concludes the proof.
		\end{proof}
		
		By applying a similar argument as in the last claim, one can obtain the following.
		
		\begin{claim}\label{clm:missingcount}
			For every bad pattern $q_B \in B$, there exist at least $(\beta/4)^{M_1} n^2$ non-degenerate missing patterns $q_A \in A$ such that $|C(q_A) \cap C(q_B)| = 1$.
		\end{claim}
		
		\begin{proof}
			Fix~$q_B = (a,b,c) \in B$.
            Let~$\chi: C(q_B) \to [t]$ be the~$t$-coloring such that~$q_B \in V_{\chi(a)} \times V_{\chi(b)} \times V_{\chi(c)}$.
            Let~$R_{\star}$ be the non-degenerate rigid palette on~$M_1$ colors obtained by Lemma~\ref{lem:rigidity} with partition~$[M_1] = \bigcup_{i=1}^t U_i$. 
            We define the palette~$R_\star^{q_B} \subseteq [M_1 + c(q_B)]^3$ as follows.
            Let~$C(q_B) = \{x_1, \ldots, x_{c(q_B)}\}$ (note that~$c(q_B)$ can be either~$2$ or~$3$, and that~$\{a, b, c\} = \{x_1, \ldots, x_{c(q_B)}\}$ as sets). 
            For~$1 \leq k \leq c(q_B)$, let~$R_k$ be a palette with color set~$[M_1] \cup \{M_1+k\}$ obtained from~$R_{\star}$ by blowing up a color in the set $U_{\chi(x_k)}$ to form the new vertex~$M_1+k$.
            Let~$p_B \in \{M_1+1, \ldots, M_1+c(q_B)\}^3$ be the pattern obtained by sending each color~$x_k$ in the pattern~$q_B$ to the color~$M_1+k$.
            Then we define~$R_\star^{q_B} := \left( \bigcup_{k=1}^{c(q_B)} R_k \right) \cup \{p_B\}$ and remark that in this construction the only pattern in~$R_\star^{q_B}$ which meets~$\{M+1,\dots,M+c(q_B)\}$ in more than one color is~$p_B$.
			
			\begin{subclaim}\label{sub:notincR2}
				$R_\star^{q_B} \in \cR$.
			\end{subclaim}
			\begin{proof}
				As in the proof of Subclaim \ref{sub:notincR}, it suffices to check that~$R_\star^{q_B}$ is not contained in a blow-up of~$P$. 
                Suppose, for the sake of contradiction, that~$R_\star^{q_B}$ is contained in a blow-up~$S'$ of~$P$ with partition structure~$\bigcup_{i=1}^t V_i'$.
                This in particular implies that~$R_\star \subseteq S'$ and~$R_k \subseteq S'$ for each~$1 \leq k \leq c(q_B)$.
                By the rigidity of~$R_\star$, we may assume without loss of generality that~$U_i \subseteq V_i'$ for $i \in [t]$.
                Moreover, by property (iii) of Lemma~\ref{lem:rigidity}, we also obtain that~$M_1+k \in V'_{\chi(x_k)}$.
                Thus, and since~$q_B \in V_{\chi(a)} \times V_{\chi(b)} \times V_{\chi(c)}$, we obtain that~$p_B \in V'_{\chi(a)} \times V'_{\chi(b)} \times V'_{\chi(c)}$.
                This implies that~$(\chi(a), \chi(b), \chi(c)) \in P$, which contradicts the fact that~$q_B \in B$ is a bad pattern. 
			\end{proof}
			
			Our goal now is to lower bound the cardinality of the missing patterns $A = S \setminus Q$. We define the subsets $A_0, A_1 \subseteq A$ as
			\begin{align*}
				A_0 & := \{q_A \in A : c(q_A) = 3,\, C(q_A) \cap C(q_B) = \emptyset\}\,, \\
				A_1 & := \{q_A \in A : c(q_A) = 3,\, |C(q_A) \cap C(q_B)| = 1\}\,.
			\end{align*}
			That is,~$A_0$ consists of the non-degenerate missing patterns that are disjoint from~$q_B$, and~$A_1$ consists of those that intersect~$q_B$ in exactly one color.
            We say that an embedding $f : R_\star^{q_B} \to Q \cup S$ is \emph{good} if~$f$ sends~$p_B$ to~$q_B$. 
            Let~$\Psi$ be the set of all good embeddings~$f$.
            Fix~$f \in \Psi$.
            By Subclaim~\ref{sub:notincR2} and (\ref{eq:transfer}), we obtain that~$f(R_\star^{q_B}) \not\subseteq Q$.
            Therefore,~$f(R_\star^{q_B})$ must contain a missing pattern~$\xi(f) \in A$. 
            Since~$q_B \in B$, we obtain that~$\xi(f) \in f(R_\star^{q_B} \setminus \{p_B\})$. 
            By recalling that the only pattern in~$R_\star^{q_B}$ which meets~$C(p_B)$ in more than one color is~$p_B$, this implies that $\xi(f) \in A_0 \cup A_1$. 
			Using (i) and (iii) of Lemma \ref{lem:rigidity}, the number of good embeddings $f$ can be lower bounded by
			\begin{align*}
				|\Psi| & \geq \prod_{i=1}^t \left(\frac{|V_i|}{2}\right)^{|U_i|} \geq \left(\frac{\beta n}{2}\right)^{M_1}\,,
			\end{align*}
			where we use (\ref{eq:sizeofVi}). We can split the argument into two cases:
			
			\vspace{0.2cm}
			\noindent \underline{Case 1:} For at least $|\Psi| / 2$ embeddings $f$, the pattern $\xi(f)$ is in $A_0$.
			
			\vspace{0.2cm}
			
			For a pattern $q_A \in A_0$, the number of $f \in \Psi$ such that $q_A \in f(R_\star^{q_B})$ is at most $n^{M_1-3}$. Therefore,
			\begin{align*}
				|A| & \geq |A_0| \geq \frac{|\Psi|}{2 n^{M_1-3}} \geq \frac{n^3}{2} \left(\frac{\beta}{2}\right)^{M_1} > \alpha n^3,
			\end{align*}
			by our choice of $\alpha$ in (\ref{eq:alphagamma}). However, this contradicts (\ref{eq:missingbad2}).
			
			\vspace{0.2cm}
			
			\noindent \underline{Case 2:} For at least $|\Psi| / 2$ embeddings $f$, the pattern $\xi(f)$ is in $A_1$.
			
			\vspace{0.2cm}
			
			For a pattern $q_A \in A_1$, the number of $f \in \Psi$ such that $q_A \in f(R_\star^{q_B})$ is at most $n^{M_1-2}$ (since $|C(q_A) \cap C(q_B)| = 1$ and $c(q_A) = 3$).
            Therefore,
			\begin{align*}
				|A_1| & \geq \frac{|\Psi|}{2 n^{M_1-2}} \geq \frac{n^2}{2} \left(\frac{\beta}{2}\right)^{M_1} > \left(\frac{\beta}{4}\right)^{M_1} n^2\,.
			\end{align*}
			This concludes the proof of the claim.
		\end{proof}
		
		To finish the proof, we double count the number of pairs~$(q_A, q_B)$ where~$q_B \in B$ is a bad pattern and~$q_A \in A$ is a non-degenerate missing pattern such that~$|C(q_A) \cap C(q_B)| = 1$.
        Fix~$q_B\in B$.
        By Claim~\ref{clm:missingcount}, there exist at least~$(\beta / 4)^{M_1} n^2$ patterns~$q_A$. 
        Therefore, there are at least~$(\beta / 4)^{M_1} n^2 |B|$ such pairs~$(q_A, q_B)$.
        On the other hand, for a fixed~$q_A\in A$, there are at most~$3\Delta(B)$ patterns~$q_B$ such that~$|C(q_A) \cap C(q_B)| = 1$. 
        Consequently, by Claim~\ref{clm:maxB}, the number of pairs~$(q_A, q_B)$ is at most~$3\gamma n^2 |A|$.
        Combining the two bounds yields, by (\ref{eq:alphagamma}), that
		\begin{align*}
			|A| \geq \frac{(\beta / 4)^{M_1}}{3\gamma} |B| > |B|,
		\end{align*}
		which contradicts (\ref{eq:missingbad2}). Therefore $|A|=|B|=0$ and consequently $Q=S$, concluding the proof of the lemma.
	\end{proof}
	
	We are now able to prove Theorem \ref{thm:expal_eq_blowup} as described in the outline at the beginning of the section.
	
	\begin{proof}[Proof of Theorem \ref{thm:expal_eq_blowup}]
	       We start by setting up the constants. Let $M^{P}_{\ref{lem:stability}}$, $N^{P}_{\ref{lem:stability}}$, $\alpha^{P}_{\ref{lem:stability}}$ and $M^{\rev(P)}_{\ref{lem:stability}}$, $N^{\rev(P)}_{\ref{lem:stability}}$, $\alpha^{\rev(P)}_{\ref{lem:stability}}$ be the constants obtained by applying Lemma \ref{lem:stability} to the palettes $P$ and $\rev(P)$, respectively (Lemma \ref{lem:stability} applies to~$\rev(P)$ because, as remarked in Section \ref{sec:properties}, it is reduced if and only if~$P$ is).
		Let $M_{\ref{cor:palette_removal}}$ and $N_{\ref{cor:palette_removal}}$ be the constants obtained by applying Corollary~\ref{cor:palette_removal} to the palette~$P$ with $\alpha=\frac{1}{2}\min\{\alpha^{P}_{\ref{lem:stability}},\alpha^{\rev(P)}_{\ref{lem:stability}}\}$.
        Set $N_0:=\max\{N^{P}_{\ref{lem:stability}},N^{\rev(P)}_{\ref{lem:stability}},N_{\ref{cor:palette_removal}}\}$.
        Consider the family of palettes~$\cR$ given by
		\begin{align*}
			\cR:=\{R:\: R &\text{ is a palette with $c(R)\leq N_0$}\\ &\text{ and }R\text{ is not contained in a blow-up of } P \text{ nor in a blow-up of } \rev(P)\}\,.\
		\end{align*}
		For every $R\in \cR$, by Lemma \ref{lem:distinguish_palette}, there exists a~$3$-graph~$F_R$ such that~$R$ paints~$F_R$ and~$P$ does not paint~$F_R$.
		Let~$M_4:=\max\limits_{R\in \cR}\{v(F_R)\}$ and note that this is well-defined since~$\cR$ is finite.
		Since~$\{F_R\}_{R\in\cR}\subseteq\cF(P)$ and by our choice of~$M_4$, every palette in~$\cR$ paints a~$3$-graph in~$\cF(P)_{M_4}$.
		In other words, we have that
		\begin{align}\label{eq:transfer2}
			\text{if } Q \text{ is } \cF_{M_4}(P)\text{-deficient, then } Q \text{ is } \cR\text{-free.}
		\end{align}
		Set $M:=\max\{M^{P}_{\ref{lem:stability}},M^{\rev(P)}_{\ref{lem:stability}},M_{\ref{cor:palette_removal}},M_4\}$. We claim that the family $\cH:=\cF(P)_M$ satisfies the hypothesis of the theorem. 
		
		Let~$n\in\mathds{N}$ and let~$Q\in \EXpal(n,\cH)$ be a palette with~$c(Q)=n$ that maximizes the number of patterns among all~$\cH$-deficient palettes.
		Suppose first that~$n\leq N_0$. By~\eqref{eq:transfer2}, the palette~$Q$ is~$\cR$-free, and then the definition of~$\cR$ and~$c(Q)\leq N_0$ entail that~$Q$ is contained in a blow-up of~$P$ or in a blow-up of~$\rev(P)$ and so~$Q$ is among the palettes which are considered in the right-hand side of~\eqref{eq:maintech}.
		Hence, we may assume that~$n\geq N_0$. 
		Now, Corollary~\ref{cor:palette_removal} implies that~$Q$ is~$\alpha$-close to being contained in a blow-up of~$P$ of $\rev(P)$. 
		Therefore, by Lemma~\ref{lem:stability}, $Q$ is a blow-up of $P$ or $\rev(P)$. 
	\end{proof}

	\section{Proof of main theorem}\label{sec:main}
	
	In this section we prove Theorem~\ref{thm:main}.
	As mentioned before, it essentially follows from Theorem~\ref{thm:expal_eq_blowup} and the fact that~$\pi_{\pal}(\cF)=\pi_{\vvv}(\cF)$ holds for finite families.
	This equality in turn follows from the work in~\cite{R:20} (which is implicit in~\cite{RRS:18}) and~\cite{L:24}.
	To expand on this argument, we recall the notion of reduced hypergraphs from~\cite{RRS:18}.
	Essentially, reduced hypergraphs capture the setting that one arrives at after applying hypergraph regularity.
	
	\begin{dfn}\label{dfn:reduced_hypergraph}
		A reduced~$3$-graph is a triple~$\ccA=(I,\{\cP^{ij}\}_{ij\in I^{(2)}},\{\ccA^{ijk}\}_{ijk\in I^{(3)}})$ consisting of a finite index set~$I$, a collection of pairwise disjoint sets of vertices~$\{\cP^{ij}\}_{ij\in I^{(2)}}$, and a collection of~$3$-partite~$3$-graphs~$\{\ccA^{ijk}\}_{ijk\in I^{(ijk)}}$ such that for every~$ijk\in I^{(3)}$ the vertex classes of~$\ccA^{ijk}$ are~$\cP^{ij}$,~$\cP^{ik}$, and~$\cP^{jk}$.
		If $d\in [0, 1]$ and $e(\ccA^{ijk})\ge d|\cP^{ij}||\cP^{ik}||\cP^{jk}|$ holds for all~$ijk\in I^{(3)}$, we say that $\ccA$ is {\it $(d, \vvv)$-dense}. 
	\end{dfn}
	
	To ease the notation, we often simply write a reduced~$3$-graph as~$\ccA=(I,\cP^{ij},\mathcal{A}^{ijk})$.
	By the hypergraph embedding lemma, in order to find a copy of a~$3$-graph~$F$ in the original host~$3$-graph~$H$, it is sufficient to find a ``reduced map'' of~$F$ to a suitable reduced~$3$-graph.
	This is made formal with the following definition.
	
	\begin{dfn}\label{dfn:reduced_map}
		A {\it reduced map} from a~$3$-graph $F$ to a reduced~$3$-graph~$\ccA=(I,\cP^{ij},\mathcal{A}^{ijk})$ is a pair 
		$(\lambda, \phi)$ such that 
		\begin{enumerate}[label=\rmlabel]
			\item\label{it:rm1} $\lambda\colon V(F)\longrightarrow I$  and  
			$\phi\colon \partial F\longrightarrow \bigcup_{ij\in I^{(2)}}\cP^{ij}$, where $\partial F$ denotes the 
			set of all pairs of vertices covered by an edge of $F$;
			\item\label{it:rm2} if $uv\in\partial F$, then $\lambda(u)\ne \lambda(v)$ and 
			$\phi(uv)\in\cP^{\lambda(u)\lambda(v)}$;
			\item\label{it:rm3} if $uvw\in E(F)$, then 
			$\phi(uv)\phi(uw)\phi(vw)
			\in E(\ccA^{\lambda(u)\lambda(v)\lambda(w)})$.
		\end{enumerate}
		
		If some such reduced map exists, we say that $\ccA$ {\it contains a reduced image 
			of $F$}, and otherwise $\ccA$ is called {\it $F$-free}. 
	\end{dfn}
	
	Given a family~$\cF$ of~$3$-graphs we say that a reduced~$3$-graph~$\ccA$ is~$\cF$-free if it is~$F$-free for all~$F\in\cF$.
	One can now define the Tur\'an density of a family of~$3$-graphs with respect to reduced~$3$-graphs.
	
	\begin{dfn}\label{dfn:pired}
		If~$\cF$ is a family of~$3$-graphs, then 
		\begin{align*} 
			\pired(\cF)  = \sup\bigl\{d\in [0, 1]\colon & \text{For every $m\in\NN$ } \text{there 
				is a $(d, \vvv)$-dense, } \notag \\
			& \text{$\cF$-free, reduced~$3$-graph with an index set of size $m$}\bigr\}\,. 
		\end{align*}
	\end{dfn}
	
	The key behind almost all the progress on the uniform Tur\'an problem in the past decade is that an argument based on the hypergraph regularity method yields the following result.
	
	\begin{theorem}[Theorem 3.3 in \cite{R:20}, implicit in~\cite{RRS:18}]\label{thm:reduced_to_uniform}
		If~$\cF$ is a finite family of~$3$-graphs, then $$\pired(\cF) = \pivvv(\cF).$$
	\end{theorem}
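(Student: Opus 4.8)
The plan is to prove the two inequalities $\pired(\cF)\le\pivvv(\cF)$ and $\pivvv(\cF)\le\pired(\cF)$ separately; only the latter will use the finiteness of $\cF$.

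For $\pired(\cF)\le\pivvv(\cF)$ the plan is a random construction. Fix $d<\pired(\cF)$ and $\eta>0$, choose $m$ large compared with $1/\eta$, and let $\ccA=(I,\cP^{ij},\ccA^{ijk})$ be a $(d,\vvv)$-dense, $\cF$-free reduced $3$-graph with $|I|=m$. I would build a $3$-graph $H$ on vertex classes $V_i$ ($i\in I$), each of size $N$, by choosing for every pair $\{x,y\}$ with $x\in V_i$, $y\in V_j$, $i\ne j$, an element $p_{xy}\in\cP^{ij}$ independently and uniformly at random, and declaring $\{x,y,z\}$ with $x\in V_i$, $y\in V_j$, $z\in V_k$ and $i,j,k$ pairwise distinct to be an edge precisely when $\{p_{xy},p_{xz},p_{yz}\}\in E(\ccA^{ijk})$; no other triples are edges. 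Any copy of a $3$-graph $F\in\cF$ in $H$ yields a reduced image of $F$ in $\ccA$ — the class indices give the map $\lambda$ and the elements $p_{xy}$ give the map $\phi$, and \ref{it:rm1}--\ref{it:rm3} of Definition~\ref{dfn:reduced_map} hold since every edge of $H$ meets three distinct classes — which contradicts $\ccA$ being $\cF$-free; hence $H$ is $\cF$-free. For the density, for any $X\subseteq V(H)$ one has $\EE\,e(X)=\sum_{ijk\in I^{(3)}}|X\cap V_i|\,|X\cap V_j|\,|X\cap V_k|\cdot\frac{e(\ccA^{ijk})}{|\cP^{ij}||\cP^{ik}||\cP^{jk}|}\ge d\binom{|X|}{3}-O(n^3/m)$, using the $(d,\vvv)$-density of $\ccA$ and that triples meeting some class at least twice number $O(n^3/m)$, where $n=mN$. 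Since resampling a single $p_{xy}$ changes $e(X)$ by at most $n$, a bounded-differences inequality gives $\PP[\,e(X)<\EE\,e(X)-\tfrac{\eta}{2}n^3\,]\le 2e^{-\Omega(\eta^2 n^2)}$, which survives the union bound over all $2^n$ sets $X$. Choosing $m$ so that $O(n^3/m)\le\tfrac{\eta}{2}n^3$ and then $N$ (hence $n$) large, $H$ is always $\cF$-free and with high probability it is also $(d,\eta)$-dense, with $|V(H)|$ as large as desired; letting $d\uparrow\pired(\cF)$ gives the inequality.

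For $\pivvv(\cF)\le\pired(\cF)$ the plan is to regularize. Fix $d<\pivvv(\cF)$ and $m\in\NN$, pick $d<d'<\pivvv(\cF)$, and let $\eta>0$ be small enough for the regularity lemma. There are arbitrarily large $\cF$-free, $(d',\eta)$-dense $3$-graphs $H$; apply the hypergraph regularity method to such an $H$ to obtain a reduced $3$-graph $\ccA$ whose vertex classes are the blocks of the regular pair--partition and whose triples are the dense, regular triads, with index set of size at least $m$ (and then restrict to $m$ indices). Then $\ccA$ is $\cF$-free: a reduced image of some $F\in\cF$ in $\ccA$ would, by the counting/embedding lemma for $3$-uniform hypergraphs, produce a copy of $F$ in $H$ — this is where the finiteness of $\cF$ enters, since one then needs only a single, uniform choice of regularity parameters serving all $F\in\cF$. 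Granting moreover that $\ccA$ is $(d'-o(1),\vvv)$-dense (the point discussed below), we get $d'-o(1)\le\pired(\cF)$; sending $n\to\infty$ and then $d'\downarrow d$ yields $d\le\pired(\cF)$, hence the inequality, and with the first part the theorem.

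The hard part is the transfer of density in the last paragraph: showing that the reduced $3$-graph extracted from a $(d',\eta)$-dense $H$ is $(d'-o(1),\vvv)$-dense, i.e.\ that \emph{every} triple of indices spans at least a $(d'-o(1))$-fraction of its triads. A direct count only yields this on \emph{average} over triples of classes, because the $(d',\eta)$-density bound applied to a union $X$ of a few classes is largely consumed by the edges of $H$ lying inside one or two classes, so individual triples of classes may carry few edges of $H$; thus the naive reduced $3$-graph need not be $(d'-o(1),\vvv)$-dense. Overcoming this — by running the regular-partition iteration so that it respects the $(d',\eta)$-density at every scale and discards the offending classes, leaving a reduced $3$-graph that is genuinely $(d'-o(1),\vvv)$-dense — is the technical core of \cite{RRS:18}, to which (together with \cite{R:20}) we refer for the full argument.
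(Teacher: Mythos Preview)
The paper does not give its own proof of this theorem; it is quoted verbatim as a result from \cite{R:20} (implicit in \cite{RRS:18}) and used as a black box. So there is no in-paper argument to compare against, and your decision to supply only a sketch referring to those sources is entirely appropriate.

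Your sketch has the right architecture. The random construction for $\pired(\cF)\le\pivvv(\cF)$ is the standard one and the details you give (the reduced-image argument for $\cF$-freeness, linearity of expectation plus bounded differences for the density) are correct. For $\pivvv(\cF)\le\pired(\cF)$ you correctly identify hypergraph regularity plus the counting/embedding lemma as the tools, note where finiteness of $\cF$ enters, and flag the density transfer as the genuine technical obstacle that must be delegated to \cite{RRS:18}.

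One small correction to your diagnosis of that obstacle: the reason the $(d',\eta)$-density of $H$ does not directly force every triple of classes to be dense is \emph{not} that the bound on $e(X)$ is ``consumed by edges lying inside one or two classes'' --- those account for only $O(|X|^3/m)$ edges, which is a lower-order term. The real culprit is the additive error $\eta n^3$: when $X$ is the union of only three classes, $|X|\approx 3n/m$, so $d'\binom{|X|}{3}=O((n/m)^3)$ is dwarfed by $\eta n^3$ and the inequality $e(X)\ge d'\binom{|X|}{3}-\eta n^3$ is vacuous. This is precisely why the argument in \cite{RRS:18} needs the more elaborate iteration you allude to, rather than a single application of regularity to a fixed $H$.
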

	
	For the next lemma, we need to set up the following notation.
	Let~$\ccA=(U,\cP^{ij},\ccA^{ijk})$ be a reduced~$3$-graph and let~$\cS^{ij}\subseteq\cP^{ij}$ be multisets, for all~$ij\in U^{(2)}$.
	For all~$ij\in U^{(2)}$, set~$(\cS^{ij})'=\{(x,r):x\in\cS^{ij},r\in[\ell(x)]\}$, where~$\ell(x)$ is the multiplicity of~$x$ in~$\cS^{ij}$.
	Next, for all~$ijk\in U^{(3)}$, let~$(\ccA^{ijk})'$ be the~$3$-partite~$3$-graph with vertex classes~$(\cS^{ij})'$,~$(\cS^{ik})'$, and~$(\cS^{jk})'$ and edge set $$\{(x,a)(y,b)(z,c):xyz\in\ccA^{ijk},a\in[\ell(x)],b\in[\ell(y)],c\in[\ell(z)]\}\,.$$

    The following was a crucial technical lemma used in~\cite{L:24}, to obtain a palette from a reduced~$3$-graph with the appropriate dependence on~$\epsilon$ and~$m$.
	
	\begin{lemma}\label{lem:reduced_to_palette}
		For all~$\epsilon > 0$ there is some~$s\in\mathds{N}$ such that for all~$m\in\mathds{N}$ there is some~$N\in\mathds{N}$ with the following property.
		Every reduced~$3$-graph~$\ccA=([N],\cP^{ij},\ccA^{ijk})$ with density~$d\in[0,1]$ contains an index subset~$U \subseteq [N]$ with~$|U| \geq m$ and multisets~$\cS^{ij} \subseteq \cP^{ij}$, for all~$ij \in U^{(2)}$, such that each~$|\cS^{ij}|=s$ and the reduced~$3$-graph~$(U,(\cS^{ij})',(\ccA^{ijk})')$ is~$(d-\epsilon,\vvv)$-dense.
	\end{lemma}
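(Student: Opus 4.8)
The plan is to construct the desired reduced $3$-graph by sampling a bounded-size multiset from each vertex class uniformly at random, and then to extract a large index subset on which the induced density is good on every triple. Fix $s = s(\epsilon)\in\mathds N$, to be specified at the end. Given $\ccA=([N],\cP^{ij},\ccA^{ijk})$ of density $d$, choose for every pair $ij\in[N]^{(2)}$, independently, an ordered sequence $(v^{ij}_1,\dots,v^{ij}_s)$ with each coordinate uniform in $\cP^{ij}$, and let $\cS^{ij}$ be the resulting multiset, so that $|\cS^{ij}|=|(\cS^{ij})'|=s$. For a fixed triple $ijk$, the density of $(\ccA^{ijk})'$ with respect to these samples equals
\[
	\frac{1}{s^3}\,\bigl|\bigl\{(a,b,c)\in[s]^3 : \{v^{ij}_a,v^{ik}_b,v^{jk}_c\}\in E(\ccA^{ijk})\bigr\}\bigr|\,,
\]
whose expectation is exactly $e(\ccA^{ijk})/(|\cP^{ij}||\cP^{ik}||\cP^{jk}|)\geq d$. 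Resampling any single one of the $3s$ coordinates changes this quantity by at most $1/s$, so McDiarmid's bounded-differences inequality gives
\[
	\PP\bigl[\,\text{the density of }(\ccA^{ijk})'\text{ is less than }d-\epsilon\,\bigr]\leq 2\exp(-c\,\epsilon^2 s)=:\delta
\]
for an absolute constant $c>0$. Call a triple $ijk$ \emph{bad} if the density of $(\ccA^{ijk})'$ falls below $d-\epsilon$; by linearity $\EE[\#\text{bad triples}]\leq\delta\binom N3$, so we may fix a realisation of the samples with at most $\delta\binom N3$ bad triples.

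It then remains to find $U\subseteq[N]$ with $|U|\geq m$ spanning no bad triple, for then $(U,(\cS^{ij})',(\ccA^{ijk})')$ is $(d-\epsilon,\vvv)$-dense straight from the definition. This is the step that needs genuine care, and I expect it to be the main obstacle: since $s$, and hence $\delta$, must not depend on $m$, one cannot union bound over the $\binom m3$ triples spanned by a prospective $U$, nor simply invoke a Tur\'an-type statement, because a $3$-graph with a fixed positive density of edges need not contain an independent set of size $m$. What rescues the argument is that the bad events for vertex-disjoint triples are mutually independent, since the density of $(\ccA^{ijk})'$ depends only on the three samples $\cS^{ij},\cS^{ik},\cS^{jk}$. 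Accordingly I would build $U$ greedily, one index at a time, keeping a large ``reservoir'' $W\subseteq[N]$ of admissible indices together with the invariant that every sample already fixed is ``typical for the current reservoir'' --- so that, for a uniformly random $w\in W$ together with fresh random samples on the new pairs $\{u_a,w\}$, each prospective new triple is bad with probability only $O(\delta)$. Each step selects the next index and its new samples probabilistically and then deletes from the reservoir the $O(\delta)$-fraction of indices that would either create too many bad triples or spoil the invariant; after $m$ steps the reservoir is still nonempty provided $N$ is large in terms of $m$ and $\epsilon$, which is exactly the dependence allowed. The delicate point --- and where I would follow Lamaison~\cite{L:24} --- is to organise this iteration so that the loss at each step is controlled by $\delta$ alone rather than by the current size of $U$.

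Finally, one picks $s$ large enough in terms of $\epsilon$ that $\delta=2\exp(-c\epsilon^2 s)$ lies below the absolute threshold required by the extraction step, and $N$ large enough in terms of $m$ and $\epsilon$. For the resulting $U$ and samples $\{\cS^{ij}\}_{ij\in U^{(2)}}$, every $ijk\in U^{(3)}$ satisfies $e((\ccA^{ijk})')\geq(d-\epsilon)\,|(\cS^{ij})'||(\cS^{ik})'||(\cS^{jk})'|$, which is precisely the assertion that $(U,(\cS^{ij})',(\ccA^{ijk})')$ is $(d-\epsilon,\vvv)$-dense, as required.
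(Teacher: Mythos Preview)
The paper does not prove this lemma; the sentence immediately preceding it attributes the result to Lamaison~\cite{L:24}, and no argument is given in the present paper. So there is no in-paper proof to compare against.

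On the merits of your outline: the sampling step and the McDiarmid bound are correct and standard, and you have correctly isolated the real obstacle, namely that the quantifier order demands $s$ (hence $\delta$) before $m$, so that a Tur\'an-type or first-moment search for an independent set in the ``bad-triple'' $3$-graph cannot succeed. However, your proposed remedy is not a proof as written. The greedy scheme you sketch must, at step $t+1$, simultaneously satisfy $\binom{t}{2}$ ``good triple'' constraints, each involving two of the $t$ freshly sampled multisets $\cS^{u_a u_{t+1}}$; a union bound over these constraints costs a factor of $t^2$, and your description does not explain how to avoid this loss, which would force $\delta\lesssim m^{-2}$ and hence make $s$ depend on $m$. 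The reservoir/invariant language is suggestive but not made precise --- you do not state what ``typical for the current reservoir'' means, nor show that the invariant survives each step with only an $O(\delta)$-fraction deleted --- and you explicitly defer the ``delicate point'' to~\cite{L:24}. So the sketch is pointed in a reasonable direction, but the step that carries the whole weight of the quantifier structure is missing.
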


        By applying Theorem~\ref{thm:reduced_to_uniform}, the following theorem suffices to complete the proof of Theorem~\ref{thm:main}.
	
	\begin{theorem}\label{thm:pired_eq_lagrangian}
		Let~$P$ be a palette.
		Then there exists a finite family~$\cH$ of~$3$-graphs so that~$\pired(\cH) = \Lambda_P$.
	\end{theorem}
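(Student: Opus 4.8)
The plan is to reduce to a reduced palette, apply Theorem~\ref{thm:expal_eq_blowup}, and translate between paintings of $3$-graphs by palettes and reduced images of $3$-graphs in reduced $3$-graphs.

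First, since a homomorphism never increases the palette Lagrangian (Observation~\ref{obs:QcontainP}), passing to subpalettes can only decrease it, so by iteratively deleting patterns (and, if that still leaves a proper subpalette of equal Lagrangian, colours) I may replace $P$ by a \emph{reduced} subpalette $P^{\ast}\subseteq P$ with $\Lambda_{P^{\ast}}=\Lambda_{P}$; from now on assume $P$ is reduced with $C(P)=[t]$. Let $\cH$ be the finite family produced by Theorem~\ref{thm:expal_eq_blowup}, so that $P$ is $\cH$-deficient and $\expal(n,\cH)=\max\{e(S):S\text{ a blow-up of }P,\ c(S)=n\}$ for every $n$. Since every blow-up $S$ of $P$ satisfies $d(S)\le\Lambda_{S}\le\Lambda_{P}$ by \eqref{eq:lambda_density} and Observation~\ref{obs:QcontainP}, we get $\expal(s,\cH)\le\Lambda_{P}s^{3}$ and hence every $\cH$-deficient palette $R$ has $d(R)\le\Lambda_{P}$. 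I will show $\pired(\cH)=\Lambda_{P}$. The key is a dictionary between palettes and reduced $3$-graphs: to a palette $R$ with $C(R)=[s]$ and an ordered index set $I$ associate the \emph{canonical reduced $3$-graph} $\ccA(R,I)$ whose pair classes $\cP^{uv}$ ($uv\in I^{(2)}$) are disjoint copies of $[s]$ and whose triple $3$-graph on $(\cP^{uv},\cP^{uw},\cP^{vw})$ for $u<v<w$ contains the edge $abc$ precisely when $(a,b,c)\in R$. Unwinding Definitions~\ref{dfn:reduced_hypergraph}--\ref{dfn:reduced_map} one checks that $\ccA(R,I)$ is $(d(R),\vvv)$-dense and that, for any $3$-graph $F$ with $v(F)\le|I|$, the reduced $3$-graph $\ccA(R,I)$ contains a reduced image of $F$ if and only if $R$ paints $F$ (a painting $(<,\chi)$ gives a reduced map via an order-preserving injection $V(F)\to I$ and $\phi(uv)=\chi(uv)$; conversely a reduced map $(\lambda,\phi)$ gives a painting after refining to a total order the preorder on $V(F)$ pulled back from $I$ along $\lambda$).

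For the lower bound, take any blow-up $S$ of $P$ and any $m\in\NN$. Then $\ccA(S,[m])$ is $(d(S),\vvv)$-dense, and it is $\cH$-free: a reduced image of some $F\in\cH$ would force $S$ to paint $F$, hence, via the homomorphism $S\to P$ and Fact~\ref{fact:hom_invariant}, force $P$ to paint $F$, contradicting that $P$ is $\cH$-deficient. As $m$ is arbitrary, $\pired(\cH)\ge d(S)$, and letting $d(S)\to\Lambda_{P}$ (possible by \eqref{eq:lambda_blowup}) yields $\pired(\cH)\ge\Lambda_{P}$. For the upper bound, fix $d>\Lambda_{P}$ and set $\epsilon=(d-\Lambda_{P})/2$. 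Let $s$ be the constant from Lemma~\ref{lem:reduced_to_palette} for $\epsilon$, let $M=\max_{F\in\cH}v(F)$, let $r$ be so large that every colouring of the triples of an $r$-set with $2^{s^{3}}$ colours contains a monochromatic $M$-set, and let $N$ be the constant from Lemma~\ref{lem:reduced_to_palette} for $\epsilon$ and $m=r$. If $\ccA$ is any $(d,\vvv)$-dense reduced $3$-graph with index set of size at least $N$, then Lemma~\ref{lem:reduced_to_palette} gives an index set $U$ with $|U|\ge r$ and multisets $\cS^{ij}$ of size $s$ such that $\ccA'=(U,(\cS^{ij})',(\ccA^{ijk})')$ is $(d-\epsilon,\vvv)$-dense, and every pair class of $\ccA'$ has exactly $s$ elements. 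Fixing a bijection of each pair class with $[s]$, every ordered triple of $U$ gets a type in $\{0,1\}^{s^{3}}$; by the choice of $r$ there is $W\subseteq U$ with $|W|=M$ on which the type is a constant $Q\subseteq[s]^{3}$, and then the restriction of $\ccA'$ to $W$ is isomorphic to $\ccA(Q,W)$, with $Q$ read as a palette. Since $\ccA'$ is $(d-\epsilon,\vvv)$-dense we get $d(Q)\ge d-\epsilon>\Lambda_{P}$, so $Q$ is not $\cH$-deficient; choose $F\in\cH$ painted by $Q$, noting $v(F)\le M=|W|$. By the dictionary $\ccA(Q,W)$ contains a reduced image of $F$, and composing with the projections $(\cS^{ij})'\to\cS^{ij}\subseteq\cP^{ij}$ and $(\ccA^{ijk})'\to\ccA^{ijk}$ turns this into a reduced image of $F$ in $\ccA$. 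Hence no $(d,\vvv)$-dense reduced $3$-graph with index set of size $N$ is $\cH$-free, so $\pired(\cH)\le d$; letting $d\downarrow\Lambda_{P}$ completes the proof.

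The hard part is the upper bound, and within it the passage from a general dense reduced $3$-graph to a canonical reduced $3$-graph of a single palette: Lemma~\ref{lem:reduced_to_palette} is used to make all the pair classes the same size, and then a hypergraph Ramsey argument over the index set makes all the triple $3$-graphs agree, producing a palette $Q$ with $d(Q)>\Lambda_{P}$; Theorem~\ref{thm:expal_eq_blowup} (in the form $\expal(s,\cH)\le\Lambda_{P}s^{3}$) then rules out $Q$ being $\cH$-deficient, and the painting-to-reduced-image half of the dictionary yields the contradiction. The reduction of $P$ to a reduced palette, the construction and basic properties of $\ccA(R,I)$, and the lower bound are routine.
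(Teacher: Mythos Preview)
Your proof is correct and follows essentially the same route as the paper: reduce to a reduced palette, invoke Theorem~\ref{thm:expal_eq_blowup}, and for the upper bound pass through Lemma~\ref{lem:reduced_to_palette} followed by a $3$-uniform Ramsey argument to extract a single palette $Q$ whose density exceeds $\Lambda_P$, forcing $Q$ to paint some $F\in\cH$ and hence $\ccA$ to contain a reduced image of $F$.

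The one genuine difference is in the lower bound. The paper obtains $\pired(\cH)\ge\Lambda_P$ by going through $\pivvv$: it invokes Fact~\ref{lem:palette_lb} (the palette construction) to get $\pivvv(\cH)\ge\Lambda_Q$ and then Theorem~\ref{thm:reduced_to_uniform} to translate this into $\pired$. You instead build the reduced $3$-graph $\ccA(S,[m])$ directly from a blow-up $S$ of $P$ and check by hand, via your painting/reduced-image dictionary and Fact~\ref{fact:hom_invariant}, that it is $\cH$-free and $(d(S),\vvv)$-dense. This is a cleaner and more self-contained argument: it avoids appealing to Theorem~\ref{thm:reduced_to_uniform} (and hence to hypergraph regularity) for the lower bound, which the paper's route does implicitly. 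For the upper bound the two arguments are contrapositives of one another and use exactly the same ingredients.
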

	\begin{proof}
		If~$P$ is not reduced, there exists a reduced palette~$P'\subsetneq P$ with~$\Lambda_{P'}=\Lambda_P$ that we could consider instead, so we may assume that~$P$ is reduced.
		Then Theorem~\ref{thm:expal_eq_blowup} yields a finite family~$\cH$ such that~$P$ is~$\cH$-deficient and for all~$n\in\mathds{N}$
		\begin{align}\label{eq:expalgood}
			\expal(n,\cH)=\max\{e(Q):Q\text{ is a blow-up of $P$ and $c(Q)=n$}\}\,.
		\end{align}
		We will show that for every~$\varepsilon>0$ we have~$\Lambda_P-\varepsilon\leq\pired(\cH)\leq\Lambda_P+2\varepsilon$.
		So let~$\epsilon>0$ be given.
		First we show that~$\Lambda_P-\epsilon \leq \pired(\cH)$.
		By~\eqref{eq:lambda_blowup}, there is some~$n_0$ such that for every~$n\in\mathds{N}$ with~$n\geq n_0$ there is a palette~$Q$ with~$n$ colors which is a blow-up of~$P$, attains the maximum on the right-hand side in~\eqref{eq:expalgood}, and satisfies~$\frac{e(Q)}{n^3}\geq \Lambda_P-\epsilon$.
        Now it follows from (\ref{eq:lambda_density}), Fact~\ref{lem:palette_lb}, and Theorem~\ref{thm:reduced_to_uniform} that $$\Lambda_P-\varepsilon\leq\frac{e(Q)}{n^3}\leq\Lambda_{Q}\leq\pivvv(\cH)=\pired(\cH)\,.$$
		
		Next we show that~$\pired(\cH)-2\epsilon \leq \Lambda_P$.
        First we observe that for all~$s\in\NN$ we have
        \begin{align}\label{eq:approxLambda}
            \frac{\expal(s,\cH)}{s^3}\leq\Lambda_P\,.
        \end{align}
        Indeed, by~\eqref{eq:expalgood} and~\eqref{eq:lambda_density}, there is a blow-up of~$Q$ with~$c(Q)=s$ such that~$\expal(s,\cH)/s^3=d(Q)\leq\Lambda_Q$.
        Since~$Q$ is a blow-up of~$P$, Observation~\ref{obs:QcontainP} entails that~$\Lambda_Q\leq\Lambda_P$, and~\eqref{eq:approxLambda} follows.
        
        Now let~$m$ be the maximum number of vertices of any~$H\in\cH$ and let~$s\in\mathds{N}$ be given by applying Lemma~\ref{lem:reduced_to_palette} with~$\varepsilon$.
        Next, let~$R$ be the Ramsey number~$R_3(m;2^{s^3})$ (i.e.,~$R$ is the smallest integer such that any coloring of the~$3$-edges of~$K_R^{(3)}$ with~$2^{s^3}$ colors contains a monochromatic~$K_m^{(3)}$).
        Finally, let~$N\in\mathds{N}$ be as guaranteed by (the conclusion of) Lemma~\ref{lem:reduced_to_palette} applied to~$R$ here instead of~$m$ there.
		Let~$\ccA=([N],\cP^{ij},\ccA^{ijk})$ be a~$(\pired(\cH) - \epsilon,\vvv)$-dense $\cH$-free reduced~$3$-graph.
		The conclusion of Lemma~\ref{lem:reduced_to_palette} provides an index subset~$U \subseteq [N]$ with~$|U| \geq R$ and multisets~$\cS^{ij} \subseteq \cP^{ij}$ with~$\vert \cS^{ij}\vert=s$, for all~$ij \in U^{(2)}$, such that the reduced~$3$-graph~$(U,(\cS^{ij})',(\ccA^{ijk})')$ is~$(\pired(\cH)-2\epsilon,\vvv)$-dense.
		For all~$ij\in U^{(2)}$ we identify~$(\cS^{ij})'$ (arbitrarily) with~$[s]$.
		Then each~$(\ccA^{ijk})'$ can be viewed as one of the~$s^3$ possible subsets of~$[s]^3$.
		This yields a~$2^{s^3}$-coloring of~$U^{(3)}$, whence our choice of~$R$ provides an index set $U' \subseteq U$ with $\vert U'\vert =m$ and a subset~$G\subseteq[s]^3$ so that for each~$ijk \in (U')^{(3)}$ with~$i<j<k$,~$(\ccA^{ijk})'$ corresponds to~$G$ (under the fixed identifications of~$S^{ij}$,~$S^{ik}$, and~$S^{jk}$ with~$[s]$).
		Now~$G$ can naturally be interpreted as a palette~$G'$ with~$C(G')=[s]$ and~$E(G')=G$.
		Since~$e((\ccA^{ijk})')\geq(\pired(\cH)-2\epsilon)s^3$, it follows that~$d(G') \geq \pired(\cH)-2\epsilon$.
		Further, it can easily be checked that if~$G'$ paints any~$H\in\cH$, this would entail a reduced image of~$H$ in~$(U,(\cS^{ij})',(\ccA^{ijk})')$ and thus in~$\ccA$.
		Hence,~$\ccA$ being~$\cH$-free, we know that~$G'$ is~$\cH$-deficient.
		Therefore, using~(\ref{eq:approxLambda}), we get
		$$\pired(\cH)-2\epsilon \leq \frac{e(G')}{s^3} \leq \frac{\expal(s,\cH)}{s^3}\leq\Lambda_P\,.$$
        This concludes the proof of the theorem.
	\end{proof}
	
	\section{Concluding remarks}
	
	In this work, we obtain that the Lagrangian of any finite palette is attained as the uniform Tur\'{a}n density of a finite family of~$3$-graphs.
    A consequence of this result combined with~\cite{L:24} is that
    \begin{align}\label{eq:summary}
        \Lambda_{\pal} \subseteq \Pi_{\vvv, \fin} \subseteq \Pi_{\vvv,\infty}\subseteq\widebar{\Lambda}_{\pal}\,.
    \end{align}
    It would be interesting to determine which of these inclusions are strict. 
    In addition, if~$\Pi_{\vvv}$ denotes the set of uniform Tur\'{a}n densities of single~$3$-graphs, is it true that~$\Pi_{\vvv,} \subsetneq \Pi_{\vvv, \fin} \subsetneq  \Pi_{\vvv,\infty}$?
    In \cite{P:12}, it was proved that the set of Tur\'{a}n densities of possibly infinite families of~$k$-graphs,~$\Pi^{(k)}_{\infty}$ is uncountable and closed for~$k\geq 3$.
    One could ask whether similar statements hold for~$\Pi_{\vvv,\infty}$.
    We remark that a direct application of the methods used in~\cite{P:12} does not seem to work here. 

    We say that~$d \in [0,1)$ is a \emph{jump} in a set~$X \subseteq [0,1]$ if there exists some~$\varepsilon>0$ such that $(d,d+\varepsilon)\cap X=\emptyset$.
    Erd\H{o}s~\cite{E:64} showed that for every~$k\geq2$,~$0$ is a jump in~$\Pi_{\infty}^{(k)}$.
    On the other hand, Frankl and R\"{o}dl~\cite{FR:84} proved that for every~$k\geq3$ there is some~$d \in [0,1)$ that is not a jump in $\Pi^{(k)}_{\infty}$, disproving the famous Erd\H{o}s jumping conjecture.
    For the uniform Tur\'{a}n density, Reiher, R\"{o}dl and Schacht~\cite{RRS:18} showed that $0$ is a jump for~$\Pi_{\vvv,\infty}$.
    A consequence of our work is that every non-jump in~$\Pi^{(3)}_{\infty}$ yields a non-jump in~$\Pi_{\vvv,\fin}$ (see~\cite{KSS:24}).

    Note that the palettes considered here, as well as in~\cite{KSS:24} and~\cite{L:24}, have finitely many colors.
    One might ask what the situation looks like for a (countably) infinite palette, which is a pair~$P=(C,E)$ consisting of an infinite set of colors~$C$ and an infinite set of patterns~$E\subseteq C^3$.

    To define the Lagrangian of an infinite palette, note that the Lagrangian of a (finite) hypergraph~$F$ is (up to scaling) simply the maximum edge density a blow-up of~$F$ can have.
    Following this spirit, a reasonable way to define the ``Lagrangian'' of an infinite palette~$P$ is
    \begin{align*}
        \Lambda_P=\sup\{d\in[0,1]: \: &\text{ for every $\eta>0$ and $n\in\mathds{N}$, there is} \\ &\text{a $(d,\eta)$-dense $3$-graph $H$ with $v(H)\geq n$ such that $P$ paints $H$}\}\,.
    \end{align*}
    From~\cite{KSS:24} it follows that for finite palettes this definition is equivalent to our previous definition.
    Now one can ask whether Theorem~\ref{thm:main} still holds for every infinite palette~$P$.
    Lamaison and Wu~\cite{LW:25+} announced that there exists a~$3$-graph~$F$ such that there is no finite palette~$P$ that is~$F$-deficient and satisfies~$\Lambda_P=\pivvv(F)$.
    Note that this means that~$\Pi_{\vvv,\text{fin}}\subsetneq \Lambda_{\pal}$. 
    Setting~$\Lambda_{\pal,\infty}=\{\Lambda_P:\: P\text{ is a finite or infinite palette}\}$, it would be curious if in fact any of the sets~$\Lambda_{\pal,\infty}$,~$\Pi_{\vvv,\text{fin}}$, and~$\Pi_{\vvv,\infty}$ are equal.
    
	\bibliographystyle{plain}
	\bibliography{refs} 
	
	\appendix
	\section{Weak Palette Regularity}\label{app:regularity}
	Before we begin with details we provide a brief overview. Our proof of Theorem~\ref{thm:palette_regularity} will be as expected. We will define an energy function~$q$ which takes as argument a partition of~$C(Q)$ and returns an element of~$[0,1]$, and show both that refining a partition cannot decrease the value of~$q$, and that if~$\cA$ fails to be~$\epsilon$-regular then there exists a refinement which substantially increases~$q$.
	Once we have proven Theorem~\ref{thm:palette_regularity}, Corollary \ref{cor:iterated_palette_regularity_app} will be obtained through the following intermediary result. 
	
	\begin{theorem}\label{thm:iterated_palette_regularity}
		For all non-increasing~$\mathcal{E}(r): \NN \to (0,1]$ and~$m$ there exists~$M=M_{\ref{thm:iterated_palette_regularity}},N=N_{\ref{thm:iterated_palette_regularity}}$ so that given any palette~$Q$ with $c(Q)\geq N$ there is an equipartition $\cA=\{V_i : i \in [t] \}$ of $C(Q)$ with refinement $\cB = \{V_{i,j}:i \in [t], j \in [\ell]\}$ so that:
		\begin{enumerate}[label = \nlabel]
			\item \label{it:part_count}$m \leq t$ and $t\ell \leq M$,
			\item \label{it:eps0} all but $\cE(0) t^3$ of $(i_1,i_2,i_3) \in [t]^3$ have the ordered triple $(V_{i_1},V_{i_2},V_{i_3})$~$\cE(0)$-regular,
			\item \label{it:epst} for all~$(i_1,i_2,i_3) \in [t]^3$, all but~$\cE(t)\ell^3$ of $(j_1,j_2,j_3) \in [\ell]^3$ have the ordered triple  $(V_{i_1,j_1},V_{i_2,j_2},V_{i_3,j_3})$~$\cE(t)$-regular, and
			\item \label{it:palette-reg-density} for all but $\cE(0)t^3$ of~$(i_1,i_2,i_3) \in [t]^3$, all but~$\cE(0)\ell^3$ of~$(j_1,j_2,j_3) \in [\ell]^3$ have\newline~$|d(V_{i_1,j_1},V_{i_2,j_2},V_{i_3,j_3})-d(V_{i_1},V_{i_2},V_{i_3})|< \cE(0)$.
		\end{enumerate}
	\end{theorem}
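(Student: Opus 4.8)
The plan is to adapt to palettes the iterated-regularity scheme of Alon--Fischer--Krivelevich--Szegedy~\cite{AFKS:00}, using Theorem~\ref{thm:palette_regularity} as the basic engine. For an equipartition $\cP=\{W_i:i\in[r]\}$ of $C(Q)$ put $q(\cP):=\sum_{(i_1,i_2,i_3)\in[r]^3}\tfrac{|W_{i_1}||W_{i_2}||W_{i_3}|}{c(Q)^3}d(W_{i_1},W_{i_2},W_{i_3})^2\in[0,1]$; this is (up to normalization) the mean-square density whose monotonicity under refinement drives the proof of Theorem~\ref{thm:palette_regularity}, and when $\cP'$ refines $\cP$ by splitting each part into equally sized pieces one has the identity $q(\cP')-q(\cP)=\sum_{(i_1,i_2,i_3)}\tfrac{|W_{i_1}||W_{i_2}||W_{i_3}|}{c(Q)^3}\EE_{j_1,j_2,j_3}\big[(d(W_{i_1,j_1},W_{i_2,j_2},W_{i_3,j_3})-d(W_{i_1},W_{i_2},W_{i_3}))^2\big]$, the expectation being uniform over sub-indices.

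First I would fix $\delta:=\cE(0)^{6}$ and build equipartitions $\cA_0\prec\cA_1\prec\cdots$ of $C(Q)$. Let $\cA_0$ be produced by Theorem~\ref{thm:palette_regularity} with error $\cE(0)$ and lower bound $m$, so $m\le t_0:=|\cA_0|\le M_{\ref{thm:palette_regularity}}(\cE(0),m)$ and all but $\cE(0)t_0^3$ of its triples are $\cE(0)$-regular. Given $\cA_i$ on $t_i$ parts, apply the refinement clause of Theorem~\ref{thm:palette_regularity} to $\cA_i$ with error $\epsilon_i:=\cE(t_i)/t_i^3$ and lower bound $t_i$, obtaining an equipartition $\cA_{i+1}$ refining $\cA_i$ with $t_i\le|\cA_{i+1}|\le$ an a priori constant (depending only on $\cE$ and $m$) and all but $\epsilon_i|\cA_{i+1}|^3$ of its ordered triples $\epsilon_i$-regular. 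If $q(\cA_{i+1})-q(\cA_i)\ge\delta$, continue; since $q\le 1$ this occurs at most $1/\delta$ times, so there is a first $i_0\le 1/\delta$ with $q(\cA_{i_0+1})-q(\cA_{i_0})<\delta$. Output $\cA:=\cA_{i_0}$ and $\cB:=\cA_{i_0+1}$, write $t:=|\cA|$, and note that by the usual normalization we may assume $t\mid|\cB|$, so $\ell:=|\cB|/t$ is an integer and each part of $\cA$ meets $\cB$ in $\ell$ parts of sizes differing by at most one, which for $c(Q)$ large we treat as exactly equal. All part-counts are bounded by a fixed tower of applications of $M_{\ref{thm:palette_regularity}}$ of height $\le 1/\delta$, which we take as $M$; $N$ is chosen large enough for all invocations of Theorem~\ref{thm:palette_regularity} to be valid.

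Now I would verify \ref{it:part_count}--\ref{it:palette-reg-density}. Item \ref{it:part_count} holds since $t\ge t_0\ge m$ and $t\ell=|\cB|\le M$. For \ref{it:eps0}: if $i_0=0$ this is the first application; if $i_0\ge 1$ then $\cA$ was produced with error $\epsilon_{i_0-1}\le\cE(t_{i_0-1})\le\cE(0)$ (as $\cE$ is non-increasing), so $\epsilon_{i_0-1}$-regular triples are $\cE(0)$-regular and there are at most $\epsilon_{i_0-1}|\cA|^3\le\cE(0)|\cA|^3$ exceptions. For \ref{it:epst}: $\cB$ was produced from $\cA$ with error $\epsilon_{i_0}=\cE(t)/t^3\le\cE(t)$, so $\epsilon_{i_0}$-regular triples are $\cE(t)$-regular, while the total number of $\epsilon_{i_0}$-irregular ordered triples among all $|\cB|^3=(t\ell)^3$ of them is at most $\epsilon_{i_0}(t\ell)^3=\cE(t)\ell^3$; hence for \emph{each} fixed $(i_1,i_2,i_3)\in[t]^3$ at most $\cE(t)\ell^3$ of the $(j_1,j_2,j_3)\in[\ell]^3$ are irregular. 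For \ref{it:palette-reg-density}: the identity above and $q(\cB)-q(\cA)<\delta$ give, since $\cA$ is an equipartition, $\sum_{(i_1,i_2,i_3)\in[t]^3}\EE_{j_1,j_2,j_3}\big[(d(V_{i_1,j_1},V_{i_2,j_2},V_{i_3,j_3})-d(V_{i_1},V_{i_2},V_{i_3}))^2\big]\le 2\delta t^3$ for $c(Q)$ large, and two applications of Markov's inequality (over blocks, then inside a block) then yield that for all but $\cE(0)t^3$ of the $(i_1,i_2,i_3)$, all but $\cE(0)\ell^3$ of the $(j_1,j_2,j_3)$ satisfy $|d(V_{i_1,j_1},V_{i_2,j_2},V_{i_3,j_3})-d(V_{i_1},V_{i_2},V_{i_3})|<\cE(0)$, using $\delta=\cE(0)^6$.

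The genuinely delicate point is not any single estimate but matching the \emph{quantifier structure} of \ref{it:epst} and \ref{it:palette-reg-density} to the essentially ``global'' conclusion of Theorem~\ref{thm:palette_regularity}: one must run the $(i{+}1)$-st refinement with an error smaller than $\cE(t_i)$ by the factor $t_i^{-3}$, precisely so that a global bound on the number of irregular triples in $[t_i\ell]^3$ can be billed block by block; and one must take $\delta$ to be a high enough power of $\cE(0)$ so that the single scalar inequality $q(\cB)-q(\cA)<\delta$ upgrades, via Markov, to the nested ``all-but-$\cE(0)$'' density statement. Checking that the a priori bound on the number of parts stays finite through the (boundedly many) iterations is routine, since $t_i$, hence $\epsilon_i$, is determined before step $i{+}1$, so there is no circularity.
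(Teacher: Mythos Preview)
Your proof is correct and follows essentially the same iterated-regularity scheme as the paper: repeatedly apply Theorem~\ref{thm:palette_regularity}, stop the first time the energy increment drops below a fixed power of~$\cE(0)$, and take the last two partitions as~$\cA$ and~$\cB$; the paper packages your double-Markov step as a separate lemma (Lemma~\ref{lem:q_close}) with threshold~$\cE(0)^4/64$ in place of your~$\cE(0)^6$. The only cosmetic difference is that you set~$\epsilon_i=\cE(t_i)/t_i^3$ using the \emph{actual} part count, whereas the paper uses~$\cE(M_{i-1})/M_{i-1}^3$ with the a~priori upper bound~$M_{i-1}\ge t_i$; your version then needs the (easy) monotonicity observation that~$t_i\le T_i$ for the recursively defined~$T_i=M_{\ref{thm:palette_regularity}}(\cE(T_{i-1})/T_{i-1}^3,T_{i-1})$ in order to get a uniform~$M$ independent of~$Q$, which you correctly flag as routine.
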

	
	The relationship between our~Theorem~\ref{thm:palette_regularity}, Corollary~\ref{cor:iterated_palette_regularity_app}, and Theorem~\ref{thm:iterated_palette_regularity} is entirely analogous to those found in \cite{AFKS:00} between Lemma 3.3 (the traditional Szemer\'edi graph regularity lemma), Lemma 4.1, and Corollary 4.2 there. Our proofs use the same strategies.

    Let us begin now with the details of Theorem~\ref{thm:palette_regularity}.
    We follow closely the probabilistic techniques presented in~\cite{AS:15}, working here with palettes in place of graphs. For ease of notation we will still refer to colors as vertices, and patterns as edges on these vertices.
    We are interested in partitions of~$C(Q)$, and we let~$n=c(Q)$ throughout. We will, in intermediate steps, allow our equipartitions to include a small set~$V_0$ of exceptional vertices.
    As part of a partition of~$C(Q)$ we consider~$V_0$ as composed of singleton vertices, so that~$\cB = U_0 \dcup U_1 \dcup \dots \dcup U_{t'}$ refines~$\cA = V_0 \dcup V_1 \dcup \dots \dcup V_t$ (denoted~$\cB \prec \cA$) so long as, for each~$i \in [t]$,~$V_i$ is obtained exactly as the union of some $U_j$ together with some vertices from~$U_0$, and~$V_0 \subseteq U_0$. 
    Recalling Definition~\ref{dfn:palette_regularity}, we say that a partition~$\cA= V_0 \dcup V_1 \dcup \dots \dcup V_t$ is~$\epsilon$-regular if for all but~$\epsilon t^3$ of $(i_1,i_2,i_3) \in [t]^3$, the ordered triple~$(V_{i_1},V_{i_2},V_{i_3})$ is~$\epsilon$-regular. When we index over~$V \in \cA$ for a partition~$\cA$, we mean to take one term for each part of~$\cA$, denoted~$V$.
    With these notions in mind we can now define the energy~$q$.
	
	\begin{dfn}
		Suppose that~$V_1,V_2,V_3 \subseteq C(Q)$ with~$c(Q)=n$. Then$$q(V_1,V_2,V_3) := \frac{|V_1||V_2||V_3|}{n^3}d^2(V_1,V_2,V_3)\,.$$ If~$\cA_1,\cA_2,\cA_3$ are partitions of~$C(Q)$, we set$$q(\cA_1,\cA_2,\cA_3) := \sum_{V_1 \in \cA_1,V_2  \in \cA_2, V_3 \in \cA_3}q(V_1,V_2,V_3)\,.$$ We will use~$q(\cA)$ to refer to~$q(\cA,\cA,\cA)$ along a single partition. 
	\end{dfn}
	
	Since~$q(V_1,V_2,V_3) \leq \frac{|V_1||V_2||V_3|}{n^3}$ it is immediate that~$q(\cA_1,\cA_2,\cA_3) \in [0,1]$.
    The following technical lemma captures two more important properties we will require of the function~$q$.
	
	\begin{lemma}\label{lem:q_props}
        
        If~$\cB_1 \prec \cA_1,\cB_2 \prec \cA_2$, and~$\cB_3 \prec \cA_3$ then~$q(\cB_1,\cB_2,\cB_3) \geq q(\cA_1,\cA_2,\cA_3)$. Furthermore, if~$V_1,V_2,V_3 \subseteq C(Q)$ are so that~$(V_1,V_2,V_3)$ is not~$\epsilon$-regular, then there are partitions~$V_1=V_1^{1}\dcup V_1^{2},V_2=V_2^{1}\dcup V_2^{2},V_3=V_3^{1}\dcup V_3^{2}$ so that $$q(V_1^{1}\dcup V_1^{2},V_2^{1}\dcup V_2^{2},V_3^{1}\dcup V_3^{2}) \geq q(V_1,V_2,V_3)+\epsilon^5 \frac{|V_1||V_2||V_3|}{n^3}\,.$$

	\end{lemma}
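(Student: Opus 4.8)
The plan is to run the standard energy-increment argument behind Szemer\'edi's regularity lemma, in its three-dimensional form; since a palette is essentially an oriented $3$-graph, the orientation-sensitivity of the density $d(V_1,V_2,V_3)$ costs nothing here, as none of the inequalities involved uses symmetry. Write $n=c(Q)$ and observe $q(V_1,V_2,V_3)=e(V_1,V_2,V_3)^2/\big(n^3|V_1||V_2||V_3|\big)$, so that $q(\cA_1,\cA_2,\cA_3)=\EE[d(x,y,z)^2]$, where $(x,y,z)$ is uniform in $C(Q)^3$ and $d(x,y,z)$ denotes the density of the cell of $\cA_1\times\cA_2\times\cA_3$ containing $(x,y,z)$. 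From this viewpoint both assertions are the familiar fact that the $L^2$-norm of a conditional expectation is monotone under refinement of the underlying $\sigma$-algebra, with a quantitative defect coming from the identity $\EE[X^2]=(\EE X)^2+\Var(X)$; I would nonetheless present the argument combinatorially.

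For the monotonicity claim I would reduce to the case $\cB_2=\cA_2$, $\cB_3=\cA_3$, with $\cB_1$ obtained from $\cA_1$ by splitting a single part $V_1=V_1'\dcup V_1''$ into two nonempty pieces (a trivial split leaves $q$ unchanged, and the general refinement is reached by iterating over parts and over the three coordinates). For fixed $V_2\in\cA_2$ and $V_3\in\cA_3$ one has $e(V_1,V_2,V_3)=e(V_1',V_2,V_3)+e(V_1'',V_2,V_3)$, and the weighted Cauchy--Schwarz inequality $(a+b)^2/(p+q)\le a^2/p+b^2/q$ with $a=e(V_1',V_2,V_3)$, $b=e(V_1'',V_2,V_3)$, $p=|V_1'|$, $q=|V_1''|$ gives $q(V_1,V_2,V_3)\le q(V_1',V_2,V_3)+q(V_1'',V_2,V_3)$ after dividing by $n^3|V_2||V_3|$. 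Summing over $V_2\in\cA_2$ and $V_3\in\cA_3$ yields $q(\cA_1,\cA_2,\cA_3)\le q(\cB_1,\cA_2,\cA_3)$, which is the desired inequality.

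For the second claim, non-$\epsilon$-regularity of $(V_1,V_2,V_3)$ supplies $W_i\subseteq V_i$ with $|W_i|\ge\epsilon|V_i|$ and $|d(W_1,W_2,W_3)-d(V_1,V_2,V_3)|>\epsilon$; I would take $V_i^1=W_i$ and $V_i^2=V_i\setminus W_i$. Writing $m=d(V_1,V_2,V_3)$ and $\mu_{abc}=|V_1^a||V_2^b||V_3^c|/\big(|V_1||V_2||V_3|\big)$ for $(a,b,c)\in\{1,2\}^3$, additivity of $e$ over the eight cells gives $\sum_{a,b,c}\mu_{abc}\,d(V_1^a,V_2^b,V_3^c)=m$. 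Hence, letting $X$ take the value $d(V_1^a,V_2^b,V_3^c)$ with probability $\mu_{abc}$, one has $\sum_{a,b,c}\mu_{abc}\,d(V_1^a,V_2^b,V_3^c)^2=m^2+\Var(X)\ge m^2+\mu_{111}\big(d(W_1,W_2,W_3)-m\big)^2>m^2+\epsilon^3\cdot\epsilon^2$, using $\mu_{111}=|W_1||W_2||W_3|/\big(|V_1||V_2||V_3|\big)\ge\epsilon^3$. Multiplying through by $|V_1||V_2||V_3|/n^3$, the left-hand side becomes $q(V_1^1\dcup V_1^2,V_2^1\dcup V_2^2,V_3^1\dcup V_3^2)$ and the $m^2$ term becomes $q(V_1,V_2,V_3)$, giving the claimed increment $\epsilon^5|V_1||V_2||V_3|/n^3$.

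I do not expect a genuine obstacle; the one point that needs a word of care is that one or two of the witness sets $W_i$ may coincide with $V_i$, making the corresponding two-part split degenerate. But not all three $W_i$ can equal their $V_i$ (otherwise the two densities would be equal, contradicting $\epsilon$-irregularity), so leaving the degenerate coordinates unsplit — or splitting them arbitrarily, which by the monotonicity just proved only increases $q$ — keeps the variance lower bound through the cell $W_1\times W_2\times W_3$ intact. The rest is pure bookkeeping: tracking the eight-cell product partition and remembering that $d$ is the density of an \emph{ordered} triple, neither of which interferes with the (symmetry-free) Cauchy--Schwarz manipulations.
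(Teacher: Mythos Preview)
Your proof is correct and follows essentially the same energy-increment argument as the paper. The only cosmetic difference is packaging: for monotonicity you invoke the weighted Cauchy--Schwarz inequality $(a+b)^2/(p+q)\le a^2/p+b^2/q$ where the paper phrases the same step as $\Var(Z)\ge 0$ for a random cell-density $Z$, and for the increment you lower-bound $\Var(X)$ by the single term $\mu_{111}(d(W_1,W_2,W_3)-m)^2$ where the paper reaches the identical $\epsilon^5$ via Chebyshev's inequality---both pairs of arguments are equivalent reformulations of one another.
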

	\begin{proof}

		For the first part, it suffices to check the case when~$\cA_1=V_1,\cA_2=V_2,\cA_3=V_3$ consist of a single set each, since any~$\cB_i$ can be obtained by successive refinement in this way. 
			In this case we define a random variable~$Z$ as follows. Select vertices~$x_1 \in V_1,x_2 \in V_2,x_3 \in V_3$ uniformly at random and let~$U_1 \in \cB_1,U_2 \in \cB_2, U_3 \in \cB_3$ be the unique parts of their respective partitions so that~$x_1 \in U_1, x_2 \in U_2, x_3 \in U_3$ before setting~$Z = d(U_1,U_2,U_3)$.
			We can directly compute both the expectation
			\begin{align*}\bbE(Z) &= \sum_{U_1 \in \cB_1, U_2 \in \cB_2, U_3 \in \cB_3}\frac{|U_1||U_2||U_3|}{|V_1||V_2||V_3|}d(U_1,U_2,U_3)\\ &= \sum_{U_1 \in \cB_1, U_2 \in \cB_2, U_3 \in \cB_3}\frac{e(U_1,U_2,U_3)}{|V_1||V_2||V_3|}\\ &= d(V_1,V_2,V_3)
            \end{align*}
			and the second moment
			\begin{align*}\bbE(Z^2) &= \sum_{U_1 \in \cB_1, U_2 \in \cB_2, U_3 \in \cB_3}\frac{|U_1||U_2||U_3|}{|V_1||V_2||V_3|}d^2(U_1,U_2,U_3)\\ &= \frac{n^3}{|V_1||V_2||V_3|}\sum_{U_1 \in \cB_1, U_2 \in \cB_2, U_3 \in \cB_3}\frac{|U_1||U_2||U_3|}{n^3}d^2(U_1,U_2,U_3)\\
				& = \frac{n^3}{|V_1||V_2||V_3|}q(\cB_1,\cB_2,\cB_3)\,.\end{align*}
			Combining these yields
			~$$0 \leq \Var(Z) = \bbE(Z^2)-\bbE(Z)^2 = \frac{n^3}{|V_1||V_2||V_3|}\bigg{(}q(\cB_1,\cB_2,\cB_3)-q(V_1,V_2,V_3)\bigg{)}$$
			as needed.
            
			Now we proceed to the `furthermore' part. Let~$V_i^1$ be witness sets to the failure of~$\epsilon$-regularity and~$V_i^2$ their complements, so that~$|V_i^1| \geq \epsilon |V_i|$ but~$|d(V_1^1,V_2^1,V_3^1) - d(V_1,V_2,V_3)| \geq \epsilon$.
			Let~$Z$ be the random variable defined above on the new partition obtained of~$V_1,V_2,V_3$, where~$\cB_i = V_i^1 \dcup V_i^2$. Then Chebyshev's inequality shows
			$$\epsilon^3 \leq \frac{|V_1^1||V_2^1||V_3^1|}{|V_1||V_2||V_3|}\leq \Pr( |Z-\bbE(Z)| \geq \epsilon ) \leq \frac{\Var(Z)}{\epsilon^2}$$
			and we are done, having computed~$\Var(Z)$ above.
	\end{proof}

        The furthermore part of the previous lemma shows that irregular triples can be refined to increase the value of~$q$ by a small amount. Next we will show that if~$\cA$ has many irregular triples, by refining each we can increment~$q$ by a small constant depending only on~$\epsilon$, while still controlling the order of the partitions we create.
	
	\begin{lemma}\label{lem:irregular_incr}
		Suppose~$\cA$ is an equipartition of~$C(Q)$ into~$V_0 \dcup V_1 \dots \dcup V_t$ and~$t \geq 6$. If there are~$\epsilon t^3$ many $(i_1,i_2,i_3) \in [t]^3$ with the ordered triple~$(V_{i_1},V_{i_2},V_{i_3})$ failing to be~$\epsilon$-regular and~$|V_0| \leq \epsilon n$,then there exists a refinement~$\cB=U_0 \dcup U_1 \dots \dcup U_\ell$ of~$\cA$ with~$q(\cB) \geq q(\cA) + \frac{\epsilon^6}{16}$,~$|U_0| \leq |V_0| + \frac{n}{2^t}$ and~$\ell \leq 2t 2^{3t^2}2^t $.
	\end{lemma}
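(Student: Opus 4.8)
The plan is to carry out the standard energy-increment argument, now for palettes, using both parts of Lemma~\ref{lem:q_props}. Fix $\cA = V_0 \dcup V_1 \dcup \dots \dcup V_t$ as in the hypothesis. For every ordered triple $(i_1,i_2,i_3)\in[t]^3$ for which $(V_{i_1},V_{i_2},V_{i_3})$ fails to be $\epsilon$-regular, the ``furthermore'' part of Lemma~\ref{lem:q_props} yields bipartitions $V_{i_r} = V_{i_r}^{1}\dcup V_{i_r}^{2}$ ($r\in[3]$) whose product gains energy $\epsilon^5 |V_{i_1}||V_{i_2}||V_{i_3}|/n^3$ on that triple. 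For each $i\in[t]$ there are at most $3t^2$ pairs consisting of an irregular triple together with a coordinate position at which it equals $i$, so forming, inside each $V_i$, the common refinement of all the bipartitions produced this way gives a partition $\cB'$ of $C(Q)$ which refines $\cA$, leaves $V_0$ untouched, and splits each $V_i$ into at most $2^{3t^2}$ parts; in particular $\cB'$ has at most $t\,2^{3t^2}$ non-exceptional parts.

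Next I would estimate the energy gain of $\cB'$ over $\cA$. Write $\cB'_i$ for the partition of $V_i$ induced by $\cB'$ for $i\in[t]$, and $\cB'_0$ for the family of singletons of $V_0$. Then $\cB'_i \prec \{V_i\}$ for each $i\in\{0,1,\dots,t\}$, and by construction $\cB'_i$ refines every bipartition of $V_i$ coming from an irregular triple. Expanding $q(\cB') = \sum_{a,b,c\in\{0,\dots,t\}} q(\cB'_a,\cB'_b,\cB'_c)$ and $q(\cA) = \sum_{a,b,c\in\{0,\dots,t\}} q(V_a,V_b,V_c)$, applying the monotonicity part of Lemma~\ref{lem:q_props} to each summand, and using the sharper bound on precisely the $\epsilon t^3$ summands indexed by irregular triples, one gets
\begin{align*}
q(\cB') \;\ge\; q(\cA) + \epsilon t^3\cdot\epsilon^5\min_{i\in[t]}\frac{|V_i|^3}{n^3} \;\ge\; q(\cA) + \frac{\epsilon^6}{8}\,,
\end{align*}
where the last inequality uses $|V_i|\ge n/(2t)$, which holds once $n$ is large in terms of $t$ because $|V_0|\le\epsilon n$ (one may assume $\epsilon$ is small, as this suffices for the intended application).

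It remains to replace $\cB'$ by an equipartition without losing energy. Chop every non-exceptional part of $\cB'$ into blocks of size exactly $m_0:=\lfloor n/(t\,2^{3t^2}\,2^t)\rfloor$, moving the fewer than $m_0$ leftover vertices of each part into the exceptional class. Since $\cB'$ has at most $t\,2^{3t^2}$ non-exceptional parts, the new exceptional class $U_0$ satisfies $|U_0|\le|V_0| + t\,2^{3t^2}m_0 \le |V_0| + n/2^t$, and the number $\ell$ of blocks satisfies $\ell\le n/m_0 \le 2t\,2^{3t^2}\,2^t$ for $n$ large. Call the resulting equipartition $\cB$. Then $\cB\prec\cB'\prec\cA$, so $\cB$ refines $\cA$, and by the monotonicity part of Lemma~\ref{lem:q_props} we have $q(\cB)\ge q(\cB')\ge q(\cA)+\epsilon^6/8 \ge q(\cA)+\epsilon^6/16$, as required.

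The only delicate points are bookkeeping ones: one must check that a single common refinement inside each $V_i$ simultaneously realises the energy gain of every irregular triple — so it matters to keep track of which bipartition is attached to which coordinate role, including the degenerate cases where a triple repeats a coordinate — and one must check that absorbing leftover vertices into $U_0$ still yields a refinement in the sense of the appendix (this is precisely why the condition $V_0\subseteq U_0$ is built into $\prec$). There is no genuine conceptual obstacle: this is the palette transcription of the classical Szemerédi energy-increment step.
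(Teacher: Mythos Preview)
Your proof is correct and follows essentially the same energy-increment argument as the paper. The only cosmetic difference is that the paper discards the at most $3t^{2}$ irregular triples with a repeated index (this is where the hypothesis $t\ge 6$ enters, giving at least $\tfrac{\epsilon}{2}t^{3}$ usable triples and hence the constant $\epsilon^{6}/16$), whereas you keep all irregular triples and observe that the common refinement inside each $V_i$ still realises the gain even when a coordinate repeats; this yields your slightly better $\epsilon^{6}/8$ before you relax to $\epsilon^{6}/16$.
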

	\begin{proof}
        Any time that~$V_i$ appears as part of an irregular triple (in first, second, or third position) with distinct indices, we will apply Lemma~\ref{lem:q_props} to partition~$V_i$ into~$V_i^1$ and~$V_i^2$. Formally, for every~$i \in [t]$ and $(i_1,i_2,i_3) \in [t]^3$ with $i \in \{i_1,i_2,i_3\}$ and~$i_1,i_2,i_3$ all distinct, we define a partition~$\cV^{(i)}_{(i_1,i_2,i_3)}$ of~$V_{i}$. If~$(V_{i_1},V_{i_2},V_{i_3})$ is~$\epsilon$-regular we let~$\cV^{(i)}_{(i_1,i_2,i_3)}$ be the trivial partition consisting of the single set~$V_i$, and if~$(V_{i_1},V_{i_2},V_{i_3})$ is not~$\epsilon$-regular we let~$\cV^{(i)}_{(i_1,i_2,i_3)}$ be the partition furnished by Lemma~\ref{lem:q_props}. Each~$\cV^{(i)}_{(i_1,i_2,i_3)}$ consists of either~$1$ or~$2$ sets, so for fixed~$i$ the mutual refinement consists of at most~$2^{3t^2}$ parts. Let~$\tilde{\cB}$ be the partition obtained by mutually refining each~$V_i$ in this manner, so that (since there at least~$\epsilon t^3$ irregular triples and at most~$\epsilon 3 t^2 \leq \frac{\epsilon}{2}t^3$ of them have a repeated index)
		$$q(\tilde{\cB}) \geq q(\cA) + \frac{\epsilon}{2} t^3 \epsilon^5\frac{1}{8t^3}\,.$$
		
		At this point~$\tilde{\cB}$ has incremented~$q$ as desired; all that remains is to balance the sizes of the member sets.~$\tilde{\cB}$ has at most~$t 2^{3t^2}$ parts, say~$\tilde{U}_j$, together with the exceptional set~$V_0$ remaining from~$\cA$. Let~$\cB$ be obtained from~$\tilde{\cB}$ by taking, from each~$\tilde{U}_j$, a maximal disjoint family of sets of size~$\frac{n}{t2^{3t^2}2^t}$, say~$U_{j,k}$ for~$0 \leq k \leq K_j$, and adding all of the leftover vertices to~$V_0$ to form~$U_0$. Formally,$$U_0 = V_0 \dcup \left(\bigcup_j \tilde{U}_j \setminus \left( \bigcup_{1 \leq k \leq K_j} U_{j,k}\right) \right)\,.$$ The first part of Lemma~\ref{lem:q_props} gives~$q(\cB) \geq q(\tilde{\cB}) \geq q(\cA)+ \frac{\epsilon^6}{16}$. Overcounting the number of vertices discarded gives $|U_0| \leq |V_0| + \frac{n}{2^t}$, and flattening pairs~$(j,k)$ to a single index~$\ell$ gives an equipartition~$\cB$ with at most~$2t 2^{3t^2}2^t $members, as needed.
	\end{proof}

        Finally, the following lemma will be used to redistribute the exceptional set~$V_0$ amongst the other classes of the equipartition without destroying regularity.

        \begin{lemma}\label{lem:regular_superset}
		For all~$\epsilon >0$ there exists~$\gamma >0$ so that the following holds. Suppose that~$(V_1,V_2,V_3)$ is~$\epsilon$-regular in~$C(Q)$ and that~$X_1,X_2,X_3 \subseteq C(Q)$ with~$|X_i| \leq \gamma |V_i|$. Then~$(V_1 \cup X_1,V_2 \cup X_2, V_3 \cup X_3)$ is~$2\epsilon$-regular.
	\end{lemma}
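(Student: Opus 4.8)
The plan is to set $V_i' := V_i \cup X_i$ for $i \in [3]$ and to deduce the $2\epsilon$-regularity of $(V_1', V_2', V_3')$ from the $\epsilon$-regularity of $(V_1, V_2, V_3)$ via a simple density-perturbation estimate. The estimate I would isolate first is the following elementary fact: if $A_i \subseteq B_i$ are finite nonempty color sets with $|B_i| \leq (1+\theta)|A_i|$ for each $i \in [3]$, then
\begin{align*}
  |d(A_1,A_2,A_3) - d(B_1,B_2,B_3)| \leq 1 - (1+\theta)^{-3} =: \delta(\theta)\,.
\end{align*}
This holds because $e(A_1,A_2,A_3) \leq e(B_1,B_2,B_3) \leq e(A_1,A_2,A_3) + |B_1||B_2||B_3| - |A_1||A_2||A_3|$ (the extra patterns each have a coordinate outside some $A_i$), together with $|B_1||B_2||B_3| \leq (1+\theta)^3 |A_1||A_2||A_3|$ and $0 \leq d \leq 1$; note that $\delta(\theta) \to 0$ as $\theta \to 0^+$.

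Given $\epsilon > 0$, I would then choose $\gamma > 0$ small enough that $\gamma \leq \epsilon$ and $\delta(\gamma) + \delta(\gamma/\epsilon) \leq \epsilon$, which is possible since both summands tend to $0$ as $\gamma \to 0^+$. Now fix $X_i$ with $|X_i| \leq \gamma|V_i|$ and set $V_i' = V_i \cup X_i$, so that $|V_i| \leq |V_i'| \leq (1+\gamma)|V_i|$; applying the estimate with $A_i = V_i$, $B_i = V_i'$ gives $|d(V_1',V_2',V_3') - d(V_1,V_2,V_3)| \leq \delta(\gamma)$. Take an arbitrary triple $W_i \subseteq V_i'$ with $|W_i| \geq 2\epsilon|V_i'|$ and put $W_i^0 := W_i \cap V_i$. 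Then $|W_i^0| \geq |W_i| - |X_i| \geq 2\epsilon|V_i| - \gamma|V_i| \geq \epsilon|V_i|$, so the $\epsilon$-regularity of $(V_1,V_2,V_3)$ yields $|d(W_1^0,W_2^0,W_3^0) - d(V_1,V_2,V_3)| \leq \epsilon$. Moreover $|W_i| \leq |W_i^0| + |X_i| \leq |W_i^0| + \gamma|V_i| \leq (1 + \gamma/\epsilon)|W_i^0|$, so the estimate applied with $A_i = W_i^0$, $B_i = W_i$ gives $|d(W_1,W_2,W_3) - d(W_1^0,W_2^0,W_3^0)| \leq \delta(\gamma/\epsilon)$. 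Chaining the three bounds via the triangle inequality,
\begin{align*}
  |d(W_1,W_2,W_3) - d(V_1',V_2',V_3')| \leq \delta(\gamma/\epsilon) + \epsilon + \delta(\gamma) \leq 2\epsilon\,,
\end{align*}
and since $W_i \subseteq V_i'$ with $|W_i| \geq 2\epsilon|V_i'|$ were arbitrary, $(V_1',V_2',V_3')$ is $2\epsilon$-regular.

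There is no genuinely hard step here — the argument is bookkeeping. The only point to watch is that restricting a test triple $W_i \subseteq V_i'$ to $W_i^0 = W_i \cap V_i \subseteq V_i$ loses a bit of mass, which is exactly why the hypothesis asks $|W_i| \geq 2\epsilon|V_i'|$ rather than something like $(1+o(1))\epsilon|V_i'|$: the slack between $\epsilon$ and $2\epsilon$ in the conclusion absorbs both this loss and the error terms $\delta(\gamma)$ and $\delta(\gamma/\epsilon)$ coming from the size discrepancies $|V_i'|/|V_i|$ and $|W_i|/|W_i^0|$.
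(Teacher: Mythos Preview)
Your proof is correct and follows essentially the same route as the paper's: intersect each test set $W_i$ with $V_i$ to get $W_i^0$ large enough for the $\epsilon$-regularity hypothesis, then use a density-perturbation bound twice (for $V_i \subseteq V_i'$ and for $W_i^0 \subseteq W_i$) and chain by the triangle inequality. The only cosmetic difference is that you package the perturbation bound as a single clean estimate $\delta(\theta) = 1 - (1+\theta)^{-3}$, whereas the paper writes out the two-sided inequalities $\frac{1}{(1+\gamma)^3}d(V_1,V_2,V_3) \leq d(V_1',V_2',V_3') \leq d(V_1,V_2,V_3) + 3\gamma$ (and the analogue with $\gamma/\epsilon$) explicitly before passing to the limit in $\gamma$.
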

	\begin{proof}
		We may assume that~$\gamma < \epsilon$; we will verify directly that~$(V_1 \cup X_1,V_2 \cup X_2, V_3 \cup X_3)$ is~$2\epsilon$-regular. Suppose that~$W_i \subseteq (V_i \cup X_i)$ with~$|W_i| \geq 2\epsilon |V_i \cup X_i|$, and set~$W_i^V := W_i \cap V_i$ with leftovers~$W_i^X := W_i \setminus W_i^V$. Then ~$|W_i^V| \geq |W_i| - \gamma |V_i| \geq \epsilon |V_i|$, so by the~$\epsilon$-regularity of~$(V_1,V_2,V_3)$ it follows that$$|d(W_1^V,W_2^V,W_3^V) -d(V_1,V_2,V_3)| \leq \epsilon\,.$$ 
		Next observe the simple subset
		\begin{align*}
			e(V_1,V_2,V_3) \leq e(V_1 \cup X_1,V_2 \cup X_2, V_3 \cup X_3)
		\end{align*}
		and union bounds
		\begin{align*}
			e(V_1 \cup X_1,V_2 \cup X_2, V_3 \cup X_3)\leq e(V_1,V_2,V_3) &+ |X_1||V_2 \cup X_2||V_3\cup X_3|\\&+|V_1 \cup X_1||X_2||V_3\cup X_3|\\&+ |V_1 \cup X_1||V_2 \cup X_2||X_3|\,.
		\end{align*}
		Dividing through by~$|V_1 \cup X_1||V_2 \cup X_2||V_3 \cup X_3|$ yields $$\frac{1}{(1+\gamma)^3}d(V_1,V_2,V_3) \leq d(V_1 \cup X_1, V_2 \cup X_2, V_3 \cup X_3) \leq d(V_1,V_2,V_3)+ 3 \gamma$$
		and repeating the same argument with~$W_i^V$ in place of~$V_i$ and~$W_i^X$ in place of $X_i$ gives$$\frac{1}{(1+\frac{\gamma}{\epsilon})^3}d(W_1^V,W_2^V,W_3^V)\leq d(W_1,W_2,W_3) \leq d(W_1^V,W_2^V,W_3^V)+3\frac{\gamma}{\epsilon}\,.$$
		Then for~$\gamma$ taken small enough as a function of~$\epsilon$, it follows that
		$$|d(V_1,V_2,V_3)-d(V_1 \cup X_1, V_2 \cup X_2, V_3 \cup X_3) | \leq \epsilon/2 $$
		and
		$$|d(W_1,W_2,W_3) - d(W_1^V,W_2^V,W_3^V)  | \leq \epsilon/2 \,.$$
		Finally the triangle inequality shows$$|d(W_1,W_2,W_3) - d(V_1 \cup X_1, V_2 \cup X_2, V_3 \cup X_3) | \leq 2 \epsilon$$ as needed.
	\end{proof}

	\begin{proof}[Proof of Theorem \ref{thm:palette_regularity}]

		We directly prove the `more generally' part of the Theorem, which implies the first part by taking an arbitrary equipartition of~$m$ parts. Let~$\gamma$ be the result of Lemma~\ref{lem:regular_superset} applied to~$\epsilon$. By increasing the value of~$m$ we may assume that~$\ceil{\frac{16}{\epsilon^6}}\frac{1}{2^m} \leq \min{(\epsilon,\gamma/4)}$ and~$m \geq 6$ - this suffices for the general case. Inductively define a sequence~$t_0 = 2m$ and~$t_{i+1} = 2t_i 2^{3t_i^2}2^{t_i}$. We will show that taking~$M =N= t_{\ceil{\frac{16}{\epsilon^6}}}$ suffices.
		
		Indeed, let~$\cA^0$ be the initial equipartition with~$s_0=m$ parts and~$V_0^0=\emptyset$. Iterate the following process, beginning with~$i=0$. If~$\cA^i$ is not~$\epsilon$-regular, apply Lemma \ref{lem:irregular_incr} to obtain a refinement~$\cA^{i+1}$ with~$q(\cA^{i+1}) \geq q(\cA^i) + \frac{\epsilon^6}{16}$, so that~$|V^{i+1}_0| \leq |V^i_0| + \frac{n}{2^{s_i}}$ and~$\cA^{i+1}$ has~$s_{i+1} \leq t_{i+1}$ parts (since the inductive definition of~$t_i$ matches the output of Lemma~\ref{lem:irregular_incr}). Since~$q(\cdot{}) \in [0,1]$, it follows that in~$\ceil{\frac{16}{\epsilon^6}}$ steps we must find some~$\cA^j$ which is~$\epsilon$-regular, with~$s\leq M$ parts, at which point our iterative process halts.
        
        We can estimate the size of the exceptional set as
        $$|V_0^j| \leq n\left(\ceil{\frac{16}{\epsilon^6}}\frac{1}{2^m}\right)\leq \epsilon n \,.$$
        All that remains is to redistribute~$V_0^j$ amongst the other parts of the equipartition. By evenly distributing the vertices of~$V_0^j$ no part will receive more than~$2\frac{|V_0^j|}{s}\leq \gamma $ many vertices with~$2\frac{|V_0^j|/s}{|V_i|}\leq \gamma$ so that Lemma~\ref{lem:regular_superset} guarantees the~$2 \epsilon$-regularity of our equipartition. Applying the above proof to~$\epsilon'=\frac{\epsilon}{2}$ would provide~$\epsilon$-regularity; we leave the proof as written for readability.
	\end{proof}

        Next we show, as done in~\cite{AFKS:00}, how to iterate Theorem~\ref{thm:palette_regularity} and then randomize to obtain Corollary~\ref{cor:iterated_palette_regularity_app}.
        To do so we will require another property of~$q$. If~$\cB \prec \cA$ it follows from Lemma~\ref{lem:q_props} that~$q(\cB) \geq q(\cA)$; we will show that if~$q(\cB)$ remains very close to~$q(\cA)$, then most of the densities~$d$ found across parts of~$\cB$ are very close to those found in~$\cA$.
	
	\begin{lemma}\label{lem:q_close}
		Suppose equipartitions~$\cA = \{ V_i : i \in [t]\}$ and refinement~$\cB = \{ V_{i,j}: i \in [k],j \in [\ell]\}$ have~$q(\cB) - q(\cA) \leq \epsilon^4/64$. Then for all but~$\epsilon t^3$~$(i_1,i_2,i_3)\in [t]^3$, all but~$\epsilon \ell^3$~$(j_1,j_2,j_3) \in [\ell]^3$ have~$|d(V_{i_1},V_{i_2},V_{i_3})-d(V_{i_1,j_1},V_{i_2,j_2},V_{i_3,j_3})| \leq \epsilon$.
	\end{lemma}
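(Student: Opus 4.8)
The plan is to run the standard defect form of the Cauchy--Schwarz (variance) argument, reusing verbatim the probabilistic computation from the proof of Lemma~\ref{lem:q_props}. First I would decompose the energy increment along the refinement. Writing $\cB_i=\{V_{i,1},\dots,V_{i,\ell}\}$ for the partition of $V_i$ induced by $\cB$, one has
\begin{align*}
q(\cB)-q(\cA)=\sum_{(i_1,i_2,i_3)\in[t]^3}\Bigl(q(\cB_{i_1},\cB_{i_2},\cB_{i_3})-q(V_{i_1},V_{i_2},V_{i_3})\Bigr)\,,
\end{align*}
and the variance identity established in the first part of Lemma~\ref{lem:q_props} gives, for each triple,
\begin{align*}
q(\cB_{i_1},\cB_{i_2},\cB_{i_3})-q(V_{i_1},V_{i_2},V_{i_3})=\frac{|V_{i_1}||V_{i_2}||V_{i_3}|}{n^3}\,\Var\bigl(Z_{i_1,i_2,i_3}\bigr)\,,
\end{align*}
where $Z_{i_1,i_2,i_3}=d(V_{i_1,J_1},V_{i_2,J_2},V_{i_3,J_3})$ is the random density obtained by picking a uniform vertex in each of $V_{i_1},V_{i_2},V_{i_3}$ and recording which sub-part $J_a$ it lands in; crucially $\bbE(Z_{i_1,i_2,i_3})=d(V_{i_1},V_{i_2},V_{i_3})$, exactly as computed there.

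Since $\cA$ is an equipartition we have $|V_i|\ge n/t-1$, so the hypothesis $q(\cB)-q(\cA)\le\epsilon^4/64$ yields $\sum_{(i_1,i_2,i_3)}\Var(Z_{i_1,i_2,i_3})\le(1+o(1))\,\epsilon^4t^3/64$. Then by Markov's inequality, all but at most $\epsilon t^3$ of the triples $(i_1,i_2,i_3)\in[t]^3$ satisfy $\Var(Z_{i_1,i_2,i_3})\le\epsilon^3/32$; call these triples \emph{good}. For a good triple, Chebyshev's inequality gives $\Pr\bigl(|Z_{i_1,i_2,i_3}-d(V_{i_1},V_{i_2},V_{i_3})|>\epsilon\bigr)\le\Var(Z_{i_1,i_2,i_3})/\epsilon^2\le\epsilon/32$. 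Because $\cB$ is an equipartition refining $\cA$ into $\ell$ parts per class, all $|V_{i,j}|$ differ by at most $1$, so $Z_{i_1,i_2,i_3}$ takes the value $d(V_{i_1,j_1},V_{i_2,j_2},V_{i_3,j_3})$ with probability $(1+o(1))/\ell^3$ for each $(j_1,j_2,j_3)\in[\ell]^3$. Hence the number of $(j_1,j_2,j_3)$ with $|d(V_{i_1,j_1},V_{i_2,j_2},V_{i_3,j_3})-d(V_{i_1},V_{i_2},V_{i_3})|>\epsilon$ is at most $(1+o(1))\,\epsilon\ell^3/32\le\epsilon\ell^3$, which is the assertion.

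The argument is essentially bookkeeping, so I do not expect a genuine obstacle; the only thing to watch is the error introduced by the parts not being exactly equal in size. Since this lemma is only applied when the number of colours (hence the common order of all partition classes) is large, the $(1+o(1))$ factors are harmless, and the slack between $\epsilon^4/64$ in the hypothesis and the thresholds $\epsilon^3/32$, $\epsilon/32$ used in the Markov and Chebyshev steps absorbs them comfortably. If one prefers to avoid $o(1)$ language entirely, one can instead prove the statement with $\epsilon$ replaced by $\epsilon/2$ in the conclusion and then inflate, exactly as is done at the end of the proof of Theorem~\ref{thm:palette_regularity}. The entire content of the lemma is thus the variance identity inherited from Lemma~\ref{lem:q_props}.
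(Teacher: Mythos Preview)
Your proposal is correct and follows essentially the same route as the paper: decompose $q(\cB)-q(\cA)$ into per-triple variance contributions via the identity from Lemma~\ref{lem:q_props}, then use Chebyshev on each triple together with the equipartition bounds $|V_i|\approx n/t$, $|V_{i,j}|\approx n/(t\ell)$. The only cosmetic difference is that the paper argues by contradiction (assume more than $\epsilon t^3$ bad triples and derive $q(\cB)-q(\cA)>\epsilon^4/64$), whereas you phrase the outer step as a direct Markov bound; the arithmetic and constants are identical.
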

	\begin{proof}
		First fix~$(i_1,i_2,i_3) \in [t]^3$ and let~$Z$ be the random variable defined in Lemma~\ref{lem:q_props}, over the refinement given by~$\cB$ of the sets~$V_{i_1},V_{i_2},V_{i_3}$. If~$L^{(i_1,i_2,i_3)} \subseteq [\ell]^3$ are those~$(j_1,j_2,j_3) \in [\ell]^3$ with~$|d(V_{i_1},V_{i_2},V_{i_3})-d(V_{i_1,j_1},V_{i_2,j_2},V_{i_3,j_3})| \geq \epsilon$, and~$|L^{(i_1,i_2,i_3)}| \geq \epsilon \ell^3$, then
		$$\epsilon \ell^3 \left(\frac{1}{2\ell}\right)^3 \leq \sum_{(j_1,j_2,j_3) \in L^{(i_1,i_2,i_3)}}\frac{|V_{i_1,j_1}||V_{i_2,j_3}||V_{i_3,j_3}|}{|V_{i_1}||V_{i_2}||V_{i_3}|}\leq \Pr( |Z-\bbE(Z)| \geq \epsilon ) \leq \frac{\Var(Z)}{\epsilon^2}$$
		and therefore, recalling we calculated~$\Var(Z)$ in Lemma~\ref{lem:q_props},
		$$q(\bigdcup_{j_1 \in [\ell]} V_{i_1,j_1},\bigdcup_{j_2 \in [\ell]} V_{i_2,j_2},\bigdcup_{j_3 \in [\ell]}  V_{i_3,j_3})-q(V_{i_1},V_{i_2},V_{i_3}) \geq \frac{\epsilon^3}{8}\frac{|V_{i_1}||V_{i_2}||V_{i_3}|}{n^3}\,.$$
		Next define~$I \subseteq [t]^3$ as those~$(i_1,i_2,i_3) \in t^3$ for which~$|L^{(i_1,i_2,i_3)}| \geq \epsilon \ell^3$ and suppose for contradiction that~$|I| > \epsilon t^3$. Then
		\begin{align*}q(\cB) - q(\cA) &\geq  \sum_{(i_1,i_2,i_3) \in I}q(\bigdcup_{j_1 \in [\ell]} V_{i_1,j_1},\bigdcup_{j_2 \in [\ell]} V_{i_2,j_2},\bigdcup_{j_3 \in [\ell]}  V_{i_3,j_3})-q(V_{i_1},V_{i_2},V_{i_3})\\ 
        &> \epsilon t^3 \frac{\epsilon^3}{8}\frac{|V_{i_1}||V_{i_2}||V_{i_3}|}{n^3} \geq \frac{\epsilon^4}{64} \end{align*}
        a contradiction.
	\end{proof}

        We can now iterate~\ref{thm:palette_regularity} to prove Theorem~\ref{thm:iterated_palette_regularity}.
        
	\begin{proof}[Proof of Theorem \ref{thm:iterated_palette_regularity}]
		Given any~$m \in \NN$ and~$\epsilon >0$, let~$M_{\ref{thm:palette_regularity}}(m,\epsilon)$ and~$N_{\ref{thm:palette_regularity}}(m,\epsilon)$ denote the output of Theorem~$\ref{thm:palette_regularity}$. Let~$\cE$ and~$m$ be as in the Theorem statement, and let~$\epsilon = \cE(0)$. Set~$M_0=M_{\ref{thm:palette_regularity}}(m,\epsilon)$ and~$N_0=N_{\ref{thm:palette_regularity}}(m,\epsilon)$ before inductively defining
		\begin{align*}
			M_i &= M_{\ref{thm:palette_regularity}}\left(M_{i-1},\frac{\cE(M_{i-1})}{M_{i-1}^3} \right)\\
			N_i &= N_{\ref{thm:palette_regularity}}\left(M_{i-1},\frac{\cE(M_{i-1})}{M_{i-1}^3} \right)\,.\\
		\end{align*}
		If we set~$s= \ceil{\frac{64}{\epsilon^4}}+1$, then we claim that~$M = M_{s},N = N_{s}$ suffices.
        
        First let~$\cA^0$ be an~$\epsilon$-regular equipartition of order~$t_0 \in [m,M_0]$, provided by Theorem~\ref{thm:palette_regularity} and then perform the following iterative procedure, starting with~$i=0$ (where~$t_{-1}=0$ for convenience).
        Given~$\cA^i$ with order~$t_i \in [t_{i-1},M_i]$ we may apply  Theorem~\ref{thm:palette_regularity} again to obtain a refinement~$\cA^{i+1}$ which is~$\frac{\cE(M_{i})}{M_{i}^3}$-regular with order~$t_{i+1} \in [t_i,M_{i+1}]$. In particular, at most~$\frac{\cE(M_{i-1})}{M_{i-1}^3} t_i^3$ of~$(i_1,i_2,i_3) \in [t_i]^3$ have~$(V_{i_1},V_{i_2},V_{i_3})$ fail to be~$\cE(M_{i-1})$-regular, since~$\frac{\cE(M_{i-1})}{M_{i-1}^3} \leq \cE(M_{i-1})$. 
        Let~$i$ be the first~$i$ so that~$q(\cA^i) - q(\cA^{i-1}) \leq \frac{\epsilon^4}{64}$, and set~$\cA = \cA_{i-1}$ and~$\cB = \cA_{i}$, with~$t = t_{i-1}$ and~$t\ell = t_{i}$. It remains to check items~\ref{it:part_count}-\ref{it:palette-reg-density}.

        Part~\ref{it:part_count} follows immediately by recalling that the~$t_i$ are increasing, so ~$m \leq t_0 \leq t \leq t \ell \leq M$.
        Part~\ref{it:eps0} also follows immediately using the monotonicity of~$\cE$, since all but~$(\frac{\cE(M_{i-2})}{M_{i-2}^3})[t_{i-1}]^3 \leq \cE(0)[t_{i-1}]^3$ of~$(i_1,i_2,i_3) \in [t_{i-1}]^3$ have~$(V_{i_1}V_{i_2},V_{i_3})$ as~$\cE(0)$-regular.
        For Part~\ref{it:epst}, we have that the partition~$\cB$ has at most~$\frac{\cE(M_{i-1})}{M_{i-1}^3} (t \ell)^3$ pairs~$\left((i_1,i_2,i_3),(j_1,j_2,j_3) \right) \in ([t]^3,[\ell]^3)$ with~$(V_{i_1,j_1},V_{i_2,j_2},V_{i_3,j_3})$ failing to be~$\cE(M_{i-1})$-regular.
        Since~$\frac{\cE(M_{i-1})}{M_{i-1}^3} (t \ell)^3 \leq \cE(M_{i-1}) \ell^3$, there is certainly no~$(i_1,i_2,i_3) \in [t]^3$ with more than~$\cE(t)\ell^3$ many~$(j_1,j_2,j_3) \in [\ell]^3$ with~$(V_{i_1,j_1},V_{i_2,j_2},V_{i_3,j_3})$ failing to be~$\cE(M_{t})$-regular.
        Finally, Part~\ref{it:palette-reg-density} follows by direct application of Lemma~\ref{lem:q_close}.
		
	\end{proof}

        Finally we may sample from the refinement~$\cB$ of~$\cA$ to find the model vertex sets we require. 
	
	\begin{proof}[Proof of Corollary \ref{cor:iterated_palette_regularity_app}]
		We apply Theorem~\ref{thm:iterated_palette_regularity} with~$\cE'(r) = \min\{\cE(r),\frac{1}{4t^3},\frac{\epsilon}{8}\}$ and~$m$ to obtain~$M_{\ref{thm:iterated_palette_regularity}}$ and~$N_{\ref{thm:iterated_palette_regularity}}$ and claim that ~$M=M_{\ref{thm:iterated_palette_regularity}}$,~$N=N_{\ref{thm:iterated_palette_regularity}}$, and~$\delta = \frac{1}{2M}$ suffices. To that end, let~$Q$ be any palette with at least~$N$ colors, so that Theorem~\ref{thm:iterated_palette_regularity} gives~$\cB$ refining~$\cA$ so that Parts~\ref{it:part_count}-\ref{it:palette-reg-density} hold. For each~$i \in [t] $ select~$j_i \in [\ell]$, independently uniformly at random, and set~$U_i = V_{i,j_i}$. We now show that parts~\ref{it:num_parts}-\ref{it:model_accuracy} are satisfied with positive probability. Parts~\ref{it:num_parts} and~\ref{it:model_size} are immediate and always hold. For Part~\ref{it:model_regularity},
        \begin{align*}
            \PP\left( (U_{i_1},U_{i_2},U_{i_3}) \text{ is not }\epsilon\text{-regular for some }(i_1,i_2,i_3) \in [t]^3 \right) \leq t^3\cE'(t) \leq \frac{1}{4}
        \end{align*}
        by applying the union bound and Part~\ref{it:epst} of Theorem ~\ref{thm:iterated_palette_regularity}. Meanwhile,
        \begin{align*}
            \EE (|(i_1,i_2,i_3) \in [t]^3 \text{ with }|d(U_{i_1},U_{i_2},U_{i_3}) - d(V_{i_1},V_{i_2},V_{i_3})  | \geq \epsilon  | ) \leq \frac{\epsilon}{8}t^3+\frac{\epsilon}{8}t^3
        \end{align*}
        by Part~\ref{it:palette-reg-density} of~\ref{thm:iterated_palette_regularity}, and therefore the probability that there are more than~$\epsilon t^3$~$(i_1,i_2,i_3) \in [t]^3$ with~$|d(U_{i_1},U_{i_2},U_{i_3}) - d(V_{i_1},V_{i_2},V_{i_3})  | \geq \epsilon$ cannot exceed~$\frac{1}{4}$. Then with probability at least~$\frac{1}{2}$ both Part~\ref{it:model_regularity} and~\ref{it:model_accuracy} are satisfied as well, so there exists such a choice of~$U_i$ and we are done.
	\end{proof}

\end{document}